\numberwithin{equation}{section}
\numberwithin{figure}{section}
\newcommand{\Hm}[1]{\leavevmode{\marginpar{\tiny%
$\hbox to 0mm{\hspace*{-0.5mm}$\leftarrow$\hss}%
\vcenter{\vrule depth 0.1mm height 0.1mm width \the\marginparwidth}%
\hbox to 0mm{\hss$\rightarrow$\hspace*{-0.5mm}}$\\\relax\raggedright
#1}}}
\newcommand{\blue}[1]{\textcolor{blue}{#1}}
\newcommand{\red}[1]{\textcolor{red}{#1}}
\newcommand{\adp}[1]{\textcolor{purple}{#1}}
\numberwithin{equation}{section}
\newtheorem{theo}{Theorem}[section]
\newtheorem{lem}[theo]{Lemma}\newtheorem{prop}[theo]{Proposition}\newtheorem{coro}[theo]{Corollary}\newtheorem{defi}[theo]{Definition}\newtheorem{remark}[theo]{Remark}
\def\E{{\mathbb{E}\ \!\!}}
\def\R{{\mathbb{R}\ \!\!}}
\def\L{{\mathcal{L}\ \!\!}}
\def\RR{{\mathbb{R}\ \!\!}}
\def\P{{\mathbb{P}\ \!\!}}
\def\one{\mathbf 1}
\def\bone{{\mathbf{1}}}
\newcommand{\EE}{\mathbb{E}}
\newcommand{\ve}{\varepsilon}
\def\Var{{\mathrm{{\rm Var}}}}
\def\Hess{{\mathrm{{\rm Hess }}}}
\def\Cov{{\mathrm{{\rm Cov}}}}
\def\cov{{\mathrm{{\rm Cov}}}}
\def\Var{{\mathrm{{\rm Var}}}}
\def\var{{\mathrm{{\rm Var}}}}
\def\and{{\mathrm{{\rm and}}}}
\def\ve{{\varepsilon\ \!\!}}
\def\eps{{\varepsilon\ \!\!}}
\title
{Covariance inequalities for convex and log-concave functions}
\author{Michel~Bonnefont} \address[M.~Bonnefont]{UMR CNRS 5251, Institut de Math\'ematiques de Bordeaux, Universit\'e Bordeaux, France}
\thanks{MB is partially supported by the French ANR-18-CE40-0012 RAGE project and the French ANR-19-CE40-0010-01 QuAMProcs project}
\email{\url{mailto:michel.bonnefont(at)math.u-bordeaux.fr}} \urladdr{\url{http://www.math.u-bordeaux.fr/~mibonnef/}}
\author{Erwan~Hillion} \address[E.~Hillion]{Aix Marseille Univ, CNRS, I2M, Marseille, France}
\email{\url{mailto:erwan.hillion(at)univ-amu.fr}}
\author{Adrien~Saumard} \address[A.~Saumard]{Univ. Rennes, ENSAI, CNRS, CREST - UMR 9194, F-35000 Rennes, France}
\email{\url{mailto:adrien.saumard@ensai.fr}}
\begin{document}
\maketitle

\begin{abstract}
Extending results of Hargé and Hu for the Gaussian measure, we prove inequalities for the covariance $\cov_\mu(f,g)$ where $\mu$ is a general product probability measure on $\R^d$ and $f,g : \R^d \to \R$ satisfy some convexity or  log-concavity assumptions, with possibly some symmetries.
\end{abstract}

\maketitle


\section{Introduction}
If $\mu$ is a probability measure on $\R^d$ and if $f,g \in L^2(d\mu)$ are two square integrable functions with respect to $\mu$, their covariance is defined by 
\begin{align*}
\Cov_{\mu}(f,g)&=   \int \left(f-\int f d\mu \right)\left(g-\int g d\mu\right) d \mu 
\end{align*}
and is a measure of the joint variability of the two functions.
Here and in all the sequel, we make the assumptions that $f$ and
$g$ have enough integrability and regularity conditions, so that
all the written quantities are well defined.

Lying at the intersection of probability, analysis and geometry, covariance identities and inequalities provide a  variety of tools. Without trying to be exhaustive, let us cite some of them: FKG inequalities (\cite{FKG}), (asymmetric) Brascamp-Lieb inequalities (\cite{otto-menz,carlen-cordero-lieb,abj}), Stein kernels (\cite{chatterjee:stein,nourdin-viens,ledoux-nourdin-peccati,Courtadeetal:19,Fathi:stein,saumard:wpi}), concentration inequalities (\cite{bobkov-gotze-houdre,HouPriv:02}, \cite[Section 5.5]{MR1849347}).

The proof techniques of these covariance identities and inequalities vary from semi-group techniques, other types of integration by parts, measure transportation or stochastic calculus. Gaussian measures offer a particularly fruitful framework in this perspective
and in  Theorem \ref{thm:harge} below, we recall famous covariance inequalities known for the standard  Gaussian measure.
The main point of this work is to discuss and extend partially these results  beyond the Gaussian assumption, to the  case of  general product measures.

\begin{theo}\label{thm:harge}Let $\gamma$
be the standard Gaussian distribution on $\R^{d}$.

\begin{enumerate}
\item \cite{hu-chaos,harge:04}  Let $f$ and $g$
be two convex functions on $\R^{d}$, then 
\begin{equation}
\cov_\gamma(f,g)  \geq \cov_\gamma (f,x)\cdot \cov_\gamma (g,x)\label{eq-harge-gaussienne}
\end{equation}
where $\cdot$ denotes the standard scalar product on $\R^{d}$.
\item \cite{harge:04} Let $f$ be a log-concave function and $g$ be a convex function. Assume moreover that $f$ is orthogonal to the linear functions -- that is $\cov_\gamma (f,x)=0$ --, then 
\begin{equation}
\cov_\gamma(f,g)  \leq 0 \label{eq-harge-gaussienne-niv2}.
\end{equation}
\item \cite{Royen} Let $f$  and  $g$ be some  quasi-concave functions that are both even, then
\begin{equation}
\cov_\gamma(f,g)  \geq 0 \label{eq-harge-gaussienne-niv3}.
\end{equation}

\end{enumerate} \end{theo}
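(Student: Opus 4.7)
The three statements share a common starting point, namely the Gaussian covariance representation via the Ornstein–Uhlenbeck semigroup $(P_t)_{t\geq 0}$,
\begin{equation*}
\cov_\gamma(f,g) \;=\; \int_0^\infty \int \nabla f \cdot \nabla P_t g \,d\gamma\,dt \;=\; \int_0^\infty e^{-t}\int \nabla f \cdot (P_t \nabla g)\,d\gamma\,dt,
\end{equation*}
which follows from writing $g - \int g\,d\gamma = \int_0^\infty (-L)P_t g\,dt$, integrating by parts against $f$ via the Gaussian Dirichlet form, and using the commutation $\nabla P_t = e^{-t}P_t \nabla$. Stein's identity $\cov_\gamma(f,x_i) = \E[\partial_i f]$ is also used throughout.

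For part (1), I would first reduce to the zero-mean-gradient case. Setting $\bar f := f - \E[\nabla f]\cdot x$ and $\bar g := g - \E[\nabla g]\cdot x$ — still convex as linear perturbations of convex functions — bilinearity of covariance together with $\cov_\gamma(a\cdot x, b\cdot x)= a\cdot b$ yields
\begin{equation*}
\cov_\gamma(f,g) - \cov_\gamma(f,x)\cdot\cov_\gamma(g,x) \;=\; \cov_\gamma(\bar f,\bar g),
\end{equation*}
so that it suffices to prove $\cov_\gamma(\bar f,\bar g)\geq 0$ in the case $\E[\nabla \bar f]=\E[\nabla \bar g]=0$. Plugging $\bar f$, $\bar g$ into the representation above and performing a second Gaussian integration by parts on $\nabla \bar g$ (substituting $P_t\nabla \bar g = e^t \nabla P_t \bar g$ and re-expressing in terms of $\Hess \bar g$), the integrand becomes a non-negative combination of quadratic forms in $\Hess \bar f, \Hess \bar g \succeq 0$; the orthogonality assumption is exactly what is needed to annihilate the otherwise indefinite zeroth-order contribution.

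For part (2), the same representation is used, with log-concavity of $f$ replacing convexity. The Prékopa–Leindler inequality ensures that $P_t f$ remains log-concave, and the tilted measure $d\nu := f\,d\gamma/\int f\,d\gamma$ is more log-concave than $\gamma$. Following Hargé's strategy, one combines an integration-by-parts identity, valid precisely under the orthogonality $\cov_\gamma(f,x)=0$, with the convexity of $g$ to show that the integrand in the semigroup representation acquires a definite negative sign. Part (3) is Royen's Gaussian correlation inequality: by the layer-cake formula $f = \int \mathbf{1}_{\{f\geq s\}}\,ds$ (up to an innocuous shift), whose level sets are origin-symmetric convex bodies under the even quasi-concavity assumption, the general claim reduces to $\gamma(A\cap B)\geq \gamma(A)\gamma(B)$ for centrally symmetric convex $A,B$.

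The main obstacle is part (3): no elementary semigroup proof is known, and one must invoke the full Gaussian correlation conjecture resolved by Royen through a delicate Laplace-transform interpolation between independent and fully correlated Gaussian factors. For parts (1) and (2), the technical challenge lies in controlling the manipulations under $P_t$ in dimension $d\geq 2$, in particular tracking off-diagonal cross terms in the Hessian calculus and verifying that the orthogonality conditions propagate correctly through the semigroup evolution.
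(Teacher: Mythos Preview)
This theorem is not proved in the paper: it is stated as background with citations, and the paper only \emph{describes} the original arguments (Hu via It\^o--Wiener chaos for (1), Harg\'e via Caffarelli's contraction theorem for (2), Royen's Laplace-transform interpolation for (3)). So there is no ``paper's own proof'' to match; I can only assess whether your sketch stands on its own and how it relates to what the paper says about the literature.

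\textbf{Part (1).} Your reduction to $\E[\nabla\bar f]=\E[\nabla\bar g]=0$ is correct and standard. The gap is the next step. Substituting $P_t\nabla\bar g=e^{t}\nabla P_t\bar g$ into $\int_0^\infty e^{-t}\int\nabla\bar f\cdot P_t\nabla\bar g\,d\gamma\,dt$ and integrating by parts once just gives back $\cov_\gamma(\bar f,\bar g)$; it is circular and does not produce Hessians. What actually works is to \emph{iterate the covariance representation}: under $\E[\partial_i\bar f]=0$ one has $\int\partial_i\bar f\cdot P_t\partial_i\bar g\,d\gamma=\cov_\gamma(\partial_i\bar f,P_t\partial_i\bar g)$, and applying the representation a second time (together with $\nabla P_t=e^{-t}P_t\nabla$) yields
\[
\cov_\gamma(\bar f,\bar g)=\iint_{(0,\infty)^2} e^{-2t-s}\,\E\!\big[\mathrm{Tr}\big(\Hess\bar f(X)\,\Hess\bar g(Y_{s+t})\big)\big]\,ds\,dt\;\ge 0,
\]
since $\mathrm{Tr}(AB)\ge 0$ for $A,B\succeq 0$. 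This second-order representation is precisely what the paper alludes to in Section~\ref{sec:comments} as the semigroup route to Theorem~\ref{thm:harge}(1), and is the analytic counterpart of Hu's chaos argument. Your intuition (``orthogonality annihilates the zeroth-order contribution'') is right; the mechanism you wrote down is not.

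\textbf{Part (2).} Your sketch is too vague to be a proof, and the phrase ``following Harg\'e's strategy'' is misplaced: as the paper explicitly says, Harg\'e's proof of (2) goes through optimal transport and Caffarelli's contraction theorem, not through the Ornstein--Uhlenbeck semigroup. Pr\'ekopa does give that $P_tf$ stays log-concave, and the tilted measure is indeed uniformly more convex than $\gamma$, but you have not indicated any identity under which these facts combine with the orthogonality $\E[\nabla f]=0$ to force a sign on the semigroup integrand. This is the genuine missing idea.

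\textbf{Part (3).} You correctly reduce to symmetric convex bodies via the layer-cake formula and defer to Royen; this matches what the paper says.
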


The first point of Theorem \ref{thm:harge} is due to Hu \cite{hu-chaos} and was recovered by Hargé
\cite{harge:04}. Hu's proof is based on some Itô-Wiener chaos decomposition. This decomposition is based on the interpolation of the covariance  by the standard heat semi-group.
Hargé's proof of the second point is based on optimal transport theory and Caffarelli's contraction theorem. Hargé obtained in fact an inequality when $f$ is not necessarily orthogonal to the linear functions, which  by a limiting argument recovers (1).

Point (3) was proven by Royen \cite{Royen}. It is known as the Gaussian correlation inequality  and was an open question during decades. We refer to \cite{latala-matlak:royen} and \cite{barthe:royen} for history of this result. Royen proved his result in its geometric form, for symmetric convex bodies, by  approximation with  finite intersections of  symmetric slabs.  The main ingredients are then an interpolation of some  dependent and independent Gaussian measures through their covariance matrix and clever computations of the Laplace transform of  multivariate Gamma distributions.  Royen thus proves its result  for some family of multivariate Gamma distributions.  Subsequently, Eskenazis, Nayar and Tkocz \cite{Eskenazis} noticed that Theorem \ref{thm:harge}(3) still holds for product measures whose marginals are mixtures of centered Gaussian measures. In the Appendix, we also show that Theorem \ref{thm:harge}(2) is true in the latter situation.





\

The first main new results of this paper are devoted to dimension one. In dimension one,  the covariance inequalities of Theorem \ref{thm:harge} are not limited to the Gaussian context but actually hold for \emph{any} probability
measure on $\R$ having a finite variance.

\begin{theo}\label{thm:main-1d} Let $\mu$ be any probability measure
on $\R$ admitting a second moment. 

\begin{enumerate}
\item For any convex functions $f$
and $g$, one has 
\begin{equation}
\Var(\mu)\, \Cov_{\mu}(f,g)\geq\Cov_{\mu}(f,x)\,\Cov_{\mu}(g,x).\label{eq-main-1d}
\end{equation}

\item Let $f$ be a log-concave function and $g$ be a convex function. Assume moreover that $f$ is orthogonal to the linear function $x$, then 
\begin{equation}
\cov_\mu(f,g)  \leq 0 \label{eq-main-1d-niv2}.
\end{equation}
\item  Let $f$  and  $g$ be some  quasi-concave functions that are both even, then
\begin{equation}
\cov_\mu(f,g)  \geq 0 \label{eq-main-1d-niv3}.
\end{equation}
\end{enumerate}
\end{theo}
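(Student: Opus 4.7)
My plan is to derive all three parts from a single monotonicity lemma built on top of Hoeffding's one-dimensional covariance identity. Under standard regularity,
$$\Cov_\mu(f,g) = \iint_{\R^2} H(s,t)\, f'(s) g'(t)\, ds\, dt, \qquad H(s,t) := F(s\wedge t)\bigl(1 - F(s\vee t)\bigr),$$
where $F$ is the cdf of $\mu$. Marginalising $H$ in one variable gives
$$\gamma_g(s) := \int H(s,t)\, g'(t)\, dt = F(s)(1-F(s))\,\bigl(\E_\mu[g\mid X > s] - \E_\mu[g\mid X\leq s]\bigr),$$
and I set $\Lambda(s) := \gamma_{\mathrm{id}}(s)\geq 0$, so $\int\Lambda = \Var(\mu)$. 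Point (3) is in fact orthogonal to this machinery and quick: evenness and quasi-concavity force $\{f\geq c\}$ to be a symmetric interval, so $f(x) = \phi(|x|)$ with $\phi$ non-increasing, and likewise $g(x) = \psi(|x|)$. Then $f(X), g(X)$ are comonotone functions of $|X|$, and Chebyshev's comonotone inequality applied to the law of $|X|$ yields $\Cov_\mu(f,g)\geq 0$.

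The heart of points (1) and (2) is the following key lemma: \emph{if $g$ is convex then $\Psi_g := \gamma_g/\Lambda$ is non-decreasing on the interior of $\supp(\mu)$.} I will prove it by writing $u = F(s)$, $\alpha = \E_\mu[X\mid X\leq s]$, $\beta = \E_\mu[X\mid X > s]$, $\alpha_g = \E_\mu[g\mid X\leq s]$, $\beta_g = \E_\mu[g\mid X > s]$, so $\Lambda = u(1-u)(\beta-\alpha)$ and $\gamma_g = u(1-u)(\beta_g-\alpha_g)$. The sign of $(\gamma_g/\Lambda)'$ matches the sign of $(\E_\mu[g]-g(s))(\beta-\alpha) - (\E_\mu[X]-s)(\beta_g-\alpha_g)$, and using $\E_\mu[X] = u\alpha + (1-u)\beta$ and $\E_\mu[g] = u\alpha_g + (1-u)\beta_g$ a routine expansion collapses this expression to
$$(\beta - s)(\alpha_g - g(s)) + (s - \alpha)(\beta_g - g(s)).$$
Since $\beta \geq s \geq \alpha$, and since the subgradient inequality $g(X) - g(s) \geq g'(s)(X-s)$, averaged against $\mu(\cdot\mid X\leq s)$ and $\mu(\cdot\mid X > s)$ respectively, gives $\alpha_g - g(s) \geq g'(s)(\alpha-s)$ and $\beta_g - g(s) \geq g'(s)(\beta - s)$, the two $g'(s)$-contributions cancel and the remainder is $\geq 0$. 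I expect this algebraic collapse and the two-sided use of the subgradient inequality to be the main obstacle; everything else is bookkeeping.

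With the lemma available, I will introduce the probability measure $\nu(ds) := \Lambda(s)\, ds/\Var(\mu)$ on $\R$. Hoeffding's identity and its symmetric form $\Cov_\mu(f,g) = \int g'\gamma_f\, dt$ give $\Cov_\mu(f,g)/\Var(\mu) = \E_\nu[f'\Psi_g]$, $\Cov_\mu(f,X)/\Var(\mu) = \E_\nu[f']$, and $\Cov_\mu(g,X)/\Var(\mu) = \E_\nu[g'] = \E_\nu[\Psi_g]$ (the last equality uses $\int\gamma_g = \Cov_\mu(g,X)$). Point (1) then reads $\Cov_\nu(f', \Psi_g) \geq 0$, and since $f'$ (by convexity of $f$) and $\Psi_g$ (by the key lemma) are both non-decreasing, this is Chebyshev's comonotone inequality under $\nu$. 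For point (2), the hypothesis $\Cov_\mu(f,X) = 0$ reads $\E_\nu[f'] = 0$, so $\E_\nu[f'\Psi_g] = \E_\nu[f'(\Psi_g - \Psi_g(m))]$, where I take $m$ to be the mode of $f$; log-concavity makes $f'/f$ non-increasing, hence $f'\geq 0$ on $(-\infty, m]$ and $f'\leq 0$ on $[m,\infty)$. By monotonicity of $\Psi_g$, $\Psi_g - \Psi_g(m)$ has the opposite sign on each half-line, so the integrand $f'(\Psi_g - \Psi_g(m))$ is $\leq 0$ pointwise, and integrating against $\nu\geq 0$ concludes that $\Cov_\mu(f,g)\leq 0$.
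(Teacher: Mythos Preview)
Your proof is correct, and it takes a route that is related to but genuinely distinct from both of the paper's arguments. The paper's first proof (Section~\ref{sec:Andreev-dim1}) is purely determinantal: it rewrites the desired inequality as $\det(\cov_\mu(F_i,G_j))\ge 0$ and applies Andreev's formula. The paper's second proof (Section~\ref{sec:kernel-dim1}) also starts from the Hoeffding identity, but then forms the \emph{two-dimensional} probability measure $\mu^{(1)}$ with density proportional to $H(s,t)$ and invokes the FKG inequality on $\R^2$, using that $H$ is totally positive. Your approach stays one-dimensional throughout: you introduce the measure $\nu(ds)=\Lambda(s)\,ds/\Var(\mu)$ and prove by hand that $\Psi_g=\gamma_g/\Lambda$ is monotone. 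These are the same object viewed differently: $\nu$ is the first marginal of $\mu^{(1)}$, and $\Psi_g(s)=\E_{\mu^{(1)}}[g'(Y)\mid X=s]$, so your monotonicity lemma is exactly the stochastic monotonicity of the conditionals of $\mu^{(1)}$, which in the paper follows from total positivity of $H$ and the FKG machinery. Your direct algebraic proof of this lemma (the collapse to $(\beta-s)(\alpha_g-g(s))+(s-\alpha)(\beta_g-g(s))$ and the subgradient argument) is elegant and avoids appealing to the general FKG/Holley theory; the price is that it is tailored to this specific situation, whereas the paper's $\mu^{(1)}$ framework extends cleanly to the weighted and product-measure settings of Sections~\ref{sec:tens} and~\ref{sec:global}.

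For point~(2) the difference is sharper: the paper tilts the two-dimensional kernel by $f(x)$ to form $\mu_f^{(1)}$ and again applies FKG, whereas you exploit the orthogonality $\E_\nu[f']=0$ together with the single sign change of $f'$ at the mode $m$ to get a pointwise sign for $f'(\Psi_g-\Psi_g(m))$. This is a nice and more elementary argument; one should note (as you implicitly do) that the condition $\E_\nu[f']=0$ forces the sign change of $f'$ to occur inside the support of $\nu$, so $\Psi_g(m)$ makes sense up to trivial cases. For point~(3), your observation that $f(x)=\phi(|x|)$ with $\phi$ non-increasing, followed by Chebyshev's inequality for the law of $|X|$, is essentially the same content as the paper's layer-cake argument in Section~\ref{sec:quasi-concave}, phrased a bit more directly.
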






The fact that Theorem \ref{thm:main-1d} holds for any probability measure whereas Theorem \ref{thm:harge} seems limited to the Gaussian setting is striking and rises the following natural question: what about general product measures? Before trying to answer this question, we shall introduce some notations and the hypotheses.

\

\textbf{Notations and hypotheses:} In all the sequel of the paper, we  consider $\mu= \mu_1 \otimes  \dots \otimes \mu_d$  to be  a product measure 
on $\R^d$. Moreover for  each $1 \leq k \leq d$, we denote  by $a_k(x_k)$  a positive function on $\R$ and by $A_k$ its  primitive, centered with respect to $\mu_k$ and we assume that $A_k \in L^2(\mu_k)$.
When we apply the results with $a_k\equiv 1$, we thus implicitly assume that the measure $\mu_k$ admits a second moment.
We  assume that $f,g\in L^2(\mu)$. In order to apply the tensorization method and to exchange derivative and integral, we also assume that all the first and second  partial   derivatives  of $f$ and $g$ are integrable with respect to $\mu$.
\begin{remark}
It is actually possible to  weaken the regularity assumptions on the second order partial derivatives. It is indeed sufficient to assume that  the first derivatives are monotonic on $\R^d$, at the price of standard approximation arguments. This is particularly transparent with the pure determinantal approach. 
But for clarity and simplicity, we prove the theorems by using the second order partial  derivatives.
\end{remark}

Arguably, the first basic idea to investigate general product measures is to use a tensorization  argument. This allows us to  obtain the following  extension of Theorem \ref{thm:main-1d}(1) to the higher dimensional case.

\begin{theo}\label{thm:Hu-produit-A}
Let $\mu$ be a product measure on $\R^d$.

Let $f$ and $g$ be two functions on $\R^d$ such that   for each pair $ 1\leq i,j \leq d$, 
the signs of 
\begin{equation}\label{eq:cond-l2-fg-mod}
\partial_{j} \left(\frac{\partial_{i}f(x)}{a_{i}(x_{i})}\right)  \textrm{ and  } \partial_{j} \left(\frac{\partial_{i}g(x)}{a_{i}(x_{i})}\right)
\end{equation} 
are constant on $\R^d$ and equal.
Then 
\[
\cov_\mu(f,g) \geq \sum_{i=1}^{d}\frac{1}{\var_{\mu_i}(A_i)}\;\Cov_{\mu}(f(x),A_{i}(x_i))\,\Cov_{\mu}(g(x),A_{i}(x_i)).
\]

\end{theo}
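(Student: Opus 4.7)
My plan is to prove the theorem by induction on the dimension $d$, with a weighted one-dimensional version of Theorem~\ref{thm:main-1d}(1) serving as the base case, and a combination of integration by parts and the Harris--FKG correlation inequality absorbing the cross term that arises in the inductive step. The weighted 1D statement I would first establish is: for any probability measure $\nu$ on $\R$ with a density, any positive function $a$ on $\R$ with $\nu$-centered primitive $A\in L^2(\nu)$, and any $\varphi,\psi$ such that $\varphi'/a$ and $\psi'/a$ are monotone in the same direction,
\[
\var_\nu(A)\,\cov_\nu(\varphi,\psi)\ \ge\ \cov_\nu(\varphi,A)\,\cov_\nu(\psi,A).
\]
This follows from Theorem~\ref{thm:main-1d}(1) applied on the pushforward $A_\ast\nu$, because the monotone change of variable $y=A(x)$ makes $\varphi\circ A^{-1},\psi\circ A^{-1}$ convex in $y$.

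For the inductive step ($d-1\Rightarrow d$), I would use the law of total covariance
\[
\cov_\mu(f,g)\ =\ \EE_{\mu_{\neq 1}}\bigl[\cov_{\mu_1}(f,g)\bigr]\ +\ \cov_{\mu_{\neq 1}}(\bar f,\bar g),\qquad \bar f(x_{\neq 1}):=\int f(x_1,\cdot)\,d\mu_1(x_1).
\]
The case $i=j=1$ of the hypothesis guarantees that $\partial_1 f/a_1$ and $\partial_1 g/a_1$ are monotone in $x_1$ in the same direction, so the weighted 1D inequality applied pointwise in $x_{\neq 1}$ gives $\cov_{\mu_1}(f,g)\ge X(x_{\neq 1})\,Y(x_{\neq 1})/\var(A_1)$ with $X:=\cov_{\mu_1}(f,A_1)$ and $Y:=\cov_{\mu_1}(g,A_1)$. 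Taking $\mu_{\neq 1}$-expectations and writing $\EE_{\mu_{\neq 1}}[XY]=\cov_\mu(f,A_1)\cov_\mu(g,A_1)+\cov_{\mu_{\neq 1}}(X,Y)$ isolates the target $i=1$ contribution. The marginals $\bar f,\bar g$ inherit the sign hypothesis on $\R^{d-1}$ with weights $a_2,\dots,a_d$ (since integrating the sign-constant $\partial_j(\partial_i f/a_i)$ against $\mu_1$ preserves its sign for $i,j\ge 2$), so the induction hypothesis in dimension $d-1$ delivers the contributions for $i\ge 2$, together with the identity $\cov_{\mu_{\neq 1}}(\bar f,A_i)=\cov_\mu(f,A_i)$ for $i\ge 2$. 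Summing the two pieces, the theorem reduces to showing that the residual cross term $\cov_{\mu_{\neq 1}}(X,Y)$ is non-negative.

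This positivity is the main obstacle, and my plan is to establish it by integrating by parts in $x_1$ and invoking FKG. Setting $\phi_1(x_1):=-\rho_1(x_1)^{-1}\int_{-\infty}^{x_1}A_1\rho_1$, the monotonicity and $\mu_1$-centering of $A_1$ force $\phi_1\ge 0$, and the integrability hypotheses of the paper kill the boundary terms, yielding $X(x_{\neq 1})=\int \partial_1 f(x_1,x_{\neq 1})\,\phi_1(x_1)\,d\mu_1(x_1)$. Differentiating in $x_j$ for $j\neq 1$ and using the identity $\partial_{1j}f=a_1\,\partial_j(\partial_1 f/a_1)$,
\[
\partial_j X(x_{\neq 1})\ =\ \int a_1(x_1)\,\partial_j(\partial_1 f/a_1)(x_1,x_{\neq 1})\,\phi_1(x_1)\,d\mu_1(x_1),
\]
whose integrand has the constant sign $\epsilon_{1j}$ supplied by the hypothesis (since $a_1,\phi_1\ge 0$). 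Hence $\partial_j X$, and by the same computation $\partial_j Y$, share the \emph{same} constant sign $\epsilon_{1j}$ on $\R^{d-1}$. Thus $X$ and $Y$ are each coordinate-wise monotone in $x_{\neq 1}$ with matching directions, and the Harris--FKG correlation inequality for the product measure $\mu_{\neq 1}$ (after an innocuous reflection of the coordinates where $\epsilon_{1j}<0$) gives $\cov_{\mu_{\neq 1}}(X,Y)\ge 0$, completing the induction.
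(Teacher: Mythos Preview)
Your proof is correct and shares the paper's overall strategy---decompose the covariance by conditioning, apply a weighted one-dimensional inequality to each conditional piece, and absorb the resulting cross terms via the Harris--FKG inequality for product measures---but the implementations differ in a few instructive ways.

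First, the decomposition: the paper uses the full tensorization of Lemma~\ref{lem:usual-tens}, writing $\cov_\mu(f,g)=\sum_{k=1}^d\EE_{\mu_1\cdots\mu_{k-1}}[\cov_{\mu_k}(f_k,g_k)]$ with $f_k$ obtained by integrating out the \emph{later} coordinates, and treats all $d$ summands in parallel. You instead peel off one coordinate at a time and recurse; unrolling your induction gives the mirror decomposition with the \emph{earlier} coordinates integrated out. Both are valid, and your inductive phrasing is arguably cleaner.

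Second, the one-dimensional step: the paper invokes the Hoeffding covariance relation (Lemma~\ref{thm:cov-mod-a}(1)), which rewrites the defect $\cov_\nu(\varphi,\psi)-\cov_\nu(\varphi,A)\cov_\nu(\psi,A)/\var_\nu(A)$ as a covariance under a new totally positive measure on $\R^2$, and concludes by FKG there. Your reduction to Theorem~\ref{thm:main-1d}(1) via the monotone change of variable $y=A(x)$ is more elementary and avoids introducing the Hoeffding kernel.

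Third, the cross term: the paper computes $\partial_j\cov_{\mu_k}(f_k,A_k)$ by differentiating the Hoeffding representation $\iint\partial_k f_k\,k_{\mu_k}(x_k,y_k)\,a_k(y_k)\,dx_k\,dy_k$; you instead integrate by parts directly with the Stein-type weight $\phi_1(x_1)=-\rho_1(x_1)^{-1}\int_{-\infty}^{x_1}A_1\rho_1$. These are equivalent (indeed $\phi_1(x_1)\rho_1(x_1)=\int k_{\mu_1}(x_1,y_1)a_1(y_1)\,dy_1$), but your route assumes $\mu_1$ has a density, whereas the paper's Hoeffding identity works for arbitrary $\mu_1$. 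This is the only (minor) place where the paper's argument is slightly more general.
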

Taking the functions $a_i\equiv 1$ gives the following corollary.
\begin{coro}\label{cor:hu-produit}
Let $\mu$ be a product measure on $\R^d$.
 Let $f$ and $g$ be two functions on $\R^d$ such that   for each couple $ 1\leq i,j \leq d$, the signs of  
\begin{equation}\label{eq:cond-l2-fg}
\partial_{i,j} f(x)  \textrm{ and  } \partial_{i,j}g(x) 
\end{equation} 
are constant and equal.
Then 
\[
\cov_\mu(f,g) \geq \sum_{i=1}^{d}\frac{1}{\Var(\mu_i)}\;\Cov_{\mu}(f(x),x_{i})\,\Cov_{\mu}(g(x),x_{i}).
\]
\end{coro}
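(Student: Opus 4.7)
The plan is to recognize Corollary \ref{cor:hu-produit} as the immediate specialization of Theorem \ref{thm:Hu-produit-A} obtained by choosing the weights $a_i \equiv 1$ for every $i = 1, \ldots, d$, so my strategy is simply to apply the theorem and rewrite each ingredient under this choice.

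First, I fix the relevant identifications. With $a_i \equiv 1$, a centered primitive of $a_i$ with respect to $\mu_i$ is $A_i(x_i) = x_i - m_i$, where $m_i := \int x_i\, d\mu_i$; the requirement $A_i \in L^2(\mu_i)$ then reduces to the assumption that $\mu_i$ has a second moment, which is precisely the implicit hypothesis noted in the paragraph preceding Theorem \ref{thm:Hu-produit-A}. Since $A_i$ differs from $x_i$ only by an additive constant, one has $\Var_{\mu_i}(A_i) = \Var(\mu_i)$ and $\Cov_\mu(h(x), A_i(x_i)) = \Cov_\mu(h(x), x_i)$ for every integrable $h$, by translation invariance of the variance and covariance.

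Next, I check that hypothesis \eqref{eq:cond-l2-fg-mod} collapses to \eqref{eq:cond-l2-fg} under the substitution $a_i \equiv 1$: for each pair $i,j$ one simply has $\partial_j\bigl(\partial_i f(x)/a_i(x_i)\bigr) = \partial_{i,j} f(x)$ and similarly for $g$, so the two sets of assumptions coincide. Substituting these identifications into the conclusion of Theorem \ref{thm:Hu-produit-A} then yields the stated inequality. I anticipate no real obstacle; the only point worth highlighting is the translation invariance that permits replacing $A_i(x_i)$ by $x_i$ on both sides of the inequality.
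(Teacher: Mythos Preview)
Your proposal is correct and is exactly the paper's approach: the corollary is stated immediately after Theorem~\ref{thm:Hu-produit-A} with the one-line justification ``Taking the functions $a_i\equiv 1$ gives the following corollary,'' and you have simply spelled out the routine identifications ($A_i(x_i)=x_i-m_i$, $\Var_{\mu_i}(A_i)=\Var(\mu_i)$, $\Cov_\mu(\cdot,A_i)=\Cov_\mu(\cdot,x_i)$, and $\partial_j(\partial_i f/a_i)=\partial_{i,j}f$) that make this specialization work.
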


A striking point is that Corollary \ref{cor:hu-produit} is not limited to the Gaussian setting, but holds for any product measure with marginals having a finite second moment. Particularizing to the Gaussian case, where $\mu=\gamma$, the conclusion of Corollary \ref{cor:hu-produit} is the same as in Theorem
\ref{thm:harge}(1), but under different assumptions on the functions $f$ and $g$. 
Even if they coincide in dimension one, the two assumptions are different in higher dimensions and are not included one into another. The assumption of Corollary \ref{cor:hu-produit} seems less classical from a geometric point of view than the classical convexity assumption. Actually, such assumption on the sign of the second partial derivatives also appears in the context of Gaussian comparison theorems, see for instance \cite[Theorem 3.11]{LedTal:11}, that implies Slepian's lemma and Gordon's min-max theorem.
Note finally that in the Gaussian setting, even if the statement of Corollary \ref{cor:hu-produit}
seems to be new, its proof could be also deduced from the arguments developed in the proof of Hu \cite{hu-chaos}. 


\begin{remark} The conditions stated in \eqref{eq:cond-l2-fg-mod} can also be written as the conjunction of Conditions \eqref{eq:cond-a-i} and \eqref{eq:cond-a-ij} below: for each $1\leq i \leq d$, the signs of 
\begin{equation}\label{eq:cond-a-i}
\partial_{i} \left(\frac{\partial_{i}f(x)}{a_{i}(x_{i})}\right)  \textrm{ and  } \partial_{i} \left(\frac{\partial_{i}g(x)}{a_{i}(x_{i})}\right)
\end{equation}
are constant and equal 
and for each couple $1\leq i\neq j \leq d $, the signs of 
\begin{equation}\label{eq:cond-a-ij}
\partial_{ij} f(x)  \textrm{ and  }\partial_{ij} g(x)
\end{equation} are constant and equal.
The condition described by Equation~\eqref{eq:cond-a-i} can be interpreted as follows: let $B_i : \R^d \to \R^d$ be the inverse bijection of 
\[ (x_1,\ldots,x_d) \mapsto (x_1,\ldots,A_i(x_i),\ldots,x_d),\] then Condition \eqref{eq:cond-a-i} means that the functions \[x_i \mapsto (f\circ B_i)(x_1,\ldots,x_d) \ , \ x_i \mapsto (g \circ B_i)(x_1,\ldots,x_d)\] are both convex or both concave. 
In the case  where $a_i \equiv 1$ for all $ i=1,\dots, d$, and if moreover,  all the signs in \eqref{eq:cond-a-i} are the same, then the functions $f$ and $g$ are both coordinatewise convex or both coordinatewise concave. 
\end{remark}

We now want to investigate what happens when the functions are assumed to satisfy some symmetries. As we shall see, the good notion that fits with the tensorization argument is quite strong and is the  unconditionality of (at least) one function.

 We  recall that a function $f:\R^d \to \R$ is said to be \emph{unconditional} if it is symmetric with respect to each  hyperplan of coordinates : for all $(x_1,\dots, x_d)\in \R^d$,
\[
f(x_1,\dots, x_d) = f (\eps_1 x_1,\dots, \eps_d x_d).
\]
holds for each choice of signs $( \eps_1,\dots, \eps_d ) \in \{-1,1\}^d$.


Theorem \ref{thm:Hu-tens-incond-A}  below is a multi-dimensional extension of Theorem \ref{thm:main-1d}(1) with a symmetry assumption.

\begin{theo}\label{thm:Hu-tens-incond-A}
Let $\mu= \mu_1 \otimes  \dots \otimes \mu_d$ be a product measure on $\R^d$ and assume  that for each $1\leq i \leq d$, the measure $\mu_i$ is even. 
Let $f$ and $g$ be two functions on $\R^d$ such that   for each $ 1\leq i \leq d$ and  all $x\in \R^d$, the signs of 
\begin{equation}\label{eq:cond-l2-idem}
\partial_{i}\left( \frac{\partial _i f(x)}{a_i(x_i)} \right) \textrm{ and  \ } \partial_{i}\left( \frac{\partial _i g(x)}{a_i(x_i)} \right) 
\end{equation}
are constant and equal.
Assume moreover that one of the functions is unconditional. Then 
\[
\cov_\mu(f,g) \geq 0.
\]
\end{theo}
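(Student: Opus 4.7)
The plan is to argue by induction on the dimension $d$, peeling off one coordinate at a time via the law of total covariance and using the unconditionality to annihilate the surviving linear-type term at each step. Without loss of generality take $f$ to be the unconditional function and, up to replacing $(f,g)$ by $(-f,-g)$ (which preserves both the covariance and the unconditionality of $f$), assume the common sign in \eqref{eq:cond-l2-idem} is nonnegative.

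For the base case $d=1$, the sign hypothesis says that $f$ and $g$ are convex in the coordinate $y=A_1(x_1)$. Applying Theorem~\ref{thm:main-1d}(1) in the pushforward variable, whose variance is $\Var_{\mu_1}(A_1)$, yields the $a_1$-weighted form
\[
\Var_{\mu_1}(A_1)\,\Cov_{\mu_1}(f,g)\;\ge\;\Cov_{\mu_1}(f,A_1)\,\Cov_{\mu_1}(g,A_1).
\]
Since $f$ is even, $A_1$ is odd (being a centered primitive of a positive function against the even measure $\mu_1$), and $\mu_1$ is even, the integrand $fA_1$ is odd, so $\Cov_{\mu_1}(f,A_1)=0$ and $\Cov_{\mu_1}(f,g)\ge 0$.

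For the induction step, write $x=(x_1,x')$, $\mu':=\mu_2\otimes\cdots\otimes\mu_d$, and set $F(x_1):=\int f\,d\mu'$, $G(x_1):=\int g\,d\mu'$. The law of total covariance reads
\[
\Cov_\mu(f,g)=\int_\R \Cov_{\mu'}\!\bigl(f(x_1,\cdot),g(x_1,\cdot)\bigr)\,d\mu_1(x_1)+\Cov_{\mu_1}(F,G).
\]
For each fixed $x_1$, the slices $f(x_1,\cdot)$ and $g(x_1,\cdot)$ inherit the sign hypothesis for $2\le i\le d$, and $f(x_1,\cdot)$ remains unconditional on $\R^{d-1}$, so by induction the first integrand is pointwise nonnegative. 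Interchanging derivative and integral (justified by the standing regularity assumptions) gives $\partial_1(F'/a_1)=\int \partial_1(\partial_1 f/a_1)\,d\mu'$ and analogously for $G$, both of the same constant sign, while $F(-x_1)=F(x_1)$ by unconditionality of $f$ in $x_1$. The base case applied to $(F,G)$ on $(\R,\mu_1)$ then yields $\Cov_{\mu_1}(F,G)\ge 0$, closing the induction. The main delicate point is ensuring in the base case that $A_1$ is odd so that unconditionality and evenness really annihilate $\Cov_{\mu_1}(f,A_1)$; this is automatic for $a_i\equiv 1$ (the corollary-level setting) and in general requires a symmetry on $a_1$ compatible with the evenness of $\mu_1$.
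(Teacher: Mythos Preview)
Your proof is correct and follows essentially the same route as the paper. The paper uses the tensorization decomposition $\cov_\mu(f,g)=\sum_{k=1}^d\int\cov_{\mu_k}(f_k,g_k)\,d\mu_1\cdots d\mu_{k-1}$ (Lemma~\ref{lem:usual-tens}), which is precisely the unrolling of your induction, and then applies the weighted one-dimensional Hoeffding relation (Lemma~\ref{thm:cov-mod-a}) to each summand; the ``linear'' pieces $\cov_{\mu_k}(f_k,A_k)$ vanish by exactly the parity argument you give (unconditional $f$, even $\mu_k$, odd $A_k$), while the remaining pieces are nonnegative by the sign hypothesis and FKG. Your observation that the argument silently requires $A_i$ to be odd (hence $a_i$ even) matches an implicit assumption in the paper's own proof.
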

The following corollary is directly obtained by setting again the functions $a_i$ to be identically equal to $1$.

\begin{coro}\label{cor:Hu-tens-incond}
Let $\mu= \mu_1 \otimes  \dots \otimes \mu_d$ be a product measure on $\R^d$ and assume  that for $1\leq i \leq d$, the measures $\mu_i$ are even. 
Let $f$ and $g$ be two functions on $\R^d$ such that   for each $ 1\leq i \leq d$, the signs of 
\begin{equation}\label{eq:cond-l2-cor}
\partial_{i,i}f(x) \textrm{ and  \ }\partial_{i,i}g(x) 
\end{equation}
are constant and equal.
Assume moreover that one of the functions is unconditional. Then 
\[
\cov_\mu(f,g) \geq 0.
\]
\end{coro}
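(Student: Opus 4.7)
The statement is the specialization of Theorem~\ref{thm:Hu-tens-incond-A} obtained by taking $a_i \equiv 1$, since then $\partial_i(\partial_i f/a_i) = \partial_{ii} f$; so it suffices to sketch a proof of the theorem itself. My plan is to combine the standard martingale-difference decomposition of the covariance in a product space with the one-dimensional Theorem~\ref{thm:main-1d}(1). Set $\mathscr{F}_i = \sigma(x_1,\ldots,x_i)$, $F_i = \EE_\mu[f \mid \mathscr{F}_i]$ and $G_i = \EE_\mu[g \mid \mathscr{F}_i]$. Orthogonality of the martingale increments $d_i(f) = F_i - F_{i-1}$ and $d_i(g) = G_i - G_{i-1}$ in $L^2(\mu)$ gives
\[
\Cov_\mu(f,g) \;=\; \sum_{i=1}^d \EE_{\mu_1 \otimes \cdots \otimes \mu_{i-1}}\!\bigl[\Cov_{\mu_i}(F_i,G_i)\bigr],
\]
where the inner covariance is taken with respect to $\mu_i$ in the single variable $x_i$, the other coordinates being held fixed. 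The task reduces to showing each summand is nonnegative.

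For fixed $i$ and fixed $(x_1,\ldots,x_{i-1})$, the partial functions $x_i \mapsto F_i$ and $x_i \mapsto G_i$ are both convex or both concave. Indeed, differentiating twice under the integral sign,
\[
\partial_{ii} F_i(x_1,\ldots,x_i) \;=\; \int \partial_{ii} f(x)\, d\mu_{i+1}(x_{i+1})\cdots d\mu_d(x_d),
\]
and since by~\eqref{eq:cond-l2-cor} the integrand has constant sign on $\R^d$, that same sign is shared by $\partial_{ii} F_i$ and $\partial_{ii} G_i$. Applying Theorem~\ref{thm:main-1d}(1) to $\pm F_i,\pm G_i$ (according to the common sign) yields
\[
\Var(\mu_i)\,\Cov_{\mu_i}(F_i,G_i) \;\geq\; \Cov_{\mu_i}(F_i,x_i)\,\Cov_{\mu_i}(G_i,x_i).
\]

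To conclude, I exploit the unconditionality hypothesis: assume without loss of generality that $g$ is unconditional. Then, for each $i$, the partial integration against the even measures $\mu_{i+1}\otimes\cdots\otimes\mu_d$ preserves evenness in $x_i$, so $x_i \mapsto G_i(x_1,\ldots,x_i)$ is even. Since $\mu_i$ is itself even, the identity map $x_i$ has zero $\mu_i$-mean and is $L^2(\mu_i)$-orthogonal to every even function, whence $\Cov_{\mu_i}(G_i,x_i) = 0$. The right-hand side of the displayed inequality vanishes, so each inner covariance is nonnegative, and summing over $i$ gives $\Cov_\mu(f,g) \geq 0$.

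The main obstacle is bookkeeping rather than substance: one must verify that the unconditionality of $g$ propagates through the partial integrations defining the $G_i$ (immediate, since they use even measures), that the constant-sign condition on $\partial_{ii}$ transmits to $F_i$ and $G_i$ (differentiation under the integral), and that the integrability and regularity assumptions stated in the introduction legitimize the exchanges of derivative and integral. Once these routine verifications are in place, the combination of tensorization, the one-dimensional inequality~\eqref{eq-main-1d}, and the evenness-induced cancellation closes the proof automatically.
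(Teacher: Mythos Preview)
Your proof is correct and follows essentially the same route as the paper: both use the tensorization decomposition of Lemma~\ref{lem:usual-tens}, apply the one-dimensional inequality to each conditional covariance $\Cov_{\mu_i}(F_i,G_i)$, and then use unconditionality together with the evenness of $\mu_i$ to force the right-hand side term $\Cov_{\mu_i}(G_i,x_i)$ (respectively $\Cov_{\mu_k}(f_k,A_k)$ in the paper) to vanish. The only cosmetic difference is that you invoke Theorem~\ref{thm:main-1d}(1) as a black box, whereas the paper unpacks it via the weighted Hoeffding relation (Lemma~\ref{thm:cov-mod-a}) and the FKG inequality for $\mu^{(1)}_{k,(a_k,a_k)}$.
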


With these symmetries, the tensorization method also leads to the following extension of Theorem \ref{thm:main-1d}(2) and (3). 

\

\begin{theo}\label{thm:Harge-Royen-tens}
Let $\mu= \mu_1 \otimes  \dots \otimes \mu_d$ be a product measure on $\R^d$. 
\begin{enumerate}
\item Assume that for $1\leq i\leq d$, the marginals $\mu_i$ are even and log-concave. 
Let $f=e^{-\phi} $ be  an unconditional positive log-concave function
and $g$ be a coordinatewise convex function on $\R^d$,
then 
\[
\cov_\mu(f,g) \leq 0.
\]
\item Assume that $f$ and $g$ are both unconditional and  coordinatewise quasi-concave. Then
\[
\cov_\mu(f,g) \geq 0.
\]
\end{enumerate}
\end{theo}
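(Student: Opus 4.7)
The plan is to deduce both parts from the one-dimensional inequalities of Theorem~\ref{thm:main-1d} by tensorizing the covariance along the product structure $\mu=\mu_1\otimes\cdots\otimes\mu_d$, in the same spirit as the proofs of Theorems~\ref{thm:Hu-produit-A} and~\ref{thm:Hu-tens-incond-A}. For $h\in\{f,g\}$ and $0\le i\le d$, set
\[
\tilde h_i(x_1,\dots,x_i) := \int h(x_1,\dots,x_d)\, d\mu_{i+1}(x_{i+1})\cdots d\mu_d(x_d),
\]
so that $\tilde h_d=h$ and $\tilde h_0=\int h\,d\mu$. The martingale decompositions of $f-\int f\,d\mu$ and $g-\int g\,d\mu$ along the filtration $\sigma(x_1,\dots,x_k)$ yield, by orthogonality of martingale increments, the tensorization identity
\[
\Cov_\mu(f,g)=\sum_{i=1}^d \int \Cov_{\mu_i}\!\bigl(\tilde f_i(x_1,\dots,x_{i-1},\cdot),\,\tilde g_i(x_1,\dots,x_{i-1},\cdot)\bigr)\, d\mu_1(x_1)\cdots d\mu_{i-1}(x_{i-1}).
\]
It therefore suffices, for each $i$ and each fixed $(x_1,\dots,x_{i-1})$, to establish the appropriate sign for a one-dimensional covariance in the variable $x_i$, and to integrate.

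For Part~(1), I fix $i$ and $(x_1,\dots,x_{i-1})$ and check the hypotheses of Theorem~\ref{thm:main-1d}(2) on $(\R,\mu_i)$. Since $f$ is log-concave on $\R^d$ and $\mu_{i+1},\dots,\mu_d$ are log-concave, Pr\'ekopa--Leindler gives log-concavity of $\tilde f_i$ on $\R^i$, hence of the slice $\tilde f_i(x_1,\dots,x_{i-1},\cdot)$ on $\R$. The unconditionality of $f$ (in particular its evenness in $x_i$) transfers directly to $\tilde f_i$, which is therefore even in $x_i$; combined with evenness of $\mu_i$, this yields $\Cov_{\mu_i}(\tilde f_i(x_1,\dots,x_{i-1},\cdot),x_i)=0$, which is precisely the orthogonality hypothesis of Theorem~\ref{thm:main-1d}(2). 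Since $g$ is coordinatewise convex in $x_i$ and convexity of a one-variable function is preserved under integration against any measure, $\tilde g_i(x_1,\dots,x_{i-1},\cdot)$ is convex. Theorem~\ref{thm:main-1d}(2) then provides a non-positive inner covariance, and integration yields $\Cov_\mu(f,g)\le 0$.

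For Part~(2), I follow the same scheme but invoke Theorem~\ref{thm:main-1d}(3) instead. Here no log-concavity is needed: for any fixed $(x_{i+1},\dots,x_d)$ the map $x_i\mapsto f(x_1,\dots,x_d)$ is even (by unconditionality) and quasi-concave (by coordinatewise quasi-concavity), i.e.\ non-increasing in $|x_i|$. This class of functions of $x_i$ is a convex cone closed under pointwise monotone limits, hence it is stable under integration against $\mu_{i+1}\otimes\cdots\otimes\mu_d$; therefore $\tilde f_i(x_1,\dots,x_{i-1},\cdot)$ is even and quasi-concave in $x_i$, and the same holds for $\tilde g_i$. Theorem~\ref{thm:main-1d}(3) applied to the one-dimensional measure $\mu_i$ then gives a non-negative inner covariance at each level, and integration concludes.

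The only delicate point is the stability of the structural properties under the partial integration defining $\tilde f_i$ and $\tilde g_i$. Log-concavity of $\tilde f_i$ rests on Pr\'ekopa--Leindler and explains exactly why log-concave marginals are needed in Part~(1); by contrast, the preservation of coordinatewise convexity (Part~(1)) and of the even-unimodal property (Part~(2)) are elementary consequences of the definitions. Once these stability facts are in hand, the rest of the argument is a routine reduction to the one-dimensional results of Theorem~\ref{thm:main-1d}.
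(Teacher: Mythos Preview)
Your proposal is correct and follows essentially the same route as the paper: tensorize the covariance via Lemma~\ref{lem:usual-tens}, check that the one-dimensional slices $\tilde f_i(x_1,\dots,x_{i-1},\cdot)$ and $\tilde g_i(x_1,\dots,x_{i-1},\cdot)$ inherit the right structure (log-concavity via Pr\'ekopa, evenness from unconditionality, convexity or even-unimodality by averaging), and then apply Theorem~\ref{thm:main-1d}(2) or~(3). The only cosmetic difference is that for Part~(1) the paper unfolds the one-dimensional step through Lemma~\ref{thm:cov-mod-a}(2) and the FKG inequality rather than quoting Theorem~\ref{thm:main-1d}(2) directly, but this is the same argument; for Part~(2) your stability argument is exactly Lemma~\ref{lem:marg_quasi_con}.
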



\


A drawback of this tensorization approach is arguably that in Theorem \ref{thm:Hu-tens-incond-A} 
and Corollary \ref{cor:Hu-tens-incond}, we assume a strong symmetry property: the unconditionality of at least one function.  

In order to require less  symmetry assumptions, it is  natural to try to use, instead of the tensorization argument,  a  more global approach. A first attempt would be to use the interpolation with the associated Markov semi-group and the covariance representation  given in \eqref{eq:cov-rep-interpolation}. Actually, we shall provide a slightly different covariance representation, based on an argument of ``duplication'' of random variables (Lemma \ref{lem:var-prod}). 
The main reason for this choice is that the latter approach is much simpler than the semi-group approach and is also more effective. 
See more comments in  Section \ref{sec:comments}.
 As expected, this approach allows us to reduce drastically the symmetries required on the functions, but at prize of considering some convexity type assumptions that are less common.
Theorem \ref{thm:Harge-Royen-global} below provides an extension of Theorem \ref{thm:main-1d}(2) and (3).

\begin{theo}\label{thm:Harge-Royen-global}
Let $\mu$ be a product measure on $\R^d$.
\begin{enumerate}
\item Let $f=e^{-\phi}$ and $g$ be two functions on $\R^d$ such that  
all $x\in \R^d$, 
\begin{equation}\label{eq:cond-ii-phi-g}
\partial_{i} \left( \frac{\partial_i \phi(x)}{a_i(x_i)} \right) \leq 0   \textrm{ and  } \partial_{i} \left( \frac{\partial_i g(x)}{a_i(x_i)} \right)\geq 0 \textrm{ for all } 1\leq i \leq d, 
\end{equation}
and 
\begin{equation}\label{eq:cond-ij-phi-g}
\partial_{i,j} \phi(x) \leq 0  \textrm{ and  } \partial_{i,j }g(x)\geq 0 \textrm{ for all } 1\leq i\neq j \leq d. 
\end{equation}
Assume moreover that $f$ is orthogonal to the functions $A_i(x_i)$ for all $1\leq i\leq d$,
then 
\[
\cov_\mu(f,g)  \geq 0.
\]
\item Let $f=e^{-\phi}$ and $g=e^{-\psi}$ be two functions on $\R^d$ such that  for   all $x\in \R^d$, 
\begin{equation}\label{eq:cond-ii-phi-psi}
\partial_{i} \left( \frac{\partial_i \phi(x)}{a_i(x_i)} \right) \leq 0   \textrm{ and  } \partial_{i} \left( \frac{\partial_i \psi(x)}{a_i(x_i)} \right)\leq 0  \textrm{ for all } 1\leq i \leq d, 
\end{equation}
and 
\begin{equation}\label{eq:cond-ij-phi-psi}
\partial_{i,j} \phi(x) \leq 0  \textrm{ and  } \partial_{i,j }\psi(x)\leq 0 \textrm{ for all } 1\leq i\neq j \leq d. 
\end{equation}
Assume moreover that  the product measure $\mu$ is symmetric and the functions $a_i$ are even for $1\leq i \leq d$ and that also both  $f$ and $g$ are even, 
then 
\[
\cov_\mu(f,g) \geq 0.
\]
\end{enumerate}
\end{theo}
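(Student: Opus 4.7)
My approach is based on a duplication representation of the covariance. Let $X,Y$ be independent copies with law $\mu$; the starting identity, which I expect is the essence of Lemma \ref{lem:var-prod}, reads
\[
2\cov_\mu(f,g) \;=\; \E\bigl[(f(X)-f(Y))(g(X)-g(Y))\bigr].
\]
I would telescope each factor along a coordinate-wise path. Setting $Z^{(0)}=Y$, $Z^{(k)}=(X_1,\dots,X_k,Y_{k+1},\dots,Y_d)$, $Z^{(d)}=X$, one has $f(X)-f(Y)=\sum_{k=1}^{d}\Delta_k f$, where
\[
\Delta_k f \;:=\; f(Z^{(k)})-f(Z^{(k-1)}) \;=\; \int_{Y_k}^{X_k}\partial_k f(Z^{(k-1)}_{[k\to t]})\,dt,
\]
and analogously for $g$ (or $\psi$ in Part (2)). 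Substituting back and expanding splits the covariance into a diagonal and an off-diagonal part,
\[
2\cov_\mu(f,g)\;=\;\sum_{i=1}^{d}\E[\Delta_i f\cdot\Delta_i g]\;+\;\sum_{i\neq j}\E[\Delta_i f\cdot\Delta_j g].
\]

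For the off-diagonal contributions ($i\neq j$) in both parts, I would perform an integration by parts in the $x_j$ variable inside $\Delta_j g$ (together with the corresponding manipulation on $\Delta_i f = -\Delta_i\phi\cdot f+\dots$ that exposes $\partial_{ij}\phi$). The hypotheses \eqref{eq:cond-ij-phi-g}--\eqref{eq:cond-ij-phi-psi} on the mixed partials of $\phi$, $\psi$ and $g$ are precisely what forces each of these expectations to have the correct sign. For the diagonal contributions ($i=j$), after integrating out all coordinates other than $i$, the problem reduces to a one-dimensional covariance between two conditional $A_i$-slices. In Part (1) this slice satisfies the hypotheses of Theorem \ref{thm:main-1d}(2): conditional $f$ is log-concave in $A_i(x_i)$ by \eqref{eq:cond-ii-phi-g}, conditional $g$ is convex in $A_i(x_i)$, and the orthogonality $\cov_\mu(f,A_i)=0$ is used to discount the linear-in-$A_i$ part of $g$. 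In Part (2) the slice of $f$ and of $g$ are both log-concave in $A_i$ and even, so log-concavity plus evenness implies quasi-concavity plus evenness, and Theorem \ref{thm:main-1d}(3) applies.

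The orthogonality hypothesis in Part (2) is automatic: since $a_i$ is even, the centered primitive $A_i$ is odd, and the evenness of $f,g$ combined with the symmetry of $\mu_i$ gives $\cov_\mu(f,A_i)=\cov_\mu(g,A_i)=0$ without assumption. This also means the symmetry assumptions in Part (2) give the same structural leverage as the orthogonality in Part (1), so the two parts can be treated by a common template.

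The main obstacle I anticipate is that both the orthogonality (in Part 1) and the evenness (in Part 2) are \emph{global} properties of $f$ and $g$, whereas the 1D inequalities of Theorem \ref{thm:main-1d}(2)--(3) are naturally invoked on each conditional 1D slice, where neither property need hold pointwise in the other coordinates. Bridging this gap is the delicate step: it requires either an aggregation of the diagonal contributions that uses only the global hypothesis, or an argument that exploits the joint symmetry of the pair $(X,Y)$ under the duplication identity. The cross-partial sign conditions together with the product/symmetry structure of $\mu$ are what make this aggregation go through, and they are precisely the features that distinguish the present global approach from the more restrictive tensorization approach of Theorem \ref{thm:Harge-Royen-tens}.
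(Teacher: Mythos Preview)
Your starting point diverges from the paper's in a way that creates real difficulties. The duplication identity in Lemma~\ref{lem:var-prod} is \emph{not} the symmetric formula $2\cov_\mu(f,g)=\E[(f(X)-f(Y))(g(X)-g(Y))]$ followed by telescoping both factors. In the paper one telescopes only $g$, writing $\cov_\mu(f,g)=\E[f(X)(g(X)-g(X'))]=\sum_i \E[f(X)\tilde\Delta_i g]$, and then symmetrizes each summand separately in the $i$-th coordinate. This yields a \emph{single} sum $\cov_\mu(f,g)=\tfrac12\sum_i\E[\Delta_i f\cdot\tilde\Delta_i g]$, with $\Delta_i f$ a difference in coordinate $i$ only (all other coordinates at $X$) and $\tilde\Delta_i g$ a difference in coordinate $i$ with coordinates $>i$ switched to $X'$. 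Your double telescoping instead produces a double sum with off-diagonal terms $\E[\Delta_i f\cdot\Delta_j g]$, $i\neq j$, and your proposed treatment of these is the first genuine gap: the hypotheses concern $\partial_{ij}\phi$ and $\partial_{ij}g$, but the cross term pairs $\partial_i f$ with $\partial_j g$ and there is no evident integration by parts that exposes a second mixed derivative of a single function with the right sign. I do not see how to control these terms individually, and the paper simply never generates them.

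The second gap is the one you flag yourself, and it is not resolved in the paper by reducing to Theorem~\ref{thm:main-1d}. After passing from Lemma~\ref{lem:var-prod} to the Hoeffding-type identity of Proposition~\ref{prop:var-prod}, each summand is interpreted as a covariance on $\R^{2d}$ with respect to a probability measure whose density contains the factor $f(x)$ (and, in Part~(2), also $g(\underline{x}_{i-1},\overline{x'}_i)$); see Lemma~\ref{lem:cov2-product-mod}. The cross-partial conditions $\partial_{ij}\phi\le 0$ (and $\partial_{ij}\psi\le 0$) are used precisely to show that this \emph{measure} on $\R^{2d}$ satisfies the Holley condition (Proposition~\ref{prop:fkg-FKG}), while the diagonal conditions on $\partial_i(\partial_i\phi/a_i)$ and $\partial_i(\partial_i g/a_i)$, together with the cross conditions on $g$, make the two functions $(x,x')\mapsto\partial_i\phi(x)/a_i(x_i)$ and $(x,x')\mapsto\partial_i g(\underline{x}_{i-1},\overline{x'}_i)/a_i(x'_i)$ coordinate monotone on $\R^{2d}$. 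One then applies FKG on $\R^{2d}$; there is no reduction to dimension one. The global orthogonality $\cov_\mu(f,A_i)=0$ (Part~1) or the evenness of $f,g$ and symmetry of $\mu$ (Part~2, via Lemma~\ref{lem:even}) enters only to kill the ``product of means'' term when rewriting the integral as a covariance on $\R^{2d}$: it is used once and globally, not slice by slice, so your anticipated obstacle does not arise.
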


As before, the choice $a_k\equiv1$ is worth looking at and gives the following corollary.
\begin{coro}\label{cor:Harge-Royen-global}
Let $\mu$ be a product measure on $\R^d$.
\begin{enumerate}
\item Let $f=e^{-\phi}$ and $g$ be two functions on $\R^d$ such that   for all $ 1\leq i,j \leq d$ and  all $x\in \R^d$, 
\begin{equation}\label{eq:cond-l2-phi-g}
\partial_{i,j} \phi(x) \leq 0 \textrm{ and  } \partial_{i,j }g(x)\geq 0. 
\end{equation}
Assume moreover that $f$ is orthogonal to the coordinate functions $x_i$ for all $1\leq i\leq d$,
then 
\[
\cov_\mu(f,g)  \geq 0.
\]
\item Let $f=e^{-\phi}$ and $g=e^{-\psi}$ be two functions on $\R^d$ such that   for all $ 1\leq i,j \leq d$ and  all $x\in \R^d$, 
\begin{equation}\label{eq:cond-l2-phi-psi}
\partial_{i,j} \phi(x) \leq 0 \textrm{ and  } \partial_{i,j }\psi (x)\leq 0. 
\end{equation}
Assume moreover that  the product measure $\mu$ is symmetric and that both  $f$ and $g$ are even, 
then 
\[
\cov_\mu(f,g) \geq 0.
\]
\end{enumerate}
\end{coro}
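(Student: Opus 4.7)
The plan is to obtain Corollary \ref{cor:Harge-Royen-global} as the specialization of Theorem \ref{thm:Harge-Royen-global} to the choice $a_i(x_i) \equiv 1$ for each $1 \leq i \leq d$. With this choice the positivity of $a_i$ is trivial, and the primitive, centered against $\mu_i$, becomes $A_i(x_i) = x_i - \int y\, d\mu_i(y)$. The condition $A_i \in L^2(\mu_i)$ is equivalent to $\mu_i$ having a finite second moment, which is precisely the tacit hypothesis flagged in the notation block for the case $a_k \equiv 1$. Thus all standing hypotheses required to invoke the theorem are in place.

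Next I would check that the differential hypotheses of the theorem collapse to those of the corollary. When $a_i \equiv 1$ one has
\[
\partial_i\!\left(\frac{\partial_i \phi}{a_i}\right) = \partial_{i,i}\phi, \qquad \partial_i\!\left(\frac{\partial_i g}{a_i}\right) = \partial_{i,i} g, \qquad \partial_i\!\left(\frac{\partial_i \psi}{a_i}\right) = \partial_{i,i}\psi,
\]
so that the pair \eqref{eq:cond-ii-phi-g}–\eqref{eq:cond-ij-phi-g} merges into the single sign condition \eqref{eq:cond-l2-phi-g} (covering both $i=j$ and $i\neq j$), and similarly \eqref{eq:cond-ii-phi-psi}–\eqref{eq:cond-ij-phi-psi} merges into \eqref{eq:cond-l2-phi-psi}. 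The orthogonality condition in part (1) of the theorem, namely $\Cov_\mu(f, A_i) = 0$, is, by translation invariance of the covariance, exactly $\Cov_\mu(f, x_i) = 0$, matching the corollary. For part (2), the evenness of each $a_i \equiv 1$ is automatic, so the only genuine symmetry assumptions needed remain the symmetry of $\mu$ and the evenness of $f$ and $g$, which are precisely those listed in the corollary.

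There is no substantive obstacle: all the work sits inside the proof of Theorem \ref{thm:Harge-Royen-global} (the duplication-of-variables identity of Lemma \ref{lem:var-prod} and the resulting covariance representation). The corollary is simply the readable ``unweighted'' reformulation obtained by reading the theorem with trivial weights and replacing the abstract primitives $A_i$ by the coordinate functions $x_i$. The conclusion $\Cov_\mu(f,g) \geq 0$ in each case then transfers verbatim.
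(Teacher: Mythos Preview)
Your proposal is correct and is exactly the paper's approach: the corollary is stated immediately after Theorem~\ref{thm:Harge-Royen-global} as the specialization $a_k\equiv 1$, and your verification that the weighted hypotheses \eqref{eq:cond-ii-phi-g}--\eqref{eq:cond-ij-phi-g} and \eqref{eq:cond-ii-phi-psi}--\eqref{eq:cond-ij-phi-psi} collapse to \eqref{eq:cond-l2-phi-g} and \eqref{eq:cond-l2-phi-psi}, together with the identification of $A_i$ with the centered coordinate $x_i$, is precisely what is needed.
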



\

\textbf{Outline. \ }The paper is organized as follows. The case of the dimension one is investigated in Sections \ref{sec:Andreev-dim1}, \ref{sec:kernel-dim1} and \ref{sec:cheb}.  In Sections \ref{sec:Andreev-dim1} and  \ref{sec:kernel-dim1}, we produce two different  proofs of Theorem \ref{thm:main-1d}. The first one, given in Section \ref{sec:Andreev-dim1}, is based on the use of determinants and the so-called Andreev's formula. The second one, detailed in Section \ref{sec:kernel-dim1}, is based on a covariance identity due to Hoeffding and the use on $\RR^2$ of the classical FKG inequality. In Section \ref{sec:cheb}, we  notice that more structure is actually present in dimension one: the kernel $k$  in Hoeffding's covariance identity is indeed \emph{totally positive} in the sense of Karlin \cite{karlin:book}. Consequently, determinantal covariance inequalities for general Chebyshev systems follow (see Theorem \ref{thm:cheb} for the precise statement). The latter inequalities are also recovered without  using Hoeffding's covariance identity, through a direct approach with determinants and Andreev's formula.

The tensorization method and the proofs of Theorems \ref{thm:Hu-produit-A}, \ref{thm:Hu-tens-incond-A} and \ref{thm:Harge-Royen-tens} are given in Section \ref{sec:tens}, except for the proofs of Theorem \ref{thm:main-1d}(3) and  Theorem \ref{thm:Harge-Royen-tens}(2), that pertain to the hypothesis of quasi-concavity and are detailed in Section \ref{sec:quasi-concave}. Indeed, the method for proving Theorem \ref{thm:main-1d}(3) in dimension one is very specific and independent from the rest of the paper. Theorem  \ref{thm:Harge-Royen-tens}(2) is then  obtained by tensorization.

In addition, a generalization of Hoeffding's covariance identity for product measures, obtained by a duplication argument, is provided in Section \ref{sec:global}. A second proof of Theorem \ref{thm:Hu-produit-A} and the proof of Theorem  \ref{thm:Harge-Royen-global} are then given. 
As explained above, another natural generalization of Hoeffding's covariance identity would be given through the standard semi-group interpolation. Comments on the difficulty of using this covariance representation are provided in Section \ref{sec:comments}. Some possible examples of applications are given in Section \ref{sec:examples}. Finally, in the Appendix, we also prove that that Theorem \ref{thm:harge}(2) is true  for product measures whose marginals are mixtures of centered Gaussian measures. 


\section{A determinantal approach in dimension one} \label{sec:Andreev-dim1}
 This section is devoted to a first proof of Theorem~\ref{thm:main-1d}(1) and (2). The proof is  based on properties of determinants, in particular on the intertwining between determinants and the integral operator, a property known as Andreev's formula. Similar arguments will be used in Section~\ref{sec:cheb} in the more general framework of Chebyshev systems.

\subsection{Convexity and determinants}

\begin{defi}
A pair of real-valued functions $(u,U)$ is said to satisfy Assumption ($\mathcal{C}$) if for any triple $(x_1,x_2,x_3) \in \RR^3$ with $ x_1 < x_2 < x_3$, one has
\begin{equation*}
D(x_1,x_2,x_3) = \left|\begin{array}{ccc} 1 & 1 & 1 \\ u(x_1) & u(x_2) & u(x_3) \\ U(x_1) & U(x_2) & U(x_3) \end{array} \right| \ge 0.
\end{equation*}
\end{defi}

In other terms, the couple $(u,U)$ satisfies Assumption ($\mathcal{C}$) if and only if the triple $(1,u,U)$ forms a Chebyshev system (see Definition~\ref{def:Cheby}). From elementary properties of determinants, it follows that:

\begin{prop} \label{prop:AssCsignD}
Let $(u,U)$ be satisfying Assumption ($\mathcal{C}$) and $D : \RR^3 \rightarrow \RR$ be as defined above. Let $(x_1,x_2,x_3) \in \RR^3$. Let $\sigma \in S_3$ be a permutation of $\{1,2,3\}$ such that $x_{\sigma(1)} \le x_{\sigma(2)} \le x_{\sigma(3)}$. Then
\begin{equation*}
 \ve(\sigma) D(x_1,x_2,x_3) \ge 0
\end{equation*}
\end{prop}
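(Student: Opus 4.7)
The plan is to reduce to the strict-inequality case covered by Assumption $(\mathcal{C})$ via a column-permutation of the determinant, then handle ties separately.

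First I would split into two cases according to whether the $x_i$ are pairwise distinct. In the tie case, if $x_i = x_j$ for some $i \neq j$, two columns of the matrix defining $D(x_1,x_2,x_3)$ are identical, so $D(x_1,x_2,x_3)=0$ and the inequality $\ve(\sigma) D(x_1,x_2,x_3)\ge 0$ holds trivially for any $\sigma$.

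In the main case, assume $x_1, x_2, x_3$ are pairwise distinct, so that a $\sigma \in S_3$ sorting them in increasing order is unique and satisfies $x_{\sigma(1)} < x_{\sigma(2)} < x_{\sigma(3)}$. The key elementary fact from multilinear algebra is that permuting the columns of a $3\times 3$ determinant by $\sigma$ multiplies it by $\ve(\sigma)$; applied to our $D$, this gives
\begin{equation*}
D(x_{\sigma(1)}, x_{\sigma(2)}, x_{\sigma(3)}) = \ve(\sigma)\, D(x_1,x_2,x_3).
\end{equation*}
Since the left-hand side is nonnegative by Assumption $(\mathcal{C})$ applied to the strictly increasing triple $(x_{\sigma(1)}, x_{\sigma(2)}, x_{\sigma(3)})$, we conclude $\ve(\sigma) D(x_1,x_2,x_3) \ge 0$.

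There is essentially no obstacle here: the statement is a book-keeping extension of the definition of Assumption $(\mathcal{C})$ from strictly ordered to arbitrary triples. The only point that deserves care is the weak inequality $x_{\sigma(1)} \le x_{\sigma(2)} \le x_{\sigma(3)}$ in the hypothesis, which allows for non-unique sorting permutations when ties occur; this is harmless because in the presence of ties both sides of the conclusion vanish, so any such $\sigma$ works.
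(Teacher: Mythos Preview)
Your proof is correct and matches the paper's approach: the paper simply states that the proposition follows ``from elementary properties of determinants,'' and your argument (column permutation plus the trivial tie case) is precisely that.
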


\begin{prop} \label{prop:ConvC}
A function $f : \RR \to \RR$ is convex if and only if, for any $x\in \RR$, the pair $(x,f(x))$ satisfies ($\mathcal{C}$).
\end{prop}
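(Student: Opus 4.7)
The plan is to compute the determinant $D(x_1,x_2,x_3)$ explicitly and recognize the condition $D \ge 0$ as the standard three-point inequality characterizing convexity of $f$.

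Expanding the $3 \times 3$ determinant along the first row yields
\[ D(x_1,x_2,x_3) = (x_3 - x_2)\, f(x_1) \;-\; (x_3 - x_1)\, f(x_2) \;+\; (x_2 - x_1)\, f(x_3). \]
For $x_1 < x_2 < x_3$, I would then set $\lambda := (x_3 - x_2)/(x_3 - x_1) \in (0,1)$, so that $x_2 = \lambda x_1 + (1-\lambda) x_3$ and $x_2 - x_1 = (1-\lambda)(x_3-x_1)$. Factoring out the positive quantity $x_3 - x_1$ gives
\[ D(x_1,x_2,x_3) = (x_3 - x_1)\,\bigl[\, \lambda f(x_1) + (1-\lambda) f(x_3) - f(x_2) \,\bigr]. \]

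Since $x_3 - x_1 > 0$, the inequality $D(x_1,x_2,x_3) \ge 0$ is therefore equivalent to
\[ f\bigl(\lambda x_1 + (1-\lambda) x_3\bigr) \;\le\; \lambda f(x_1) + (1-\lambda) f(x_3). \]
As the triple $x_1 < x_3$ and the scalar $\lambda \in (0,1)$ vary arbitrarily, this is exactly the defining inequality for convexity of $f$ on $\RR$. This yields both directions of the equivalence simultaneously, and also shows that the three-point requirement is genuinely equivalent to, not merely implied by, convexity.

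There is no real obstacle to this proof: it reduces to a short algebraic identity together with the classical three-point characterization of convex functions. The only item worth stressing is that $D$ is, up to the positive prefactor $x_3 - x_1$, nothing but the slack in Jensen's inequality applied to $x_2$ seen as a convex combination of $x_1$ and $x_3$, which makes the equivalence with $(\mathcal{C})$ transparent.
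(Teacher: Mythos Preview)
Your proof is correct and follows essentially the same approach as the paper: both expand the determinant and recognize the resulting expression as a standard characterization of convexity. The paper phrases the conclusion as the slope inequality $\frac{f(x_3)-f(x_2)}{x_3-x_2} \ge \frac{f(x_2)-f(x_1)}{x_2-x_1}$, while you phrase it as the Jensen inequality $f(\lambda x_1 + (1-\lambda)x_3) \le \lambda f(x_1) + (1-\lambda) f(x_3)$; these are of course the same computation up to regrouping.
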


\begin{proof} Take $(x_1,x_2,x_3) \in \RR^3$ with $x_1 < x_2 < x_3$. Expanding the determinant $D(x_1,x_2,x_3)$ gives:
\begin{equation*}
D(x_1,x_2,x_3) = (x_2-x_1)(f(x_3)-f(x_2)) - (x_3-x_2)(f(x_2)-f(x_1)).
\end{equation*}
Dividing by the positive quantity $(x_3-x_2)(x_2-x_1)>0$, we obtain that $D(x_1,x_2,x_3) \ge 0$ if and only if
\begin{equation*}
\frac{f(x_3)-f(x_2)}{x_3-x_2} \ge \frac{f(x_2)-f(x_1)}{x_2-x_1},
\end{equation*}
which is the slope inequality equivalent to convexity of $f$.
\end{proof}

\medskip

\begin{coro}
If $U$ is an increasing bijection between $\R$ and some interval $I$, then $(1,u,U)$ satisfies ($\mathcal{C}$) if and only if $u \circ U^{-1}$ is concave on $I$.
\end{coro}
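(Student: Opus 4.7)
The plan is to reduce the statement to Proposition \ref{prop:ConvC} by the substitution $y = U(x)$. Since $U : \R \to I$ is an increasing bijection, the change of variables $y_i = U(x_i)$ sets up an order-preserving bijection between triples $(x_1,x_2,x_3)$ with $x_1 < x_2 < x_3$ and triples $(y_1,y_2,y_3) \in I^3$ with $y_1 < y_2 < y_3$. Writing $v := u \circ U^{-1}$, we have $u(x_i) = v(y_i)$ and $U(x_i) = y_i$, so
\[
D(x_1,x_2,x_3) = \left|\begin{array}{ccc} 1 & 1 & 1 \\ v(y_1) & v(y_2) & v(y_3) \\ y_1 & y_2 & y_3 \end{array}\right|.
\]

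Next I would swap the last two rows of this determinant, which flips the sign and places it in exactly the form appearing in Proposition \ref{prop:ConvC} applied to the function $v$:
\[
D(x_1,x_2,x_3) = -\left|\begin{array}{ccc} 1 & 1 & 1 \\ y_1 & y_2 & y_3 \\ v(y_1) & v(y_2) & v(y_3) \end{array}\right|.
\]
By Proposition \ref{prop:ConvC}, the determinant on the right is $\ge 0$ for all $y_1 < y_2 < y_3$ in $I$ if and only if $v$ is convex on $I$. Taking the sign flip into account, $D(x_1,x_2,x_3) \ge 0$ for all $x_1 < x_2 < x_3$ is therefore equivalent to $v = u \circ U^{-1}$ being concave on $I$.

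There is no real obstacle here; the argument is essentially a one-line change of variables followed by a row swap. The only point to be a bit careful about is that $U$ being an \emph{increasing} bijection is what guarantees that the ordering $x_1 < x_2 < x_3$ is preserved as $y_1 < y_2 < y_3$, so that Proposition \ref{prop:ConvC} can be invoked on the correct domain of triples in $I$. If $U$ were decreasing, the ordering of the $y_i$ would be reversed and one would need an additional permutation (yielding the opposite conclusion, namely convexity of $v$), which is a good consistency check but not needed for the statement as given.
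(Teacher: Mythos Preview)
Your proof is correct and takes essentially the same approach as the paper: perform the change of variables $y_i = U(x_i)$ (order-preserving since $U$ is an increasing bijection), then use an elementary row operation to put the determinant in the form of Proposition~\ref{prop:ConvC}. The only cosmetic difference is that the paper moves the sign onto the function (writing the last row as $-u\circ U^{-1}(y_i)$) rather than swapping rows, which is of course equivalent.
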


\begin{proof}
We notice that, for $x_1 < x_2 < x_3$ we have $$D(x_1,x_2,x_3) = D(U^{-1}(y_1),U^{-1}(y_2),U^{-1}(y_3))$$ for some triple $y_1<y_2<y_3 \in I$. Elementary properties of determinants then give:
\begin{equation*}
D(x_1,x_2,x_3) =  \left|\begin{array}{ccc} 1 & 1 & 1 \\ y_1 & y_2 & y_3 \\ (-u \circ U^{-1})(y_1) & (-u \circ U^{-1})(y_2) & (-u \circ U^{-1})(y_3) \end{array} \right|
\end{equation*}
and we conclude by applying Proposition~\ref{prop:ConvC}.
\end{proof}

\

Let us now consider some positive function $f$. Let $F$ be a primitive of $f$. It is known (see \cite{bobkov:extremal}) that $f$ is log-concave if and only if $f \circ F^{-1}$ is concave. From the above, we deduce the following proposition.

\begin{prop} \label{prop:logconcC}
A positive function $f$ is log-concave if and only if the pair $(f,F)$ satisfies ($\mathcal{C}$).
\end{prop}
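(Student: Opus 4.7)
The statement is essentially a direct consequence of the preceding corollary together with a classical reformulation of log-concavity via the primitive, so the plan is short.

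First I would observe that since $f$ is positive, its primitive $F$ is strictly increasing on $\R$, and therefore realizes an increasing bijection between $\R$ and the (open) interval $I := F(\R)$. This is exactly the setup required to apply the corollary immediately above the statement, taking $u = f$ and $U = F$. That corollary tells us that the pair $(f,F)$ satisfies Assumption $(\mathcal{C})$ if and only if $f \circ F^{-1}$ is concave on $I$.

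Next, I would invoke the characterization (attributed in the paper to \cite{bobkov:extremal}) that a positive function $f$ is log-concave if and only if the function $f \circ F^{-1}$ is concave. For self-containedness, the quick verification is: setting $h := f \circ F^{-1}$ and differentiating twice, one gets
\begin{equation*}
h''(y) \;=\; \frac{f''(x)f(x) - f'(x)^2}{f(x)^3}\bigg|_{x = F^{-1}(y)},
\end{equation*}
so $h'' \le 0$ is equivalent to $(\log f)'' = (f''f - (f')^2)/f^2 \le 0$, i.e.\ to $f$ being log-concave. Concatenating the two equivalences yields the proposition.

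There is no real obstacle here: the only step that requires a little care is checking that $F$ is indeed a bijection (which uses $f>0$), and handling the case where $f$ fails to be $C^2$, which can be dealt with by the standard approximation argument (convolving with a mollifier and passing to the limit), or simply by rephrasing the characterization of log-concavity through difference quotients rather than second derivatives.
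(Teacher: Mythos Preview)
Your proof is correct and follows exactly the same route as the paper: apply the preceding corollary (with $u=f$, $U=F$, using that $F$ is an increasing bijection onto its image since $f>0$) to reduce to the concavity of $f\circ F^{-1}$, and then invoke the Bobkov characterization of log-concavity. The extra verification via $h''$ is a nice addition but not in the paper, which simply cites \cite{bobkov:extremal}.
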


\subsection{An Andreev-type formula}
 A key point in this approach is the following Andreev-type formula which exchanges expectation and determinants. 
\begin{prop} \label{prop:Andreev}
Let $(f_i)_{1 \le i \le n}$ and $(g_i)_{1 \le i \le n}$ be two $n$-uples of functions in $L^2(\mu)$. We have:
$$
\det\left(\EE_\mu \left[f_i(X) g_j(X) \right]\right) = \frac{1}{n!} \EE_{\mu \otimes \cdots \otimes \mu} \left[\det\left(f_i(X_j)\right)\det\left(g_i(X_j)\right)\right].
$$
\end{prop}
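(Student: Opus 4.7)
My plan is to prove the formula by expanding the left-hand determinant via the Leibniz formula, rewriting the resulting sum of products of integrals as a single integral over $\mu^{\otimes n}$, and then symmetrizing in the remaining indices.

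First I would write
\[
\det\bigl(\EE_\mu[f_i(X)g_j(X)]\bigr)=\sum_{\sigma\in S_n}\varepsilon(\sigma)\prod_{i=1}^n\int f_i(x)g_{\sigma(i)}(x)\,d\mu(x).
\]
Turning the $n$-fold product of integrals into a single integral on $(\R^d)^n$ and exchanging sum and integral gives
\[
\det\bigl(\EE_\mu[f_i(X)g_j(X)]\bigr)=\int\Bigl(\prod_{i=1}^n f_i(x_i)\Bigr)\Bigl(\sum_{\sigma\in S_n}\varepsilon(\sigma)\prod_{i=1}^n g_{\sigma(i)}(x_i)\Bigr)\,d\mu^{\otimes n}(x_1,\ldots,x_n).
\]
Reindexing $\sigma\mapsto\sigma^{-1}$, the inner sum is precisely the Leibniz expansion of $\det\bigl(g_i(x_j)\bigr)_{1\le i,j\le n}$, so that
\[
\det\bigl(\EE_\mu[f_i(X)g_j(X)]\bigr)=\int\Bigl(\prod_{i=1}^n f_i(x_i)\Bigr)\det\bigl(g_i(x_j)\bigr)\,d\mu^{\otimes n}.
\]

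Next I would symmetrize in the $f_i$'s. For any $\pi\in S_n$, by relabeling $x_i\leftrightarrow x_{\pi(i)}$ (which leaves $\mu^{\otimes n}$ invariant) and observing that permuting the columns of the matrix $\bigl(g_i(x_j)\bigr)$ multiplies its determinant by $\varepsilon(\pi)$, one obtains
\[
\det\bigl(\EE_\mu[f_i(X)g_j(X)]\bigr)=\varepsilon(\pi)\int\Bigl(\prod_{i=1}^n f_i(x_{\pi(i)})\Bigr)\det\bigl(g_i(x_j)\bigr)\,d\mu^{\otimes n}.
\]
Averaging this identity over $\pi\in S_n$ and using that $\sum_\pi\varepsilon(\pi)\prod_i f_i(x_{\pi(i)})=\det\bigl(f_i(x_j)\bigr)$ yields the announced formula with the prefactor $1/n!$.

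The proof is essentially bookkeeping on permutations, so there is no substantial obstacle; the only point to be a little careful with is the reindexing $\sigma\mapsto\sigma^{-1}$ needed to turn the Leibniz sum into the determinant $\det(g_i(x_j))$, and checking that column permutations produce the correct sign $\varepsilon(\pi)$ when symmetrizing. Integrability of all the integrands, which is implicit in $(f_i),(g_i)\in L^2(\mu)^n$, justifies all the Fubini exchanges.
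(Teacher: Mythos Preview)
Your proof is correct and follows essentially the same route as the paper: Leibniz expansion plus Fubini plus recognition of the resulting permutation sums as determinants. The only cosmetic difference is that the paper starts from the double-sum identity $n!\det(A)=\sum_{\sigma,\sigma'}\varepsilon(\sigma)\varepsilon(\sigma')\prod_i A_{\sigma(i),\sigma'(i)}$ and factorizes it directly inside the expectation, whereas you first extract $\det(g_i(x_j))$ via a single Leibniz expansion and then recover $\det(f_i(x_j))$ by an explicit symmetrization over $\pi\in S_n$; these two organizations amount to the same manipulation.
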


\begin{proof} An elementary formula for determinant asserts that: 
\begin{equation*} 
n! \det\left(\EE_\mu \left[f_i(X) g_j(X) \right]\right) = \sum_{\sigma,\sigma' \in S_n} \ve(\sigma) \ve(\sigma') \prod_{i=1}^n \EE_\mu \left[f_{\sigma(i)}(X) g_{\sigma'(i)}(X) \right].
\end{equation*}
Fubini's theorem allows us to write: 
\begin{equation*}
n! \det\left(\EE_\mu \left[f_i(X) g_j(X) \right]\right) = \sum_{\sigma,\sigma' \in S_n} \ve(\sigma) \ve(\sigma') \EE_{\mu \otimes \cdots \otimes \mu} \left[ \prod_{i=1}^n  f_{\sigma(i)}(X_i) g_{\sigma'(i)}(X_i) \right].
\end{equation*}
We thus have: 
\begin{eqnarray*}
n! \det\left(\EE_\mu \left[f_i(X) g_j(X) \right]\right) &=& \EE_{\mu \otimes \cdots \otimes \mu} \left[ \sum_{\sigma,\sigma' \in S_n} \ve(\sigma) \ve(\sigma') f_{\sigma(i)}(X_i) g_{\sigma'(i)}(X_i) \right] \\
&=& \EE_{\mu \otimes \cdots \otimes \mu} \left[ \left( \sum_{\sigma \in S_n} \ve(\sigma) f_{\sigma(i)}(X_i) \right) \left( \sum_{\sigma' \in S_n} \ve(\sigma') g_{\sigma'(i)}(X_i)\right) \right] \\
&=& \EE_{\mu \otimes \cdots \otimes \mu} \left[ \det\left(f_j(X_i)\right)\det\left(g_j(X_i)\right) \right] \\
&=& \EE_{\mu \otimes \cdots \otimes \mu} \left[ \det\left(f_i(X_j)\right)\det\left(g_i(X_j)\right) \right]. 
\end{eqnarray*}
\end{proof}
Note that with the particular choice $n=2$, $f_1=1$, $f_2=f$, $g_1=1$, $g_2=g$, Proposition~\ref{prop:Andreev}   gives the so-called "Chebyshev's other inequality":

\begin{prop}[Chebyshev] \label{prop:Chebyshev1d}
If $f,g \in L^2(\mu)$ are both non-increasing or both non-decreasing, then $\cov_\mu(f,g) \ge 0$. 
\end{prop}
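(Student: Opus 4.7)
The plan is to apply Proposition~\ref{prop:Andreev} exactly as suggested by the remark preceding the statement, with $n=2$, $f_1 = g_1 = 1$, $f_2 = f$, $g_2 = g$. First I would unpack the left-hand side: the $2\times 2$ matrix $\bigl(\E_\mu[f_i(X)g_j(X)]\bigr)$ is
\[
\begin{pmatrix} 1 & \E_\mu[g(X)] \\ \E_\mu[f(X)] & \E_\mu[f(X)g(X)] \end{pmatrix},
\]
whose determinant is precisely $\cov_\mu(f,g)$.

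Next I would unpack the right-hand side with $X_1, X_2$ i.i.d.\ of law $\mu$. The two $2\times 2$ determinants inside the expectation are
\[
\det\begin{pmatrix}1 & 1 \\ f(X_1) & f(X_2)\end{pmatrix} = f(X_2)-f(X_1),
\]
and analogously $g(X_2)-g(X_1)$. Proposition~\ref{prop:Andreev} therefore gives the Hoeffding-type identity
\[
\cov_\mu(f,g) = \frac{1}{2}\, \E_{\mu\otimes\mu}\bigl[(f(X_2)-f(X_1))(g(X_2)-g(X_1))\bigr].
\]

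To conclude, I would observe that when $f$ and $g$ are both non-decreasing (resp.\ both non-increasing), the quantities $f(X_2)-f(X_1)$ and $g(X_2)-g(X_1)$ always carry the same sign pointwise on $\R^2$, so their product is everywhere nonnegative; taking expectation yields $\cov_\mu(f,g)\ge 0$. There is no real obstacle here — the entire argument is a direct specialization of the Andreev formula, and the only thing to check carefully is the sign of the $2\times 2$ minors so that the ``same monotonicity'' hypothesis translates into a pointwise nonnegative integrand.
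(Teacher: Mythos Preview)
Your proof is correct and follows exactly the approach indicated in the paper: the proposition is stated immediately after the remark that the choice $n=2$, $f_1=g_1=1$, $f_2=f$, $g_2=g$ in Proposition~\ref{prop:Andreev} yields Chebyshev's inequality, and you have simply carried out that specialization explicitly.
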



\subsection{A first proof of Theorem~\ref{thm:main-1d}}

The first proof of Theorem~\ref{thm:main-1d} will be deduced from the following more general result.

\begin{theo} \label{th:CovaC}
Let $(u,U)$ and $(v,V)$ be two pairs of functions satisfying Assumption ($\mathcal{C}$). Then
\begin{equation*}
\cov_\mu(U,V) \cov_\mu(u,v) \ge \cov_\mu(u,V) \cov_\mu(U,v).
\end{equation*}
\end{theo}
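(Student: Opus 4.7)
My plan is to recast the quantity $\cov_\mu(U,V) \cov_\mu(u,v) - \cov_\mu(u,V) \cov_\mu(U,v)$ as a single $3\times 3$ determinant, then apply Andreev's formula (Proposition~\ref{prop:Andreev}) to convert it into an integral, and finally verify that Assumption $(\mathcal{C})$ makes the integrand pointwise non-negative.

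First, I would introduce the matrix
\[
M = \bigl(\EE_\mu[f_i(X) g_j(X)]\bigr)_{1 \le i,j \le 3}
\]
with $f_1 = g_1 = 1$, $f_2 = u$, $f_3 = U$, $g_2 = v$, $g_3 = V$. A direct expansion along the first row (or a standard manipulation subtracting multiples of the first row/column to produce centered entries) shows that
\[
\det(M) = \cov_\mu(u,v)\cov_\mu(U,V) - \cov_\mu(u,V)\cov_\mu(U,v),
\]
which is exactly the quantity we want to prove non-negative.

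Second, I would apply Proposition~\ref{prop:Andreev} with $n=3$ to write
\[
\det(M) = \frac{1}{6}\, \EE_{\mu^{\otimes 3}}\!\left[\,\Delta_{u,U}(X_1,X_2,X_3)\cdot \Delta_{v,V}(X_1,X_2,X_3)\,\right],
\]
where $\Delta_{u,U}(x_1,x_2,x_3)$ and $\Delta_{v,V}(x_1,x_2,x_3)$ denote the two $3\times 3$ determinants built from the rows $(1,1,1)$, $(u(x_j))$, $(U(x_j))$ and $(1,1,1)$, $(v(x_j))$, $(V(x_j))$ respectively.

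The final step is the key observation that, under Assumption $(\mathcal{C})$, the integrand is pointwise $\ge 0$. Given any triple $(x_1,x_2,x_3)$, let $\sigma \in S_3$ be a permutation realizing $x_{\sigma(1)} \le x_{\sigma(2)} \le x_{\sigma(3)}$. By the antisymmetry of the determinant in its columns,
\[
\Delta_{u,U}(x_1,x_2,x_3) = \ve(\sigma)\, D_{u,U}(x_{\sigma(1)},x_{\sigma(2)},x_{\sigma(3)}),
\]
and by Proposition~\ref{prop:AssCsignD} the right-hand factor is non-negative; the same holds for $\Delta_{v,V}$. Multiplying them gives $\ve(\sigma)^2 = 1$ times a product of two non-negative numbers, hence a non-negative quantity. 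Integrating preserves the sign, so $\det(M) \ge 0$, which is the inequality claimed.

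I do not anticipate any real obstacle here: the structural content lies entirely in Andreev's formula and in the sign control of Proposition~\ref{prop:AssCsignD}, both already established. The only point to be careful about is the bookkeeping in the expansion of $\det(M)$ — i.e., checking that the mixed terms in the expansion really assemble into the covariance combination and not some shifted version — but this is a routine verification using the identities $\cov_\mu(u,v)=\EE(uv)-\EE u\,\EE v$ and their analogues.
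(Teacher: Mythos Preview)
Your proposal is correct and follows essentially the same approach as the paper: both rewrite the covariance combination as the $3\times 3$ determinant $\det\bigl(\EE_\mu[f_i g_j]\bigr)$ with $f_1=g_1=1$, $(f_2,f_3)=(u,U)$, $(g_2,g_3)=(v,V)$, apply Andreev's formula (Proposition~\ref{prop:Andreev}) with $n=3$, and use Assumption~$(\mathcal{C})$ together with Proposition~\ref{prop:AssCsignD} to obtain pointwise non-negativity of the integrand. The only cosmetic difference is that the paper presents the determinant identity starting from the $2\times 2$ covariance determinant and expanding, whereas you start from the $3\times 3$ matrix and reduce; the substance is identical.
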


\begin{proof} We want to show that $D \ge 0$, where 
\begin{equation*}
D = \left|\begin{array}{cc} \cov_\mu(u,v) & \cov_\mu(u,V) \\ \cov_\mu(U,v) & \cov_\mu(U,V) \end{array}\right|.
\end{equation*}
But we also have:
\begin{equation*}
D = \left|\begin{array}{ccc} 1 & \EE_\mu[v] & \EE_\mu[V] \\ \EE_\mu[u] & \EE_\mu[u v] & \EE_\mu[u V] \\ \EE_\mu[U] & \EE_\mu[U v] & \EE_\mu[U V]\end{array} \right|.
\end{equation*}
The latter equality can be proven by simply expanding the determinant. We now apply Proposition~\ref{prop:Andreev} with $f_1=1,f_2=u,f_3=U$ and $g_1=1,g_2=v,g_3=V$. This gives
\begin{equation*}
D = \int_{\RR^3} \left|\begin{array}{ccc} 1 & 1 & 1 \\ u(x_1) & u(x_2) & u(x_3) \\ U(x_1) & U(x_2) & U(x_3) \end{array} \right|\left|\begin{array}{ccc} 1 & 1 & 1 \\ v(x_1) & v(x_2) & v(x_3) \\ V(x_1) & V(x_2) & V(x_3) \end{array} \right| d\mu(x_1) d\mu(x_2) d\mu(x_3).
\end{equation*}
Let $(x_1,x_2,x_3) \in \RR^3$ and $\sigma \in S_3$ be such that $x_{\sigma(1)} < x_{\sigma(2)} < x_{\sigma(3)}$. As both pairs $(u,U)$ and $(v,V)$ satisfy ($\mathcal{C}$), we apply Proposition~\ref{prop:AssCsignD} as follows,
\begin{eqnarray*}
\left|\begin{array}{ccc} 1 & 1 & 1 \\ u(x_1) & u(x_2) & u(x_3) \\ U(x_1) & U(x_2) & U(x_3) \end{array} \right|\left|\begin{array}{ccc} 1 & 1 & 1 \\ v(x_1) & v(x_2) & v(x_3) \\ V(x_1) & V(x_2) & V(x_3) \end{array} \right| \\ = \left(\ve(\sigma) \left|\begin{array}{ccc} 1 & 1 & 1 \\ u(x_1) & u(x_2) & u(x_3) \\ U(x_1) & U(x_2) & U(x_3) \end{array} \right|\right) \left(\ve(\sigma)\left|\begin{array}{ccc} 1 & 1 & 1 \\ v(x_1) & v(x_2) & v(x_3) \\ V(x_1) & V(x_2) & V(x_3) \end{array} \right|\right) \ge 0,
\end{eqnarray*}
from which we deduce $D \ge 0$, as wanted. 
\end{proof}

\medskip

As we prove now, points (1) and (2) of Theorem~\ref{thm:main-1d} are particular cases of Theorem~\ref{th:CovaC}, for a suitable choice for the pairs $(u,U)$ and $(v,V)$.

\medskip

\begin{proof}[Proof of Theorem~\ref{thm:main-1d}(1) and (2)] 

The first item is a direct consequence of Theorem~\ref{th:CovaC} and Proposition \ref{prop:ConvC} for the particular choice $u(x)=x$, $U(x)=f(x)$, $v(x)=x$ and $V(x) = g(x)$.

For the second item, we set $u(x) = f(x)$, $U(x) = \int_0^x f(t) dt$, $v(x)=x$, $V(x)=g(x)$. Propositions~\ref{prop:logconcC} and~\ref{prop:ConvC} show that the pairs $(u,U)$ and $(v,V)$ both satisfy Assumption ($\mathcal{C}$). By Theorem~\ref{th:CovaC}, we thus have:
\begin{equation*}
\cov_\mu(f,x) \cov_\mu(U,g) \ge \cov_\mu(f,g) \cov_\mu(U,x).
\end{equation*}
The orthogonality assumption gives $\cov_\mu(f,x)=0$. Moreover, as $f$ is non-negative, the function $U$ is non-decreasing. Proposition~\ref{prop:Chebyshev1d} then gives $\cov_\mu(U,x) \ge 0$, so that the inequlity $\cov_\mu(f,g)\le 0$ holds, as desired. 

\end{proof}

\section{The  Hoeffding covariance identity approach in dimension one}
\label{sec:kernel-dim1}
 This section is devoted to a second proof of Theorem~\ref{thm:main-1d}(1) and (2). The proof will follow from the Hoeffding covariance identity and the use of the FKG inequality for a new probability measure on $\R^2$. A key point is that the kernel of  the Hoeffding representation is  \emph{totally positive}. 

\subsection{Hoeffding's covariance identity} 
We start by recalling the following representation formula for the covariance, which is a consequence of a slightly more general covariance identity due to Hoeffding. See \cite{saumwellner2017efron} for more details about Hoeffding's covariance identity.

\begin{theo}\label{thm:covA} Let $\mu$ be a probability measure
on $\R$ and denote by $F_{\mu}$ its cumulative distribution function,
then for all functions $f$ and $g$ in $L^2(\mu)$ and absolutely continuous, one has
\begin{equation}
\Cov_{\mu}(f,g)=\iint f'(x)k_\mu(x,y)g'(y)dxdy,\label{eq:cov-k}
\end{equation}
with 
\[
k_\mu(x,y)=F_{\mu}(x\wedge y)-F_{\mu}(x)F_{\mu}(y)
\]
and $x\wedge y=\min(x,y)$. \end{theo}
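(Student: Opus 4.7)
The plan is to start from the classical doubling identity
\[
\Cov_\mu(f,g) = \frac{1}{2} \iint_{\R^2} (f(x)-f(y))(g(x)-g(y)) d\mu(x) d\mu(y),
\]
obtained by introducing two independent copies $X,Y$ of $\mu$ and expanding. Absolute continuity of $f$ and $g$ then allows the clean representation
\[
f(x) - f(y) = \int_\R f'(s) \bigl( \one_{s < x} - \one_{s < y} \bigr) ds,
\]
and similarly for $g(x)-g(y)$. Plugging both expressions into the doubling identity and swapping the order of integration brings out $f'(s) g'(t)$ as an outer weight and leaves a purely probabilistic kernel
\[
A(s,t) = \EE\bigl[(\one_{s<X} - \one_{s<Y})(\one_{t<X} - \one_{t<Y})\bigr]
\]
to be computed.

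The core computation is to evaluate $A(s,t)$. Expanding the product yields four terms: the two diagonal ones $\one_{s<X,\,t<X}$ and $\one_{s<Y,\,t<Y}$ each integrate to $1 - F_\mu(s\vee t)$, while the two crossed ones $\one_{s<X,\,t<Y}$ and $\one_{s<Y,\,t<X}$ each integrate, by independence, to $(1-F_\mu(s))(1-F_\mu(t))$. Hence
\[
A(s,t) = 2\bigl(1 - F_\mu(s\vee t)\bigr) - 2\bigl(1-F_\mu(s)\bigr)\bigl(1-F_\mu(t)\bigr),
\]
which simplifies via the elementary identity $F_\mu(s)+F_\mu(t)=F_\mu(s\wedge t)+F_\mu(s\vee t)$ to $A(s,t)=2k_\mu(s,t)$. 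Dividing by $2$ yields the claimed formula.

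The only delicate point is justifying the two applications of Fubini (swapping $d\mu(x) d\mu(y)$ with $ds\,dt$, then handling the outer double integral in $(s,t)$). The kernel satisfies $0 \leq k_\mu(s,t) \leq 1/4$, and when tested against $f(x)=g(x)=x$ the formula predicts $\iint k_\mu(s,t)\,ds\,dt = \Var(\mu)$, so finite second moments are the natural integrability hypothesis. The cleanest way is to first prove the identity for bounded Lipschitz $f,g$ with compactly supported derivatives (where Fubini applies immediately) and then extend to the stated class by truncation and a density argument in $L^2(\mu)$, using that $f,g \in L^2(\mu)$ is precisely what controls the $\iint f'(s) k_\mu(s,t) g'(t)\,ds\,dt$ integral via Cauchy--Schwarz.
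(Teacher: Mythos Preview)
Your argument is correct. The paper does not actually prove Theorem~\ref{thm:covA}; it is stated as a known result with a reference to the literature on Hoeffding's covariance identity. That said, your approach---the doubling identity, writing $f(x)-f(y)$ as an integral of $f'$ against a difference of indicators, and computing the resulting kernel---is precisely the method the paper employs later in the proof of Proposition~\ref{prop:var-prod}, which generalizes Theorem~\ref{thm:covA} to product measures on $\R^d$. The kernel computation you carry out (yielding $A(s,t)=2k_\mu(s,t)$) appears there almost verbatim in the one-dimensional component of that proof.
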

For simplicity, when it is clear from context, we shall write $k=k_\mu$ in the sequel.
We now recall some properties of the kernel $k:\R^{2}\to[0,+\infty)$.
Taking $f(\cdot)=\one_{[x,\infty[}(\cdot)$ and $g(\cdot)=\one_{[y,\infty[}(\cdot)$,
one sees that the kernel $k$ is necessarily unique and can also be
written 
\[
k(x,y)=\Cov_{\mu}\left(1_{\left\{ X\leq x\right\} },1_{\left\{ X\leq y\right\} }\right).
\]

This kernel is non-negative, bounded, continuous if $\mu$ is assumed
to be a continuous measure, but it is not differentiable on the line
$y=x$.
Let us emphasize the fact that the kernel $k$ is \emph{totally positive}
in the sense of Karlin \cite{karlin:book}. This result should be classical but we could not find a reference of it in
the literature.

\begin{theo}\label{thm:totpos} For all $n\geq2$, $s_{1}\leq\dots\leq s_{n}\in\R$
and $t_{1}\leq\dots\leq t_{n}\in\R$, 
\[
\det\left(k(s_{i},t_{j})\right)_{1\leq i,j\leq n}\geq0.
\]
\end{theo}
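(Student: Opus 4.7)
My plan is to deduce total positivity from Andreev's formula (Proposition~\ref{prop:Andreev}), in the spirit of the proof of Theorem~\ref{th:CovaC}. The idea is to ``lift'' the $n\times n$ determinant of $k$-values to an $(n+1)\times(n+1)$ determinant of plain $L^2$ inner products, by exploiting the decomposition $k(s,t)=F_\mu(s\wedge t)-F_\mu(s)F_\mu(t)$.

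I will set $f_0 \equiv g_0 \equiv 1$ and, for $1\le i \le n$, $f_i(x)=\bone_{x\le s_i}$, $g_i(x)=\bone_{x\le t_i}$. Then the $(n+1)\times(n+1)$ matrix $(\EE_\mu[f_i g_j])_{0\le i,j\le n}$ has top-left entry $1$, top row $(1,F_\mu(t_1),\ldots,F_\mu(t_n))$, first column $(1,F_\mu(s_1),\ldots,F_\mu(s_n))^{\top}$, and interior entries $F_\mu(\min(s_i,t_j))$. Subtracting $F_\mu(s_i)$ times the top row from row $i$ for each $i\ge 1$ kills the rest of the first column and turns the interior block into $(k(s_i,t_j))_{1\le i,j\le n}$, so that expansion along the first column gives
\[
\det\bigl(\EE_\mu[f_i g_j]\bigr)_{0\le i,j\le n}=\det\bigl(k(s_i,t_j)\bigr)_{1\le i,j\le n}.
\]
Proposition~\ref{prop:Andreev} then yields
\[
\det\bigl(k(s_i,t_j)\bigr)=\frac{1}{(n+1)!}\,\EE_{\mu^{\otimes(n+1)}}\bigl[\det(f_i(X_j))\,\det(g_i(X_j))\bigr],
\]
reducing the claim to the pointwise non-negativity of the integrand.

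For the pointwise inequality, I would fix a realization $(x_0,\ldots,x_n)$ with pairwise distinct entries. The sequence $s_1\le\cdots\le s_n$ partitions $\R$ into ordered intervals $I_0^s<\cdots<I_n^s$, and similarly $t_1\le\cdots\le t_n$ yields $I_0^t<\cdots<I_n^t$. Elementary row operations on the matrix $(f_i(x_j))$ --- namely the telescoping $\bone_{\cdot\le s_{i+1}}-\bone_{\cdot\le s_i}=\bone_{I_i^s}$ together with $1-\sum_{i<n}\bone_{I_i^s}=\bone_{I_n^s}$ --- followed by a single cyclic row permutation of sign $(-1)^n$, transform this matrix into the incidence matrix $\bigl(\bone_{x_j\in I_i^s}\bigr)$; the analogous procedure reduces $(g_i(x_j))$ to $\bigl(\bone_{x_j\in I_i^t}\bigr)$. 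An incidence determinant of this form vanishes unless the $x_j$'s lie one per interval, in which case, because both the $x_j$'s and the intervals are totally ordered on $\R$, the incidence is realized by the inverse $\pi^{-1}$ of the sorting permutation of the $x_j$'s --- \emph{the same} permutation for the $s$- and the $t$-partitions. Hence both determinants share the common sign $\ve(\pi^{-1})$, the factors $(-1)^n$ cancel in their product, and $\det(f_i(x_j))\det(g_i(x_j))\ge 0$. Ties (repeated $s_i$ or $t_j$, or coincident $x_j$ when $\mu$ has atoms) collapse an interval or duplicate a column and force at least one determinant to vanish, so the inequality is trivial there. I expect the main obstacle to be precisely this combinatorial step: justifying that the same sorting permutation $\pi$ appears for both incidence matrices, which is the place where the monotonicity of $(s_i)$ and of $(t_j)$ enters essentially.
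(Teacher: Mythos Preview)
Your argument is correct. The lift to the $(n+1)\times(n+1)$ matrix via $f_0=g_0=1$ and indicator functions, together with the row reduction to the $k$-block, is exactly the identity proved more generally in equation~\eqref{eq:egalite-det}; Andreev then reduces the question to the pointwise sign of $\det(f_i(X_j))\det(g_i(X_j))$. Your combinatorial step is sound: after the telescoping row operations and the common cyclic shift of sign $(-1)^n$ on both matrices, each incidence matrix $(\bone_{x_j\in I_i})$ is either singular (two $x_j$'s in the same interval) or the permutation matrix of the rank permutation of $(x_0,\dots,x_n)$, and crucially this rank permutation depends only on the order of the $x_j$'s on $\R$, not on the partition --- this is precisely where the monotonicity of $(s_i)$ and $(t_j)$ enters. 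Hence both nonzero determinants carry the same sign $\ve(\pi^{-1})$, and the $(-1)^n$'s cancel. The tie cases are handled exactly as you say.

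The paper takes a completely different route: it observes that $k(x,y)=F_\mu(x)(1-F_\mu(y))$ for $x\le y$ and symmetrically for $x\ge y$, with $F_\mu$ non-decreasing and $1-F_\mu$ non-increasing, and then invokes the classical fact (Karlin, \emph{Total Positivity}, Corollary~3.1) that kernels with this Green-matrix structure are totally positive. Your proof is longer but entirely self-contained within the paper's own toolkit --- it uses only Proposition~\ref{prop:Andreev} and elementary combinatorics, avoiding any appeal to the theory of total positivity. It also makes transparent that the result is a special instance of the Chebyshev-system mechanism of Section~\ref{sec:cheb}: the family $(1,\bone_{\cdot\le s_1},\dots,\bone_{\cdot\le s_n})$ is, after your row operations, a disguised Chebyshev system, and your argument is essentially the proof of Theorem~\ref{thm:cheb} specialized to these indicators. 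The paper's approach, by contrast, is quicker and places the result in its natural classical context.
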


\begin{proof} The proof
follows from Theorem 3.1 in Karlin \cite{karlin:book}, or Theorem 4.2
in Pinkus \cite{pinkus:book}, by showing that the matrix $\left(k(s_{i},t_{j})\right)_{1\leq i,j\leq n}$
is a Green matrix. One can also directly use Corollary 3.1 in Karlin \cite{karlin:book}
by writing 
\[
k(x,y)=\left\{ \begin{array}{l}
\phi(x)\psi(y)\textrm{ if }x\geq y\\
\psi(x)\phi(y)\textrm{ if }x\leq y
\end{array}\right.
\]
with $\phi(x)=F(x)$ non-decreasing and $\psi(y)=1-F(y)$ non-increasing.
\end{proof}

\medskip
In the case $n=2$, Theorem \ref{thm:totpos} provides the following inequality.
\begin{coro} \label{cor:totpos2}
For all $s_{1}\leq s_{2}$ and $t_{1}\leq t_{2}$, 
\begin{equation}
k(s_{1},t_{1})k(s_{2},t_{2})\geq k(s_{1},t_{2})k(s_{2},t_{1}).\label{eq:totpos2}
\end{equation}
\end{coro}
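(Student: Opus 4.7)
The corollary is simply the special case $n = 2$ of Theorem \ref{thm:totpos}. My plan is first to invoke that theorem with the prescribed ordered tuples $s_1 \le s_2$ and $t_1 \le t_2$, which gives
\[
\det \begin{pmatrix} k(s_1,t_1) & k(s_1,t_2) \\ k(s_2,t_1) & k(s_2,t_2) \end{pmatrix} \ge 0,
\]
and then expand this $2 \times 2$ determinant to read off the claimed inequality \eqref{eq:totpos2}. This is essentially a one-line derivation once Theorem \ref{thm:totpos} is in hand, so there is no real obstacle.

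If one preferred a self-contained argument not relying on the general Karlin/Pinkus machinery, an elementary alternative would be a direct case analysis on the relative ordering of $\{s_1,s_2,t_1,t_2\}$, using the explicit Green-type factorization
\[
k(x,y) = \begin{cases} F(y)(1 - F(x)) & \text{if } x \ge y,\\ F(x)(1 - F(y)) & \text{if } x \le y, \end{cases}
\]
recorded in the proof of Theorem \ref{thm:totpos}. In the two ``separated'' cases ($s_2 \le t_1$ or $t_2 \le s_1$) all four values of $k$ factor with $F(s_i)$ and $1 - F(t_j)$ in a matching way and one obtains equality in \eqref{eq:totpos2}. In the two ``interlaced'' cases (for instance $s_1 \le t_1 \le s_2 \le t_2$), after factoring out a common non-negative quantity, the difference reduces to $F(s_2) - F(t_1) \ge 0$, using the monotonicity of $F$; the remaining interlaced case is symmetric.

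I expect the presentation to simply use the first route, since Theorem \ref{thm:totpos} has already been established.
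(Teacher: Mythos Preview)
Your proposal is correct and matches the paper's approach exactly: the corollary is stated immediately after Theorem~\ref{thm:totpos} as its $n=2$ case, and the inequality is just the expansion of the $2\times 2$ determinant. Your additional sketch of a direct case analysis via the Green-type factorization is a nice self-contained alternative, but the paper does not develop it.
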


The conclusion of Corollary \ref{cor:totpos2} is well known in the
literature under different names. Inequality \eqref{eq:totpos2},
here in the case of $\R^{2}$, is sometimes referred to as the \emph{Holley
condition} or the \emph{strong FKG condition}. The kernel $k$ is
also called \emph{log-supermodular} or \emph{multivariate totally
positive of order 2.}
We shall also the need the following extension:
\begin{coro}\label{cor:totpos}
Let $a$ and $b$ be two positive functions on $\R$ and define the kernel $k_{a,b}$ on $\R^2$ by
\[
k_{a,b}(x,y) = a(x) k(x,y) b(y).
\]
Then the kernel is totally positive and thus satisfies inequality \eqref{eq:totpos2}.
\end{coro}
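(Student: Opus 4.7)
The plan is to reduce the claim directly to Theorem~\ref{thm:totpos} through a trivial matrix factorization. For $n \geq 2$ and ordered points $s_1 \leq \cdots \leq s_n$, $t_1 \leq \cdots \leq t_n$, I observe from the definition $k_{a,b}(x,y) = a(x) k(x,y) b(y)$ that
\[
\bigl(k_{a,b}(s_i,t_j)\bigr)_{1 \le i,j \le n} \;=\; D_a \cdot \bigl(k(s_i,t_j)\bigr)_{1 \le i,j \le n} \cdot D_b,
\]
where $D_a$ and $D_b$ are the diagonal matrices with entries $a(s_1),\ldots,a(s_n)$ and $b(t_1),\ldots,b(t_n)$, respectively. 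Multiplicativity of the determinant then gives
\[
\det\bigl(k_{a,b}(s_i,t_j)\bigr)_{i,j} \;=\; \Bigl(\prod_{i=1}^n a(s_i) b(t_i)\Bigr)\, \det\bigl(k(s_i,t_j)\bigr)_{i,j}.
\]
Since $a,b>0$ on $\R$, the first factor is strictly positive, while the second factor is non-negative by Theorem~\ref{thm:totpos}. This proves total positivity of $k_{a,b}$, and the inequality \eqref{eq:totpos2} applied to $k_{a,b}$ is simply the specialization to $n=2$.

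I do not expect any genuine obstacle. The key structural feature is that each weight depends on only one of the two variables, so it can be absorbed into a left or right multiplication by a positive diagonal matrix; conjugation by such matrices multiplies every minor by a strictly positive constant and therefore transfers the sign structure of $k$ verbatim to $k_{a,b}$. The separate positivity of $a$ in the $x$-variable and of $b$ in the $y$-variable is exactly what makes this step legitimate and is precisely the content of the hypothesis.
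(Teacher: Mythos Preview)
Your proof is correct and is precisely the standard argument implicit in the paper, which states the corollary without proof as an immediate extension of Theorem~\ref{thm:totpos}. The diagonal factorization you use is exactly the intended justification.
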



We recall now the classical result, due to Fortuin, Kasteleyn  and Ginibre  \cite{FKG}, which asserts that the Holley condition 
implies the FKG inequality. We first state the definition of the FKG inequality in $\R^d$.

\begin{defi}\label{def:FKG} Let $d \geq 1$. A function $f:\R^{d}\to\R$
is said to be \emph{coordinate increasing} if it is non-decreasing
along each coordinate, that is if: 
\begin{equation}
\textrm{for all }x,y\in\R^{d},\textrm{ satisfying }x_{i}\leq y_{i},1\leq i\leq d \textrm{ one has }f(x)\leq f(y).\label{eq:croissante}
\end{equation}
A probability measure $\nu$ on $\R^{d}$ is said to satisfy the
\emph{FKG inequality} if for all functions $f$ and $g$ \emph{coordinate increasing}, one has: 
\begin{equation}
\Cov_{\nu}(f,g)\geq0.\label{e:def-FKG}
\end{equation}
\end{defi} 


\begin{theo}\label{thm:FKG} Let $\nu$ be a probability measure
$\R^{d}$ with density $k$ with respect to the Lebesgue measure.
Assume that for all $x,y\in\R^{d}$, 
\begin{equation}
k(x\wedge y)k(x\vee y)\geq k(x)k(y),\label{eq:k-FKG}
\end{equation}
where $x\wedge y=(\min(x_{1},y_{1}),\dots,\min(x_{d},y_{d}))$ and
$x\vee y=(\max(x_{1},y_{1}),\dots,\max(x_{d},y_{d}))$. Then $\nu$
satisfies the FKG inequality. \end{theo}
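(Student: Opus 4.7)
The plan is to prove Theorem~\ref{thm:FKG} by induction on the dimension $d$. The base case $d=1$ is precisely Chebyshev's inequality, already recorded as Proposition~\ref{prop:Chebyshev1d}, and requires nothing more than the two functions being non-decreasing (the density assumption~\eqref{eq:k-FKG} plays no role in one dimension, since every positive function on $\R$ is automatically log-supermodular).

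For the inductive step from $d-1$ to $d$, the strategy is to condition on the last coordinate. Write $x = (x', x_d) \in \R^{d-1} \times \R$, set $m(x_d) := \int_{\R^{d-1}} k(x', x_d)\, dx'$ for the marginal density of the last coordinate and $k(x' \mid x_d) := k(x', x_d)/m(x_d)$ for the conditional density, and define $F(x_d) := \int f(x', x_d)\, k(x' \mid x_d)\, dx'$ and $G(x_d)$ analogously. The argument then rests on two observations. \emph{Observation (A):} for each fixed $x_d$, the conditional density $k(\cdot \mid x_d)$ again satisfies \eqref{eq:k-FKG} on $\R^{d-1}$; this is immediate by applying~\eqref{eq:k-FKG} to the pair $(y', x_d),\, (z', x_d)$, whose join and meet share the last coordinate $x_d$, and dividing by $m(x_d)^2$. \emph{Observation (B):} both $F$ and $G$ are non-decreasing on $\R$. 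For $x_d \le y_d$, one splits
\[ F(y_d) - F(x_d) = \int [f(x', y_d) - f(x', x_d)]\, k(x' \mid y_d)\, dx' + \int f(x', x_d)\,[k(x' \mid y_d) - k(x' \mid x_d)]\, dx'. \]
The first integral is non-negative because $f$ is coordinate increasing in $x_d$, while the second is non-negative thanks to the \emph{monotone likelihood ratio property} that the map $x' \mapsto k(x', y_d)/k(x', x_d)$ is coordinate increasing on $\R^{d-1}$, which immediately implies stochastic dominance of $\nu(\cdot \mid y_d)$ over $\nu(\cdot \mid x_d)$ against coordinate increasing test functions. The monotone likelihood ratio itself is nothing but the two-point instance of~\eqref{eq:k-FKG} applied to $(w', x_d),\, (z', y_d)$ with $w' \ge z'$, whose join is $(w', y_d)$ and meet is $(z', x_d)$.

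Combining these observations completes the induction. By (A) and the inductive hypothesis applied to the coordinate increasing sections $x' \mapsto f(x', x_d)$ and $x' \mapsto g(x', x_d)$ under the conditional law,
\[ \int f(x', x_d)\, g(x', x_d)\, k(x' \mid x_d)\, dx' \ge F(x_d)\, G(x_d), \]
and integrating against $m(x_d)\, dx_d$ yields $\E_\nu[fg] \ge \int F G\, m\, dx_d$. By (B) and the one-dimensional base case (Chebyshev) applied to the non-decreasing functions $F, G$ under the marginal law $m(x_d)\, dx_d$, one has $\int F G\, m\, dx_d \ge \E_\nu[f]\, \E_\nu[g]$. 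Chaining gives $\Cov_\nu(f, g) \ge 0$.

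The main obstacle is recognizing Observation (B): extracting monotonicity of conditional expectations from a log-supermodularity assumption on the joint density. Once one sees that the requisite monotone likelihood ratio inequality is exactly a two-point form of~\eqref{eq:k-FKG} applied to suitably chosen pairs in $\R^d$, the rest is routine bookkeeping. A minor technical subtlety is the possible vanishing of $k$, where the likelihood ratio is ill-defined; this is handled either by restricting integration to the support of $k$, or by a standard regularisation replacing $k$ with $k + \eps$ and passing to the limit $\eps \to 0$, using dominated convergence and the fact that~\eqref{eq:k-FKG} is preserved (up to additive error) under such perturbations.
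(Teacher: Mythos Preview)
The paper does not give its own proof of Theorem~\ref{thm:FKG}; it is stated as a classical result of Fortuin, Kasteleyn and Ginibre and simply cited. So there is no in-paper argument to compare your proposal against. Your write-up follows the standard inductive route (essentially the Preston--Holley proof), and the overall architecture is correct. There is, however, one genuine gap.

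In Observation~(B) you assert that the coordinate-increasing likelihood ratio $x'\mapsto k(x',y_d)/k(x',x_d)$ ``immediately implies stochastic dominance of $\nu(\cdot\mid y_d)$ over $\nu(\cdot\mid x_d)$ against coordinate increasing test functions''. This implication is \emph{not} immediate when $d-1\ge 2$: passing from a monotone likelihood ratio to stochastic dominance in several variables is exactly Holley's theorem, which is equivalent in strength to what you are trying to prove. The clean fix is to invoke the inductive hypothesis a second time. Writing $r(x')=k(x',y_d)/k(x',x_d)$, the second integral in your split equals, up to a positive normalising constant, $\Cov_{\nu(\cdot\mid x_d)}\!\big(f(\cdot,x_d),\,r\big)$; both arguments are coordinate increasing, and by Observation~(A) the conditional density $k(\cdot\mid x_d)$ satisfies~\eqref{eq:k-FKG} on $\R^{d-1}$, so the inductive hypothesis yields non-negativity of this covariance. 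With that correction the induction closes.

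A smaller technical point: your regularisation $k\mapsto k+\eps$ does not in general preserve~\eqref{eq:k-FKG}, since the cross-term $\eps\,[k(x\wedge y)+k(x\vee y)-k(x)-k(y)]$ has no sign. The usual remedy is either to restrict all integrals to the support of $k$ (log-supermodularity forces the support to be a sublattice, so all the lattice operations stay inside it), or to approximate $k$ by strictly positive log-supermodular densities via convolution with a product mollifier rather than by adding a constant.
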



\begin{remark}\label{rmk:bakry-michel} Writing
$k=e^{H}$, the condition \eqref{eq:k-FKG} writes: 
\begin{equation}\label{eq:cond-Holley-H}
H(x\wedge y)+H(x\vee y)\geq H(x)+H(y).
\end{equation}
In the case where $k$ is smooth - more precisely when $H$ is of class
$\mathcal{C}^{2}$ here -, inequality \eqref{eq:cond-Holley-H} is equivalent to the following condition
on the second order cross-derivatives of $H$: 
\[
\frac{\partial^{2}}{\partial x_{i}\partial x_{j}}H(x)\geq0\textrm{ for }1\leq i\neq j\leq d.
\]
In this case, Bakry and Michel \cite{bakry-michel} proved the slightly
stronger result that the associated semi-group (see Section \ref{sec:comments})
preserves the class of coordinate increasing functions. Finally note that the kernel $k_{\mu}$ given in Theorem \ref{thm:covA} above, is not smooth on the diagonal of $\R^{2}$ and
that $\partial_{x,y}^{2}\ln k_\mu(x,y)=0$ for all $x\neq y\in\R^{2}$.
\end{remark}

\begin{remark}  Condition \eqref{eq:k-FKG} is not equivalent
to the FKG inequality. In the Gaussian setting, for a Gaussian vector
with non-singular matrix covariance $\Gamma$ on $\R^{d}$, the Holley
condition \eqref{eq:k-FKG} is equivalent to $(\Gamma^{-1})_{i,j}\leq0$
for $1\leq i\neq j\leq d$. But as proven by Pitt \cite{pitt:82}
in the Gaussian setting, the FKG inequality is equivalent to $\Gamma_{i,j}\geq0$
(see also Tong \cite{tong:book}). The condition on the coefficient
of $\Gamma^{-1}$ implies the one for $\Gamma$. But the converse
does not hold. The example 4.3.2 in Tong \cite{tong:book} provides
a covariance matrix for $d\geq3$ such that $\Gamma_{i,j}\geq0$ for
all $1\leq i,j\leq d$ but not $(\Gamma^{-1})_{i,j}\leq0$ for all
$1\leq i\neq j\leq d$.  
\end{remark}


\
\subsection{Hoeffding's formula as a relation between covariances}\label{subsec:nice-relation-dim1}
The main result of this section is Lemma \ref{thm:cov2} where  we express the  quantities appearing  in Theorem \ref{thm:main-1d} as a  covariance of the derivatives  of the functions with respect to a new probability measure on $\R^2$.
Let $\mu$ be a probability measure on $\R$ admitting a second moment.
We recall that $k$ is the non-negative kernel: 
\[
k(x,y)=F_{\mu}(x\wedge y)-F_{\mu}(x)F_{\mu}(y).
\]
and that from Theorem \ref{thm:covA}, it satisfies 
\begin{equation}
\iint k(x,y)dxdy=\Var_{\mu}(x)=\Var(\mu).\label{eq:varmu}
\end{equation}
By assumption, this last quantity is finite and we denote by $\mu^{(1)}$ the following probability measure on $\R^{2}$:

\[
d\mu^{(1)}(x,y)=\frac{k(x,y)}{\iint k(x',y')dx'dy'}dxdy
\]
In the case where $f$ and $g$ are some positive  functions, we also denote,
\[
d\mu_{f}^{(1)}(x,y)=\frac{f(x) k(x,y)}{\iint f(x') k(x',y')dx'dy'}dxdy
\]
and 
\[
d\mu_{f,g}^{(1)}(x,y)=\frac{f(x) k(x,y) g(y)}{\iint f(x') k(x',y')dx'dy'}dxdy
\]
The main result here  is the following relation between the  covariances of
$\mu$ and $\mu^{(1)}$. It consists essentially in a rewriting of Hoeffding's covariance identity \eqref{eq:cov-k} and to highlight the slight difference, we call it ``Hoeffding's covariance relation''.

 \begin{lem}[Hoeffding's covariance relation]\label{thm:cov2}
 Let $\mu$ be a probability measure on $\R$ admitting a second moment, with $\Var(\mu)>0$. Let $f,g:\R\to \R$ be some absolutely continuous functions  that belong to $L^2(\mu)$.
 
 \begin{enumerate} \item Then,

\begin{equation}
\frac{\Cov_{\mu}(f(x),g(x))}{\Var(\mu)}-\frac{\Cov_{\mu}(f(x),x)}{\Var(\mu)}\,\frac{\Cov_{\mu}(g(x),x)}{\Var(\mu)}=\Cov_{\mu^{(1)}}(f'(x),g'(y)).\label{eq:cov2}
\end{equation}
 \item If moreover, $f=e^{-\phi}$ is positive:
\begin{equation}
\frac{\Cov_{\mu}(f(x),g(x))}{Z_f}- \frac{1}{Z_f^2 }\Cov_{\mu}(f(x),x)\,\Cov_{\mu}(F(x),g(x))=\Cov_{\mu_f^{(1)}}(- \phi'(x),g'(y)).\label{eq:cov2b}
\end{equation}
 \item If moreover $g=e^{-\psi}$ is positive:
\begin{equation}
\frac{\Cov_{\mu}(f(x),g(x))}{Z_{f,g}}- \frac{1}{Z_{f,g}^2 }\Cov_{\mu}(f(x),G(x))\,\Cov_{\mu}(F(x),g(x))=\Cov_{\mu_{f,g}^{(1)}}( \phi'(x),\psi'(y)).\label{eq:cov2c}
\end{equation}
\end{enumerate} 
Here  $F$ and $G$ are primitives of $f$ and $g$ and 
\[
Z_f= \iint f(x)k(x,y) dxdy = \cov(F(x),x)>0,
\] 
and \[ Z_{f,g}= \iint f(x)k(x,y)g(y) dxdy = \cov(F(x),G(x))>0. 
\]
\end{lem}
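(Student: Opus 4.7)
The plan is to regard all three identities as direct consequences of Hoeffding's covariance identity \eqref{eq:cov-k}, applied to four different pairs of functions in each case, and then to recognize the resulting integrals as covariances under $\mu^{(1)}$, $\mu_f^{(1)}$, $\mu_{f,g}^{(1)}$. Nothing beyond integration by parts (already built into Hoeffding's formula) will be needed; the work is essentially bookkeeping.

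For Part (1), I would first note that $\iint k(x,y)\,dx\,dy = \Var(\mu)$ by \eqref{eq:varmu}, so that $\mu^{(1)}$ is indeed a probability measure. Applying Hoeffding's identity to the pair $(f,g)$ gives $\Cov_\mu(f,g) = \iint f'(x)k(x,y)g'(y)\,dx\,dy$, which upon dividing by $\Var(\mu)$ is $\EE_{\mu^{(1)}}[f'(x)g'(y)]$. Then I apply Hoeffding to $(f(x),x)$ and $(g(x),x)$ (derivative of the identity function is $1$) to obtain
\[
\Cov_\mu(f,x) = \iint f'(x)k(x,y)\,dx\,dy, \qquad \Cov_\mu(g,x) = \iint g'(y)k(x,y)\,dx\,dy,
\]
which identify respectively with $\Var(\mu)\,\EE_{\mu^{(1)}}[f'(x)]$ and $\Var(\mu)\,\EE_{\mu^{(1)}}[g'(y)]$. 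The identity \eqref{eq:cov2} is then the expansion $\Cov_{\mu^{(1)}}(f'(x),g'(y)) = \EE_{\mu^{(1)}}[f'(x)g'(y)] - \EE_{\mu^{(1)}}[f'(x)]\,\EE_{\mu^{(1)}}[g'(y)]$.

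For Part (2), the key algebraic observation is that $f=e^{-\phi}$ satisfies $f'(x) = -\phi'(x)f(x)$, so multiplying the integrand $f'(x)k(x,y)g'(y)$ by $f(x)/f(x)$ re-writes $\Cov_\mu(f,g)$ as $\iint (-\phi'(x)) f(x)k(x,y)g'(y)\,dx\,dy$, which is $Z_f\,\EE_{\mu_f^{(1)}}[-\phi'(x)g'(y)]$. Positivity and finiteness of $Z_f$ follow from Hoeffding applied to $(F,x)$: $Z_f = \iint f(x)k(x,y)\,dx\,dy = \Cov_\mu(F(x),x) > 0$ since $F$ is strictly increasing. In the same way, Hoeffding on $(f,x)$ gives $\Cov_\mu(f,x) = \iint f'(x)k(x,y)\,dx\,dy = Z_f\,\EE_{\mu_f^{(1)}}[-\phi'(x)]$, and Hoeffding on $(F,g)$ gives $\Cov_\mu(F,g) = \iint f(x)k(x,y)g'(y)\,dx\,dy = Z_f\,\EE_{\mu_f^{(1)}}[g'(y)]$. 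Expanding the covariance under $\mu_f^{(1)}$ then yields \eqref{eq:cov2b}.

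For Part (3), I apply the same trick twice: $f' = -\phi' f$ and $g' = -\psi' g$, so that the integrand $f'(x)k(x,y)g'(y)$ becomes $\phi'(x)\psi'(y)f(x)k(x,y)g(y)$ (the two minus signs cancel). Hence $\Cov_\mu(f,g) = Z_{f,g}\,\EE_{\mu_{f,g}^{(1)}}[\phi'(x)\psi'(y)]$, provided we interpret the denominator in the definition of $\mu_{f,g}^{(1)}$ as $Z_{f,g}=\iint f(x)k(x,y)g(y)\,dx\,dy$ (a minor typo in the statement); this equals $\Cov_\mu(F,G)>0$ by Hoeffding applied to $(F,G)$, ensuring $\mu_{f,g}^{(1)}$ is a probability measure. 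Hoeffding applied to $(f,G)$ and to $(F,g)$ respectively give $\Cov_\mu(f,G) = -Z_{f,g}\,\EE_{\mu_{f,g}^{(1)}}[\phi'(x)]$ and $\Cov_\mu(F,g) = -Z_{f,g}\,\EE_{\mu_{f,g}^{(1)}}[\psi'(y)]$; the product of the two minus signs again cancels when we expand $\Cov_{\mu_{f,g}^{(1)}}(\phi'(x),\psi'(y))$, yielding \eqref{eq:cov2c}. There is no real obstacle; the only subtlety is tracking signs carefully in Parts (2) and (3) and recognizing that the four primitive functions $f,g,F,G$ must each be paired with either the identity or the other primitive in Hoeffding's formula to produce the four integrals that appear.
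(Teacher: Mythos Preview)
Your proof is correct and follows essentially the same route as the paper: apply Hoeffding's identity \eqref{eq:cov-k} to the relevant pairs of functions, recognize each integral as an expectation under $\mu^{(1)}$, $\mu_f^{(1)}$ or $\mu_{f,g}^{(1)}$, and then expand the covariance as a difference of expectations. You are also right to flag the typo in the normalizing constant of $\mu_{f,g}^{(1)}$.
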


Note that this approach is also linked to determinants in the sense that the left hand sides of the  equalities \eqref{eq:cov2}, \eqref{eq:cov2b} \eqref{eq:cov2c} can be written as determinants. For example, formula \eqref{eq:cov2} can be written as
\begin{equation}\label{formula-det}
\Var(\mu)^{2}\Cov_{\mu^{(1)}}(f'(x),g'(y))=\det\begin{pmatrix}\Var(\mu) & \cov_{\mu}(x,f(x))\\
\cov_{\mu}(x,g(x)) & \cov_{\mu}(f,g)
\end{pmatrix}.
\end{equation}

\
 \begin{proof} Using several times the covariance representation
of Theorem \ref{thm:covA}, one has: 
\begin{align*}
 & \frac{\Cov_{\mu}(f(x),g(x))}{\Var(\mu)}
=  \iint f'(x)\frac{k(x,y)}{\iint k(x',y')dx'dy'}g'(y)dxdy\\
= & \Cov_{\mu^{(1)}}(f'(x),g'(y))\\ 
 & +\left(\iint f'(x)\frac{k(x,y)}{\iint k(x',y')dx'dy'}dxdy\right)\left(\iint g'(y)\frac{k(x,y)}{\iint k(x',y')dx'dy'}dxdy\right)\\
= & \Cov_{\mu^{(1)}}(f'(x),g'(y))+\frac{\Cov_{\mu}(f(x),x)}{\Var(\mu)}\,\frac{\Cov_{\mu}(g(x),x)}{\Var(\mu)}.
\end{align*}
Similarly, if $f=e^{-\phi}$,
\begin{align*}
 & \frac{\Cov_{\mu}(f(x),g(x))}{Z_f}
= \iint( - \phi'(x) ) \frac{ f(x) k(x,y)}{Z_f}g'(y)dxdy\\
= & \Cov_{\mu_f^{(1)}}(- \phi'(x),g'(y))+ \frac{1}{Z_f^2 }\left(\iint f'(x)k(x,y)dxdy\right)\left(\iint f(x)k(x,y) g'(y)dxdy\right)\\
= & \Cov_{\mu_f^{(1)}}(- \phi'(x),g'(y))+ \frac{1}{Z_f^2 }\Cov_{\mu}(f(x),x)\,\Cov_{\mu}(F(x),g(x))
\end{align*}
and if moreover $g=e^{-\psi}$,
\begin{align*}
 & \frac{\Cov_{\mu}(f(x),g(x))}{Z_{f,g}}
= \iint( - \phi'(x) ) \frac{ f(x) k(x,y)g(y) }{Z_{f,g}} ( -\psi'(y))dxdy\\
= & \Cov_{\mu_{f,g}^{(1)}}( \phi'(x), \psi'(y))+ \frac{1}{Z_{f,g}^2 }\left(\iint f'(x)k(x,y) g(y) dxdy\right)\left(\iint f(x)k(x,y) g'(y)dxdy\right)\\
= & \Cov_{\mu_{f,g}^{(1)}}(\phi'(x),\psi'(y))+ \frac{1}{Z_{f,g}^2 }\Cov_{\mu}(f(x),G(x))\,\Cov_{\mu}(F(x),g(x)).
\end{align*}
\end{proof}

\subsection{A second proof of Theorem \ref{thm:main-1d}}

\label{sec:proof1d} We are now ready to turn to the second proof of Theorem \ref{thm:main-1d}(1) and (2)
pertaining to dimension one. The last ingredient will be the use of the FKG inequality.

\begin{proof}[Proofs of Theorem \ref{thm:main-1d}(1) and (2)] Let $\mu$
be any probability measure $\R$ admitting a second moment and let
$f$ and $g$ be two convex functions on $\R$. Using the  first covariance
relation of Lemma \ref{thm:cov2}, it is equivalent to prove that:
\[
\Cov_{\mu^{(1)}}(f'(x),g'(y))\geq0.
\]
By Corollary \ref{cor:totpos2} and Theorem \ref{thm:FKG}, the
probability measure $\mu^{(1)}$ on $\R^{2}$ satisfies the FKG inequality.
Since $f$ and $g$ are convex, the functions $(x,y) \to f'(x)$ and $(x,y)\to g'(y)$ on $\R^2$ are in particular increasing
along coordinates in $\R^{2}$, which implies the desired inequality. 

As for the second item, let $f=e^{-\phi}$ be a log-concave function and $g$ be a convex function on $\R$ and such  that $f$ is orthogonal to the linear function $x$.
By the second covariance formula  of  Lemma \ref{thm:cov2}, and since  $\cov_\mu(f(x),x)=0$, 
\[
\cov_\mu(f,g)= - \cov_{\mu^{(1)}_f} (\phi'(x), g'(y)).
\]
The second point follows similarly as above, since by Corollary \ref{cor:totpos} the kernel of the probability measure $\mu^{(1)}_f$ is also totally positive.
\end{proof}

\section{More covariance inequalities in dimension one for Chebyshev systems}
\label{sec:cheb} 

In this section, we consider some generalizations
in dimension one of Theorem \ref{thm:main-1d}, involving
formulations through determinants for Chebyshev systems. As for Theorem \ref{thm:main-1d}, we provide two proofs, one based only through determinantal identities and the other taking advantage of the strong fact that the
kernel $k_\mu$ is totally positive.

\subsection{Covariance inequalities for Chebyshev systems}
Let us first define Chebyshev systems.
\begin{defi} \label{def:Cheby} A $r$-uple of functions $(f_{1},\dots,f_{r})$ with $f_i : \R \to \R$ is said to
form a \emph{Chebyshev system} (of order r) if for all $t_{1}\leq\dots\leq t_{r}\in\R$,
\begin{equation}
\det(f_{i}(t_{j}))_{1\leq i,j\leq r}\geq0.\label{eq:cheb-def}
\end{equation}
\end{defi}


The main result of this section is the following:

\begin{theo}\label{thm:cheb} Let $n\geq1$ and $f_{1},\dots,f_{n}:\R\to\R$
and $g_{1},\dots,g_{n}:\R\to\R$ be some functions such that 
      both the $(n+1)$-uples $(1,f_1,\ldots,f_n)$ and $(1,g_1,\ldots,g_n)$ form two Chebyshev systems.
Denote $F(x)=\begin{pmatrix}f_{1}(x)\\
\dots\\
f_{n}(x)
\end{pmatrix}$ and $G(x)=\begin{pmatrix}g_{1}(x)\\
\dots\\
g_{n}(x)
\end{pmatrix}$, then: 
\begin{equation}
\det(\Cov(F,G))\geq0.\label{eq:cheb-concl}
\end{equation}
\end{theo}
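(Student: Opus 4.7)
The plan is to adapt the determinantal proof of Theorem~\ref{th:CovaC} from the case $n=2$ to the general case, by combining Andreev's formula (Proposition~\ref{prop:Andreev}) with the Chebyshev assumption applied to the augmented systems $(1,f_1,\ldots,f_n)$ and $(1,g_1,\ldots,g_n)$. First, I would rewrite $\det(\Cov(F,G))$, which is an $n\times n$ determinant, as an $(n+1)\times (n+1)$ moment determinant. Subtracting $\E_\mu[f_i]$ times the first row from row $i+1$ and expanding along the first column shows
$$\det(\Cov(F,G)) = \left|\begin{array}{cccc} 1 & \E_\mu[g_1] & \cdots & \E_\mu[g_n] \\ \E_\mu[f_1] & \E_\mu[f_1 g_1] & \cdots & \E_\mu[f_1 g_n] \\ \vdots & \vdots & \ddots & \vdots \\ \E_\mu[f_n] & \E_\mu[f_n g_1] & \cdots & \E_\mu[f_n g_n] \end{array}\right|.$$

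Next I would introduce the augmented systems $\tilde f_1 = \tilde g_1 \equiv 1$, $\tilde f_{i+1} = f_i$, $\tilde g_{i+1} = g_i$ for $1 \le i \le n$, so that the right-hand side above is precisely $\det\bigl(\E_\mu[\tilde f_i(X)\,\tilde g_j(X)]\bigr)$. Applying Proposition~\ref{prop:Andreev} to these $(n+1)$-uples yields
$$\det(\Cov(F,G)) = \frac{1}{(n+1)!}\, \E_{\mu^{\otimes (n+1)}}\!\left[\det\bigl(\tilde f_i(X_j)\bigr)\,\det\bigl(\tilde g_i(X_j)\bigr)\right].$$
To conclude, I would show pointwise non-negativity of the integrand. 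Fix $(x_1,\ldots,x_{n+1}) \in \R^{n+1}$ and let $\sigma \in S_{n+1}$ be a permutation such that $x_{\sigma(1)} \le \cdots \le x_{\sigma(n+1)}$. Permuting the columns of each $(n+1)\times(n+1)$ matrix by $\sigma$ multiplies each determinant by $\ve(\sigma)$, and since $\ve(\sigma)^2 = 1$,
$$\det\bigl(\tilde f_i(x_j)\bigr)\,\det\bigl(\tilde g_i(x_j)\bigr) = \det\bigl(\tilde f_i(x_{\sigma(j)})\bigr)\,\det\bigl(\tilde g_i(x_{\sigma(j)})\bigr).$$
The two factors on the right are both non-negative by the Chebyshev hypothesis on $(1,f_1,\ldots,f_n)$ and $(1,g_1,\ldots,g_n)$ evaluated at the sorted arguments. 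Integrating over $\R^{n+1}$ then gives $\det(\Cov(F,G)) \ge 0$.

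The main obstacle is purely conceptual rather than computational: one must recognize that the natural object to which Andreev's formula should be applied is not the $n\times n$ covariance matrix itself but the $(n+1)\times (n+1)$ moment matrix of the augmented system, so that the constant function $1$ absorbs the centering. Once this is in place, the $\ve(\sigma)^2 = 1$ trick used in the $n=2$ case of Theorem~\ref{th:CovaC} generalises directly and reduces the problem to evaluating the Chebyshev determinants on sorted tuples, where positivity is part of the assumption.
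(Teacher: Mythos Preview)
Your proof is correct and is essentially the paper's first proof of Theorem~\ref{thm:cheb}: the paper establishes the same identity \eqref{eq:egalite-det} between the $n\times n$ covariance determinant and the $(n+1)\times(n+1)$ moment determinant (via column rather than row operations), then applies Proposition~\ref{prop:Andreev} and the same $\ve(\sigma)^2=1$ argument on sorted tuples.
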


\subsection{A first proof using the determinantal approach.}
We produce here a proof of Theorem \ref{thm:cheb} which is based on the methods introduced in Section~\ref{sec:Andreev-dim1}.

We first claim that
\begin{equation}\label{eq:egalite-det}
   \det\left(\cov_\mu(F ,G)\right)_{1 \le i,j \le n}= \det\left( \EE_\mu\left[f_i g_j  \right]_{0 \le i,j \le n}\right),
\end{equation}
where we set $f_0(x) = 1$ and $g_0(x)=1$. Indeed, let $A$ be the matrix $\left(\EE_\mu\left[f_i g_j \right]\right)_{0 \le i,j \le n}$. Let $C_0,\ldots,C_n$ denote the columns of the matrix $A$. Let us consider the matrix $B$ obtained by replacing, for every $1 \le j \le n$, the column $C_j$ by $C_j-\EE[g_j] C_0$. As $B$ is obtained from $A$ only by elementary operations, they have the same determinant. Moreover, $B$ can be written in block form:
\begin{equation*}
    B=\begin{pmatrix} 1 & 0 \\ E_\mu(F) & \Cov(F,G) \end{pmatrix}.
\end{equation*}
This proves that $\det(A) = \det\left(\Cov(F,G)\right)$ and \eqref{eq:egalite-det} follows. 

\begin{proof}[Proof of Theorem \ref{thm:cheb}]
By the equality \eqref{eq:egalite-det} and    the Andreev-type formula of Proposition~\ref{prop:Andreev}, one has 
$$
\det\left(\Cov(F,G)\right) = \frac{1}{n!} \int_{\R^{n+1}} \det\left(f_i(x_j)\right)_{0 \le i,j \le n} \det\left(g_i(x_j)\right)_{0 \le i,j \le n} d\mu(x_0)\cdots d\mu(x_n).
$$
Let us fix $(x_0,\ldots,x_n)\in \R^{n+1}$. There exists a permutation $\sigma \in S_n$ such that $x_{\sigma(0)} \le \cdots \le x_{\sigma(n)}$. As $(f_0,\ldots,f_n)$ and $(g_0,\ldots,g_n)$ are Chebyshev systems, one has
$$
\varepsilon(\sigma) \det\left(f_i(x_j)\right)_{0 \le i,j \le n} \ge 0 \ , \ \varepsilon(\sigma) \det\left(g_i(x_j)\right)_{0 \le i,j \le n} \ge 0.
$$
One then has $\det\left(f_i(x_j)\right)_{0 \le i,j \le n}\det\left(g_i(x_j)\right)_{0 \le i,j \le n} \ge 0$, so $$\det\left(\Cov(F,G)\right) \ge 0$$
and the result follows.
\end{proof}

\subsection{A second proof with the Hoeffding covariance identity.}
We turn now to the proof of the covariance inequality of Theorem \ref{thm:cheb} using Hoeffding's covariance identity \eqref{eq:cov-k}. The main point will be the use of a  bivariate Andreev-type formula for bilinear integral operators.
Another key point is to transfer the assumption on the functions to an assumption on their derivatives. 
\begin{prop} \label{prop:equiv-cheb}
 Let $n\geq1$ and $f_{1},\dots,f_{n}:\R\to\R$
 be some $\mathcal C^1$ functions.  The following assertions are equivalent:
\begin{enumerate}
    \item The $(n+1)$-uple $(1,f_1,\ldots,f_n)$ forms a  Chebyshev system.
    \item The  $n$-uple $(f_{1}',\dots,f_{n}')$
forms a  Chebyshev system.
\end{enumerate} 
\end{prop}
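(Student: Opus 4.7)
The plan is to establish and use the determinantal ``fundamental theorem of calculus'' identity
\[
\det\!\begin{pmatrix}1 & 1 & \cdots & 1\\ f_1(t_0) & f_1(t_1) & \cdots & f_1(t_n)\\ \vdots & & & \vdots\\ f_n(t_0) & \cdots & & f_n(t_n)\end{pmatrix}
= \int_{t_0}^{t_1}\!\!\cdots\!\!\int_{t_{n-1}}^{t_n}\det\!\bigl(f_i'(u_j)\bigr)_{i,j=1}^n\,du_1\cdots du_n,
\]
valid for every $t_0<t_1<\cdots<t_n$ and $\mathcal{C}^1$ functions $f_1,\ldots,f_n$. To prove it, I would subtract column $j-1$ from column $j$ for $j=n,n-1,\ldots,1$: the first row becomes $(1,0,\ldots,0)$ and each remaining entry becomes $f_i(t_j)-f_i(t_{j-1}) = \int_{t_{j-1}}^{t_j}f_i'(u)\,du$. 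Expanding along the first row and using column-multilinearity of the determinant to extract the integrals yields the iterated-integral form on the right.

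The direction $(2)\Rightarrow(1)$ is then immediate: the domain of integration enforces $u_1\le u_2\le\cdots\le u_n$, so the hypothesis that $(f_1',\ldots,f_n')$ is Chebyshev makes the integrand pointwise non-negative and hence the left-hand determinant non-negative for every ordered $(n+1)$-tuple of $t$'s.

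For the converse $(1)\Rightarrow(2)$, fix $s_1<\cdots<s_n$; the target is $\det(f_i'(s_j))\ge 0$. The plan is a ``pinching'' argument applied to the Chebyshev inequality: for each $j$, apply the assumption with $t_{j-1}=s_j-\varepsilon$, $t_j=s_j+\varepsilon$ and the remaining $t_k$'s set equal, in increasing order, to the other $s$-values; subtracting column $j-1$ from column $j$, dividing by $2\varepsilon$ and letting $\varepsilon\to 0$ replaces column $j$ by $(0,f_i'(s_j))^T$, producing an inequality $\mathcal{I}_j\ge 0$ whose matrix carries $f_i'(s_j)$ in column $j$ and values $f_i(s_k)$ elsewhere. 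The family $\{\mathcal{I}_j\}_j$ must then be combined algebraically to isolate $\det(f_i'(s_j))_{i,j=1}^n$. As a model, in the case $n=2$ the two inequalities read
\[
f_1'(s_1)[f_2(s_2)-f_2(s_1)] \ge f_2'(s_1)[f_1(s_2)-f_1(s_1)], \qquad f_1'(s_2)[f_2(s_2)-f_2(s_1)] \le f_2'(s_2)[f_1(s_2)-f_1(s_1)],
\]
and dividing by $f_2'(s_j)[f_2(s_2)-f_2(s_1)]$ (positive after a small $f_2\mapsto f_2+\delta\,x$ perturbation, if needed, to remove degeneracies) yields the monotonicity $f_1'(s_1)/f_2'(s_1)\ge f_1'(s_2)/f_2'(s_2)$, which is exactly $\det(f_i'(s_j))_{i,j=1}^2\ge 0$.

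The hard part is promoting this algebraic combination to general $n$: the pinched inequalities still mix derivative columns with function-value columns, and one needs an elimination scheme that discards the function-value columns without introducing extra sign hypotheses. I would proceed by induction on $n$, reducing the Chebyshev statement for $(f_1',\ldots,f_n')$ to the one for $(f_1',\ldots,f_{n-1}')$ by normalizing against one of the $f_i'$'s (after a perturbation to guarantee positivity on the range of interest) and using the $n=1$ base case, which is simply the classical equivalence ``$f_1$ non-decreasing $\Leftrightarrow f_1'\ge 0$''; continuity of the $f_i'$ lets one remove the auxiliary positivity hypotheses by approximation in the end.
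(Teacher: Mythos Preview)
Your integral identity and the direction $(2)\Rightarrow(1)$ are correct and coincide with the paper's argument: subtract consecutive columns, expand, pull the integrals out by multilinearity, and use that the domain of integration enforces $u_1\le\cdots\le u_n$.

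For $(1)\Rightarrow(2)$ you take a much more laborious route than the paper, and you do not close it. Your plan is to pinch a \emph{single} pair $(t_{j-1},t_j)$ around $s_j$, producing for each $j$ a mixed inequality $\mathcal I_j$ with one derivative column and $n-1$ function-value columns, and then to recombine the family $\{\mathcal I_j\}$ algebraically. You carry this out only for $n=2$, where it already requires an ad hoc positivity perturbation; for general $n$ you offer an inductive sketch (``normalize against one of the $f_i'$'s after a perturbation'') that is not worked out and whose elimination of the function-value columns is precisely the hard part. This is a genuine gap.

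The paper avoids the recombination problem entirely. It performs the \emph{same} column operations you already did---subtracting each column from the next---but then simply factors $\prod_{j=1}^n(x_j-x_{j-1})$ out of the resulting $n\times n$ minor, obtaining
\[
0\le \det\Bigl(\tfrac{f_i(x_j)-f_i(x_{j-1})}{x_j-x_{j-1}}\Bigr)_{1\le i,j\le n}
\qquad\text{for all }x_0<\cdots<x_n,
\]
which is exactly your integral identity divided by the box volume. From there the paper passes to the limit ``$x_j\to x_{j-1}$ successively'' to reach $\det(f_i'(x_{j-1}))\ge 0$. In other words, you were one step away: instead of pinching a single column and fighting to eliminate the remaining function values, pinch \emph{all} columns at once via the divided-difference matrix. (A word of caution: the limiting step deserves care, since the intervals $[x_{j-1},x_j]$ share endpoints and cannot all shrink independently around prescribed distinct $s_j$'s; one has to organise the passage to the limit---or argue by contradiction using continuity of $\det(f_i'(\cdot))$---more carefully than the one-line statement suggests. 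But that is a refinement of the paper's direct idea, not a replacement for it.)
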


\begin{proof}
Assume (1), let $x_0 <x_1 <\cdots <x_n$  and set $f_0 =1$. Then, replacing the  $i$-th column $C_i$ by $C_i- C_{i-1}$ for $1\leq i \leq n$,  yields 
\begin{align*}
   0\leq  \det(f_i (x_j) )_{0\leq i,j\leq n} &= \det(f_i (x_{j}) -f_i(x_{j-1})  )_{1\leq i,j\leq {n}}\\
    &=  \prod_{j=1}^n (x_{j}-x_{j-1}) \det \left( \frac{f_i (x_{j}) -f_i(x_{j-1})} {x_{j}-x_{j-1}} \right)_{1\leq i,j\leq {n}}.
\end{align*}
Letting successively $x_{j}$ tend to $x_{j-1}$ for $j=1,...,n$ gives 
\[
\det(f_i' (x_{j-1}))_{1\leq i,j\leq {n}} \geq 0
\]
and (2) follows.

Now assume (2), for $x_0<x_1 <\cdots <x_n$. Since one has $f_0=1$, replacing the  $i$-th column $C_i$ by $C_i- C_{i-1}$ for $1\leq i \leq n$, one gets
\begin{align*}
\det(f_i (x_j) )_{0\leq i,j\leq n} &=  \det(f_i (x_j) -f_i(x_{j-1})  )_{1\leq i,j\leq n}\\
&=   \det\left( \int_{x_{j-1}}^{x_j}  f_i' (u_j) d u_j  \right)_{1\leq i,j\leq n}\\
&=  \int_{x_{n-1 }}^{x_{n}}  \dots  \int_{x_{1}}^{x_2}  \int_{x_{0}}^{x_1}  \det(f_i' (u_j))_{1\leq i,j\leq {n}} \;   du_1 du_2 \dots du_n 
\end{align*}
where the last line follows from an Andreev-type formula. Since $u_1 \leq u_2 \leq  \cdots \leq u_n$, we have  
$\det(f_i' (u_j)) \geq 0$ by (2), and (1) follows.
\end{proof}

\

The second key point is the following bivariate
Andreev-type  formula for bilinear kernel integral operators on $\R^{2n}$. It is  stated here without a proof.

\begin{prop}\label{prop:bivar-andreev}
With the same notation as in Theorem \ref{thm:cheb}, 
\begin{align*}
\det(\Cov(F,G)) & =\det\left(\iint_{x,y\in\R}f'_{i}(x)k(x,y)g'_{j}(y)d\mu(x)d\mu(y)\right)_{1\leq i,j\leq n}\\
 & =\iint_{\mathcal{D}}\det\left(f_{i}'(x_{j})\right)\det\left(k(x_{i},y_{j})\right)\det\left(g_{i}'(y_{j})\right)
 dx_1 \dots dx_d  dy_1 \dots dy_d
\end{align*}
where $\mathcal{D}$ is the
domain of $\RR^{2n}$ defined by 
\[
\mathcal{D}=\left\{ (x_{1},\ldots,x_{n},y_{1},\ldots,y_{n})\in\RR^{2n}\ :\ x_{1}<\cdots<x_{n}\ ,\ y_{1}<\cdots<y_{n}\right\} .
\]
\end{prop}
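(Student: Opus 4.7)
The plan is to establish the two equalities in turn; neither actually requires the Chebyshev hypothesis, which only enters later (via Proposition~\ref{prop:equiv-cheb} and Theorem~\ref{thm:totpos}) to turn the identity into the nonnegativity statement of Theorem~\ref{thm:cheb}. For the first equality, I would apply Hoeffding's covariance identity from Theorem~\ref{thm:covA} entry-by-entry: for every $1\le i,j\le n$,
\[
\Cov_{\mu}(f_{i},g_{j}) = \iint_{\R^{2}} f_{i}'(x)\,k(x,y)\,g_{j}'(y)\,dx\,dy,
\]
and substituting these expressions into the matrix before taking the determinant gives the first equality. The notation $d\mu(x)d\mu(y)$ in the statement should be read as Lebesgue $dx\,dy$, consistent with Theorem~\ref{thm:covA} and with the second line of the display.

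For the second equality, I would set $A_{ij} = \iint f_{i}'(x) k(x,y) g_{j}'(y)\,dx\,dy$, expand $\det(A_{ij})$ as a signed sum over $\sigma\in S_{n}$, and use Fubini to collect everything under one $2n$-fold integral:
\[
\det(A_{ij}) = \sum_{\sigma\in S_{n}} \varepsilon(\sigma)\int_{\R^{2n}} \prod_{i=1}^{n} f_{i}'(x_{i})\, k(x_{i},y_{i})\, g_{\sigma(i)}'(y_{i})\,dx_{1}\cdots dx_{n}\,dy_{1}\cdots dy_{n}.
\]
The inner signed sum $\sum_{\sigma}\varepsilon(\sigma)\prod_{i}g_{\sigma(i)}'(y_{i})$ equals $\det(g_{i}'(y_{j}))$ after a relabeling and transposition, so this collapses to the single integral of $\det(g_{i}'(y_{j}))\,\prod_{i} f_{i}'(x_{i}) k(x_{i},y_{i})$ over $\R^{2n}$.

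The key step, in the spirit of Proposition~\ref{prop:Andreev}, is then a double symmetrization. Permuting the labels $y_{1},\ldots,y_{n}$ by $\pi\in S_{n}$ multiplies $\det(g_{i}'(y_{j}))$ by $\varepsilon(\pi)$ (column permutation) while replacing $\prod_{i} k(x_{i},y_{i})$ by $\prod_{i} k(x_{i},y_{\pi(i)})$; averaging over $\pi$ therefore rewrites the product $\prod_{i} k(x_{i},y_{i})$ as $\frac{1}{n!}\sum_{\pi}\varepsilon(\pi)\prod_{i} k(x_{i},y_{\pi(i)}) = \frac{1}{n!}\det(k(x_{i},y_{j}))$. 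An analogous symmetrization over the $x_{i}$'s introduces the factor $\det(f_{i}'(x_{j}))$, and one ends up with
\[
\det(\Cov(F,G)) = \frac{1}{(n!)^{2}} \int_{\R^{2n}} \det(f_{i}'(x_{j}))\,\det(k(x_{i},y_{j}))\,\det(g_{i}'(y_{j}))\,dx_{1}\cdots dx_{n}\,dy_{1}\cdots dy_{n}.
\]

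To finish, I would observe that the integrand is invariant under $S_{n}\times S_{n}$ acting independently on the $x$- and $y$-labels: a permutation $\sigma$ of the $x_{i}$'s introduces a factor $\varepsilon(\sigma)$ from the column permutation of $\det(f_{i}'(x_{j}))$ and the same factor from the row permutation of $\det(k(x_{i},y_{j}))$, whose product is $+1$, and likewise for the $y_{j}$'s. Restriction to the fundamental domain $\mathcal{D}=\{x_{1}<\cdots<x_{n},\,y_{1}<\cdots<y_{n}\}$ then contributes an overall factor $(n!)^{2}$ that exactly cancels the prefactor, yielding the stated formula. The main obstacle is simply bookkeeping --- keeping the permutation signs aligned across the three determinants in the symmetrization --- together with a standard Fubini justification, which is covered by the integrability hypotheses stated in the preamble of the paper.
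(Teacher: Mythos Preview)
Your argument is correct. The paper actually states this proposition without proof, so there is nothing to compare against directly; your route --- entrywise Hoeffding identity for the first equality, then determinant expansion, Fubini, two successive symmetrizations to produce $\det(f_i'(x_j))$ and $\det(k(x_i,y_j))$, and finally the $S_n\times S_n$-invariance to pass from $\RR^{2n}$ to $\mathcal{D}$ --- is exactly the standard Andreev-type computation one would expect, and your sign bookkeeping is clean.
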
 
A second  proof of Theorem \ref{thm:cheb} is then immediate. 
\begin{proof}[Second proof of Theorem \ref{thm:cheb}] By hypothesis and since the kernel $k$ is totally positive,
the three determinants in the integral on the second line in the equality of  Proposition \ref{prop:bivar-andreev}  are non-negative and the result follows.
\end{proof}

\subsection{Some applications.}
We shall use Theorem \ref{thm:cheb} under the following particular
form.
\begin{coro}\label{cor:cheb} Let $n\geq1$ and $\phi_1,\dots,\phi_{n}:\R\to\R$
and $f,g:\R\to\R$ be some functions and denote $F(x)=\begin{pmatrix}\phi_{1}(x)\\
\dots\\
\phi_{n}(x)\\
f(x)
\end{pmatrix}$ and $G(x)=\begin{pmatrix}\phi_{1}(x)\\
\dots\\
\phi_{n}(x)\\
g(x)
\end{pmatrix}$. Assume that $(1,\phi_{1},\dots,\phi_{n},f)$ and $(1,\phi_{1},\dots,\phi_{n},g)$
form two Chebyshev systems, then: 
\begin{equation}
\det(\Cov(F,G))\geq0.\label{eq:cheb-concl2}
\end{equation}
\end{coro}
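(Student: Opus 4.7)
The plan is to observe that Corollary~\ref{cor:cheb} is an immediate specialization of Theorem~\ref{thm:cheb}, obtained by forcing the first $n$ components of the two vector-valued functions to coincide. Concretely, I would apply Theorem~\ref{thm:cheb} with $n$ replaced by $n+1$ and with the choice
\[
f_i = g_i = \phi_i \quad (1 \le i \le n), \qquad f_{n+1} = f, \qquad g_{n+1} = g.
\]
With these choices, the vectors $F$ and $G$ produced by Theorem~\ref{thm:cheb} are exactly the vectors $F$ and $G$ defined in the statement of the corollary.

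Next I would verify that the hypotheses match: Theorem~\ref{thm:cheb} requires $(1,f_1,\dots,f_{n+1})$ and $(1,g_1,\dots,g_{n+1})$ to form Chebyshev systems, which in this setup read $(1,\phi_1,\dots,\phi_n,f)$ and $(1,\phi_1,\dots,\phi_n,g)$, i.e.\ precisely the assumption of the corollary. The conclusion $\det(\Cov(F,G)) \ge 0$ is then the conclusion of Theorem~\ref{thm:cheb} transported verbatim.

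There is essentially no obstacle beyond matching notation; the corollary is stated separately only because of its convenient form for applications (where one thinks of $\phi_1,\dots,\phi_n$ as a fixed ``background'' Chebyshev system to which two comparable functions $f$ and $g$ are appended). Accordingly, the proof in the paper should consist of a single sentence invoking Theorem~\ref{thm:cheb} with the above substitution.
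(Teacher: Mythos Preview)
Your proposal is correct and matches the paper's approach exactly: the paper simply presents Corollary~\ref{cor:cheb} as a ``particular form'' of Theorem~\ref{thm:cheb} without further proof, and your explicit identification $f_i=g_i=\phi_i$ for $1\le i\le n$, $f_{n+1}=f$, $g_{n+1}=g$ is precisely the intended specialization.
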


It is well known that, if $f$ is smooth and if  we choose more precisely $\phi_{1}(x)=x,\dots,\phi_{n}(x)=x^{n}$,
the condition that $(1,\phi_{1},\dots,\phi_{n},f)$ forms a Chebyshev
system  is 
equivalent to $f^{(n+1)}(x) \geq 0,\textrm{ for all }x\in\R$.
This is a well known generalization of Proposition \ref{prop:ConvC}, see \cite[Chapter 6 Example 4]{karlin:book}.

In the case where we consider $n=1$, we recover Theorem \ref{thm:main-1d}(1).
Indeed, in dimension one,  by Proposition \ref{prop:ConvC}, the convexity assumptions on  $f$ and $g$  are  equivalent to  the fact that $(1,x,f)$ and $(1,x,g)$  both form a Chebyshev system.



\

We now describe the result for $n=2$ and $\phi_{1}(x)=x,\phi_{2}(x)=x^{2}$.


\begin{coro}\label{cor:n=00003D3} Let $\mu$ be probability measure
on $\R$ admitting a fourth moment. Assume that $f^{(3)}(x)\geq0$
and $g^{(3)}(x)\geq0$ for all $x\in\R$, then 
\begin{align*}
\cov_{\mu}(f,g)\left(\Var_{\mu}(x^{2})\Var_{\mu}(x)-\cov_{\mu}(x,x^{2})^{2}\right)\\
\geq\begin{pmatrix}\cov_{\mu}(x,f) & \cov_{\mu}(x^{2},f)\end{pmatrix}\begin{pmatrix}\Var_{\mu}(x^{2}) & -\cov_{\mu}(x,x^{2})\\
-\cov_{\mu}(x,x^{2}) & \Var_{\mu}(x)
\end{pmatrix}\begin{pmatrix}\cov_{\mu}(x,g)\\
\cov_{\mu}(x^{2},g)
\end{pmatrix}.
\end{align*}

If moreover $\int_{\R}xd\mu=\int_{\R}x^{3}d\mu=0$, the latter inequality writes
\begin{align*}
\cov(f,g)\geq\frac{1}{\var_{\mu}(x)}\cov(x,f)\cdot\cov(x,g)+\frac{1}{\var_{\mu}(x^{2})}\cov(x^{2},f)\cdot\cov(x^{2},g).
\end{align*}
\end{coro}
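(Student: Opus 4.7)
The plan is to recognize this corollary as a direct application of Corollary~\ref{cor:cheb} with $n=2$ and $\phi_1(x)=x$, $\phi_2(x)=x^2$, followed by an expansion of a $3\times 3$ determinant via Schur complement.

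First I would invoke the fact recalled just after Corollary~\ref{cor:cheb} (from \cite[Chapter 6, Example 4]{karlin:book}): for a smooth function $h$, the $4$-uple $(1,x,x^2,h)$ forms a Chebyshev system on $\R$ if and only if $h^{(3)}\geq 0$ everywhere. Under the hypotheses $f^{(3)}\geq 0$ and $g^{(3)}\geq 0$, the two $4$-uples $(1,x,x^2,f)$ and $(1,x,x^2,g)$ are therefore both Chebyshev systems, and the fourth moment hypothesis on $\mu$ ensures the required integrability. Applying Corollary~\ref{cor:cheb} with $F=(x,x^2,f)^T$ and $G=(x,x^2,g)^T$ then yields $\det(\Cov(F,G))\geq 0$.

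Next I would expand that determinant in block form. Write
\[
\Cov(F,G)=\begin{pmatrix} A & b \\ c^T & \Cov_\mu(f,g)\end{pmatrix},
\]
where
\[
A=\begin{pmatrix}\Var_\mu(x) & \Cov_\mu(x,x^2)\\ \Cov_\mu(x,x^2) & \Var_\mu(x^2)\end{pmatrix},\quad
b=\begin{pmatrix}\Cov_\mu(x,g)\\ \Cov_\mu(x^2,g)\end{pmatrix},\quad
c=\begin{pmatrix}\Cov_\mu(x,f)\\ \Cov_\mu(x^2,f)\end{pmatrix}.
\]
The Schur complement identity for a bordered matrix gives
\[
\det(\Cov(F,G))=\Cov_\mu(f,g)\,\det(A)-c^T\,\mathrm{adj}(A)\,b,
\]
and for the $2\times 2$ matrix $A$ one has explicitly
\[
\mathrm{adj}(A)=\begin{pmatrix}\Var_\mu(x^2) & -\Cov_\mu(x,x^2)\\ -\Cov_\mu(x,x^2) & \Var_\mu(x)\end{pmatrix}.
\]
Combined with $\det(A)=\Var_\mu(x^2)\Var_\mu(x)-\Cov_\mu(x,x^2)^2$, the inequality $\det(\Cov(F,G))\geq 0$ is exactly the first assertion of the corollary.

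Finally, under the additional assumption $\int_\R x\,d\mu=\int_\R x^3 d\mu=0$, one computes $\Cov_\mu(x,x^2)=\int x^3 d\mu-\int x\,d\mu\int x^2 d\mu=0$, so $A$ is diagonal with entries $\Var_\mu(x)$ and $\Var_\mu(x^2)$. The quadratic form on the right then collapses to $\Var_\mu(x^2)\,\Cov_\mu(x,f)\Cov_\mu(x,g)+\Var_\mu(x)\,\Cov_\mu(x^2,f)\Cov_\mu(x^2,g)$, and dividing by the positive quantity $\Var_\mu(x)\Var_\mu(x^2)$ yields the stated form. There is no serious obstacle; the only conceptual step is the Chebyshev characterization of non-negative third derivatives, which is already quoted in the paper.
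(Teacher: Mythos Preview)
Your argument is correct and matches the paper's intended proof, which simply invokes Corollary~\ref{cor:cheb} with $F=(x,x^2,f)^T$, $G=(x,x^2,g)^T$ and then computes $\det(\Cov(F,G))$; you have supplied the explicit Schur-complement expansion that the paper leaves as ``a computation''. The only minor caveat is that dividing by $\Var_\mu(x)\Var_\mu(x^2)$ in the last step tacitly excludes the degenerate cases where $\mu$ is supported on at most two symmetric points.
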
 

\

\section{The tensorization method for product measures}\label{sec:tens} 
We investigate here product measures on $\R^d$ with $d\geq 2$, through the use of 
the tensorization argument. This method consists in decomposing the covariance for the product measure by the one-dimensional   covariances of the marginals and then applying  the covariance inequalities previously obtained in dimension one.

\subsection{The tensorization decomposition of the covariance}
\begin{lem}\label{lem:usual-tens}
Let $\mu=\mu_{1}\otimes\dots\otimes\mu_{d}$ be a product measure.
For a function $f:\R^{d}\to\R$, set  
\[
f_{k}(x_{1},\dots x_{k})=\iint f(x_{1},\dots x_{d})d\mu_{k+1}(x_{k+1})\dots d\mu_{d}(x_{d}),
\]
for $1\leq k\leq d$, and set $f_{0}=\int fd\mu$. 
Then it holds,

\begin{equation}\label{eq:tensor}
 \cov_{\mu}(f,g)=\sum_{k=1}^{d}\iint\cov_{\mu_{k}}(f_{k},g_{k})d\mu_{1}(x_{1})\dots d\mu_{k-1}(x_{k-1}).
\end{equation}
\end{lem}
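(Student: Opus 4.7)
\textbf{Proof plan for Lemma \ref{lem:usual-tens}.} The plan is to use a telescoping argument together with Fubini's theorem, which is standard. First I would observe that, since $f_d = f$ and $g_d = g$ and $f_0 g_0 = (\int f \, d\mu)(\int g \, d\mu)$,
\[
\Cov_\mu(f,g) = \int f_d g_d \, d\mu - f_0 g_0 = \int \bigl( f_d g_d - f_0 g_0 \bigr) \, d\mu,
\]
and then telescope via
\[
f_d g_d - f_0 g_0 = \sum_{k=1}^d \bigl( f_k g_k - f_{k-1} g_{k-1} \bigr).
\]
Hence it suffices to show that for every $1 \le k \le d$,
\[
\int \bigl( f_k g_k - f_{k-1} g_{k-1} \bigr) \, d\mu = \int \Cov_{\mu_k}(f_k, g_k) \, d\mu_1(x_1) \cdots d\mu_{k-1}(x_{k-1}).
\]

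Next I would handle each term individually. The key observation, which follows directly from the definition, is the conditional-expectation-type relation
\[
f_{k-1}(x_1,\dots,x_{k-1}) = \int f_k(x_1,\dots,x_{k-1},x_k) \, d\mu_k(x_k),
\]
and the analogous identity for $g$. Since $f_k$ and $g_k$ do not depend on $x_{k+1}, \ldots, x_d$, Fubini's theorem (applied to the product measure $\mu_{k+1} \otimes \cdots \otimes \mu_d$) allows one to discard those variables, reducing the integral on the left to
\[
\int \cdots \int \bigl( f_k g_k - f_{k-1} g_{k-1} \bigr) \, d\mu_1(x_1) \cdots d\mu_k(x_k).
\]
Then, for fixed $(x_1,\dots,x_{k-1})$, the inner $\mu_k$-integration gives
\[
\int f_k g_k \, d\mu_k(x_k) - f_{k-1}(x_1,\dots,x_{k-1}) \, g_{k-1}(x_1,\dots,x_{k-1}) = \Cov_{\mu_k}(f_k, g_k),
\]
because $f_{k-1}$ and $g_{k-1}$ are constant in $x_k$ and equal to the $\mu_k$-means of $f_k$ and $g_k$ respectively.

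Finally, integrating this identity against $d\mu_1(x_1)\cdots d\mu_{k-1}(x_{k-1})$ and summing over $k=1,\dots,d$ yields the claimed decomposition \eqref{eq:tensor}. There is no genuine obstacle here; the only point requiring a minimal amount of care is keeping track of which variables each $f_k$ depends on when applying Fubini, and the integrability assumptions on $f$ and $g$ already stated in the paper justify all the exchanges of integration.
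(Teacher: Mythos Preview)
Your telescoping argument is correct and complete. The paper itself does not give a detailed proof: it simply notes that the identity is classical for the variance (citing \cite{ledoux-spin}) and then invokes the polarization identity $4\,\Cov(f,g)=\Var(f+g)-\Var(f-g)$ to pass to the covariance. Your direct computation bypasses that detour and proves the covariance version in one stroke; in fact, specializing your argument to $g=f$ is exactly how one would prove the variance statement the paper cites. So the two approaches are essentially the same underlying induction/telescoping, but your write-up is more self-contained.
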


In the above lemma, the function $f_k$ is the conditional expectation of $f$ knowing $(x_1,\dots,x_k)$ and $\cov_{\mu_{k}}(f_{k},g_{k})$ is the covariance with respect to the one-dimensional marginal $\mu_k$ of $f_k$ and $g_k$ ; that  is the function depending on $(x_1,\dots, x_{k-1})$ given by:
\[
\cov_{\mu_{k}}\big(x\mapsto f_{k}(x_1,\dots, x_{k-1}, x) , x\mapsto g_{k}(x_1,\dots, x_{k-1}, x)\big).
\]

This decomposition of the covariance  is well known and implies  the famous tensorization  property of the Poincar\'e inequality; see e.g. \cite{bakry-gentil-ledoux}. To be complete,  \eqref{eq:tensor} is stated for the variance in \cite{ledoux-spin}, but it also applies to the covariance due to the following polarization identity:
\[
4 \, \cov(f,g) = \Var(f+g) -\Var (f-g).
\]

\subsection{A weighted Hoeffding's covariance relation in dimension one.}
For the sequel, we shall need a slight generalization of the covariance relation of Lemma~\ref{thm:cov2} in dimension one, that we describe now.

Let $\mu$ be a probability measure on $\R$. Let $a$ be a positive function on $\R$ and let $A$ be the (centered) primitive of $a$, $A=\int a dx+c$. We assume that $a$ is such that $\Var_{\mu}(A)<+\infty$.
With these notations and  with also the same notations as in Section \ref{subsec:nice-relation-dim1}, we define 
\[
k_{a,a}(x,y):={a(x)}k(x,y){a(y)}
\]
and we set $Z_{a,a}=\iint k_{a,a}(x,y)dxdy.$ By Theorem \ref{thm:covA}, one gets
$
Z_{a,a}=\Var_{\mu}(A).
$ We will also consider the measure $\mu_{a,a}^{(1)}$, defined by
\[
d\mu_{a,a}^{(1)}(x,y)=\frac{k_{a,a}(x,y)}{\iint k_{a,a}(x',y')dx'dy'}dxdy.
\]
Since the kernel $k$ is totally positive,  by Corollary \ref{cor:totpos2},  one gets that the kernels $k_{a,a}$ are also totally positive.

The following lemma provides a generalization of Lemma \ref{thm:cov2}, which corresponds to the case $a\equiv1$.

\begin{lem}\label{thm:cov-mod-a} 
Let $f,g:\R\to \R$ be some absolutely continuous functions  that belong to $L^2(\mu)$.
 
 \begin{enumerate} 
 \item Then,
\begin{align}\label{weight_cov_dim1_conv_conv}
 \nonumber & \cov_{\mu}(f(x),g(x))\\
 =&Z_{a,a}\left[\cov_{\mu_{a,a}^{(1)}}\left(\frac{f'(x)}{a(x)},\frac{g'(y)}{a(y)}\right)+\frac{\Cov_{\mu}(f(x),A(x))}{Z_{a,a}}\,\frac{\Cov_{\mu}(g(x),A(x))}{Z_{a,a}}\right]. 
\end{align}

\item If moreover $ f=e^{-\phi}$, then
\begin{align}\label{weight_cov_dim1_logco_conv}
 \nonumber &   \cov_{\mu}(f(x),g(x))\\
 =&
 Z_{af,a}\left[\cov_{\mu_{af,a}^{(1)}}\left(\frac{-\phi'(x)}{a(x)},\frac{g'(y)}{a(y)}\right)+\frac{\Cov_{\mu}(f(x),A(x))}{Z_{af,a}}\,\frac{\Cov_{\mu}(F_a(x),g(x))}{Z_{af,a}}\right].
\end{align}
\item If moreover $g=e^{-\psi}$, then
\begin{align} \label{weight_cov_dim1_logco_conv2}
\nonumber & \cov_{\mu}(f(x),g(x))\\
=&
 Z_{af,ag}\left[\cov_{\mu_{af,ag}^{(1)}}\left(\frac{\phi'(x)}{a(x)},\frac{\psi'(y)}{a(y)}\right)+\frac{\Cov_{\mu}(f(x),G_a(x))}{Z_{af,ag}}\,\frac{\Cov_{\mu}(F_a(x),g(x))}{Z_{af,ag}}\right].  
\end{align}
\end{enumerate} 
Here $F_a$ and $G_a$ denotes respectively the primitives of $ af $ and $ag$ and the constants $Z_{af,a}$ and $Z_{af,ag}$ are defined as in Lemma \ref{thm:cov2}. 
\end{lem}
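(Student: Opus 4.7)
All three identities are weighted analogues of Lemma~\ref{thm:cov2} and the proofs mirror that one almost verbatim: the only change is that the kernel $k$ is replaced by the reweighted kernel $k_{a,a}$ for part (1), by $k_{af,a}$ for part (2), and by $k_{af,ag}$ for part (3). The key algebraic manipulation for part (1) is to rewrite Hoeffding's identity \eqref{eq:cov-k} as
\[
\cov_\mu(f,g) = \iint f'(x) k(x,y) g'(y)\, dxdy = \iint \frac{f'(x)}{a(x)}\, k_{a,a}(x,y)\, \frac{g'(y)}{a(y)}\, dxdy,
\]
then divide and multiply by $Z_{a,a} = \iint k_{a,a}$ to turn the right-hand side into $Z_{a,a}$ times an expectation under $\mu_{a,a}^{(1)}$, and finally apply the decomposition $\E[XY] = \cov(X,Y) + \E[X]\E[Y]$.

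To identify the two resulting marginal expectations I would integrate out one variable in $k_{a,a}$ by a second application of Hoeffding's identity: since $A' = a$, applying \eqref{eq:cov-k} to the pair $(f,A)$ gives $\iint f'(x) k(x,y) a(y)\, dxdy = \cov_\mu(f,A)$, so $\E_{\mu_{a,a}^{(1)}}[f'(x)/a(x)] = \cov_\mu(f,A)/Z_{a,a}$, and similarly for $g$. The identity $Z_{a,a} = \Var_\mu(A)$ that is implicit along the way is itself \eqref{eq:cov-k} applied to the pair $(A,A)$.

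For part (2) I would use $f = e^{-\phi}$ to write $f'(x) = -\phi'(x) f(x)$, so that a factor of $f(x)$ is absorbed on the left of the kernel, producing $k_{af,a}$ after the same weighting trick. The two marginal expectations under $\mu_{af,a}^{(1)}$ are then computed in the same way, using both $A' = a$ and $F_a' = af$, and they yield $\cov_\mu(f,A)/Z_{af,a}$ and $\cov_\mu(F_a,g)/Z_{af,a}$, respectively. Part (3) is strictly analogous: additionally substituting $g'(y) = -\psi'(y) g(y)$ absorbs a factor of $g(y)$ on the right, producing $k_{af,ag}$ and the associated marginal covariances $\cov_\mu(f,G_a)$ and $\cov_\mu(F_a,g)$.

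The only real obstacle is notational: keeping track of the three different normalizing constants $Z_{a,a}$, $Z_{af,a}$, $Z_{af,ag}$ and verifying in each of the three statements that the two marginal expectations under $\mu_{a,a}^{(1)}$, $\mu_{af,a}^{(1)}$ or $\mu_{af,ag}^{(1)}$ indeed correspond to the covariances appearing on the right-hand side. Once the substitutions $A' = a$, $F_a' = af$, $G_a' = ag$ are used and Hoeffding's identity is applied systematically, the computation is routine.
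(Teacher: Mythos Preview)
Your proposal is correct and follows exactly the approach the paper has in mind: the paper does not even spell out a separate proof of this lemma, presenting it as a direct weighted generalization of Lemma~\ref{thm:cov2}, whose proof proceeds precisely by writing Hoeffding's identity with the appropriate kernel, splitting the resulting integral into a covariance plus a product of marginal expectations, and identifying each marginal expectation via a second application of Hoeffding's identity. Your substitutions $A'=a$, $F_a'=af$, $G_a'=ag$ and the absorption of $f'=-\phi' f$, $g'=-\psi' g$ into the kernel are exactly the bookkeeping steps needed to pass from $k$ to $k_{a,a}$, $k_{af,a}$, $k_{af,ag}$.
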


\subsection{Proofs of Theorems \ref{thm:Hu-produit-A}, \ref{thm:Hu-tens-incond-A} and \ref{thm:Harge-Royen-tens} }

We first consider the proof of Theorem \ref{thm:Hu-produit-A}. We then state a similar result in Theorem \ref{thm:tens-V}, but with slightly different assumptions.

\begin{proof}[Proof of Theorem \ref{thm:Hu-produit-A}] First, we assume that $f$ and $g$ are such that all
the quantities in \eqref{eq:cond-a-i} and \eqref{eq:cond-a-ij} are non-negative. 
By the tensorization of the covariance  \eqref{eq:tensor} and the first covariance relation of Lemma \ref{thm:cov-mod-a}, one has
\begin{align}
\nonumber & \Cov_{\mu}(f,g)\\
= & \sum_{k=1}^{d}\frac{1}{Z_{k,a_k,a_{k}}}\iint\cov_{\mu_{k}}(f_{k},A_{k}(x_{k}))\cov_{\mu_{k}}(g_{k},A_{k}(x_{k}))d\mu_{1}(x_{1})\dots d\mu_{k-1}(x_{k-1})\label{l1}\\
 +& \sum_{k=1}^{d}Z_{k,a_k,a_{k}}\iint 
\cov_{\mu_{k,(a_k,a_{k})}^{(1)}}\left(  \frac{\partial_{k}f_{k}(x_{k})}{a_{k}(x_{k})} ,\frac{\partial_{k}g_{k}(y_{k})}{a_{k}(y_{k})}\right)
 d\mu_{1}(x_{1})\dots d\mu_{k-1}(x_{k-1}),\label{l2}
\end{align} 
where more precisely
\[
\partial_{k}f_{k}(x_{k}) =\partial_{k}f_{k}(x_{1},\dots,x_{k-1},x_{k}) \textrm{ and } \partial_{k}g_{k}(y_{k}) = \partial_{k}g_{k}(x_{1},\dots,x_{k-1},y_{k}).\]

We first prove that the sum in display \eqref{l2} is non-negative.
The assumption \eqref{eq:cond-a-i} implies that 
 both $(x_k,y_k)\mapsto \frac{\partial_{k}f(x_{k})}{a_{k}(x_{k})}$ and $(x_k,y_k)\mapsto \frac{\partial_{k}g(y_{k})}{a_{k}(y_{k})}$ are coordinatewise increasing on $\R^2$
for every fixed $x_{1},\dots,x_{k-1}$. Since by Corollary \ref{cor:totpos}, the measure $\mu_{k,(a_k,a_{k})}^{(1)}$
satisfies the FKG criterion on $\R^{2}$, the term \eqref{l2} is
non-negative.
We now turn to the sum in display \eqref{l1}. With a similar notation
for $g$, we set 
\begin{align*}
F_{k,a_{k}}(x_{1},\dots x_{k-1})&=\cov_{\mu_{k}}(f_{k},A_{k}(x_{k})).\\
\end{align*}
Since the hypotheses allow to exchange  derivation and integrals, one has 
for $1\leq i \leq (k-1)$, 
\begin{align*}
& \partial_i \, \cov_{\mu_{k}}(f_{k},A_{k}(x_{k}))\\
=& \iint_{x_k,y_k}   \left( \iint_{x_{k+1}, \dots,x_d} \partial_{i,k} f(x) d\mu_{k+1} (x_{k+1})\dots d\mu_d(x_d) \right) k_{\mu_k}(x_k,y_k)  a_k(y_k) d x_k dy_k.
\end{align*}
In particular,
the assumptions \eqref{eq:cond-a-ij} give that  the above integrands are non-negative. This implies that the functions $F_{k,a_{k}}$
and $G_{k,a_{k}}$ are both coordinate increasing on $\R^{k-1}$.
By the standard FKG inequality for product measures, one gets that 
\begin{align*}
 & \iint F_{k,a_{k}}\;G_{k,a_{k}}d\mu_{1}(x_{1})\dots d\mu_{k-1}(x_{k-1})\\
\geq & \iint F_{k,a_{k}}d\mu_{1}(x_{1})\dots d\mu_{k-1}(x_{k-1})\cdot\iint G_{k,a_{k}}d\mu_{1}(x_{1})\dots d\mu_{k-1}(x_{k-1})\\
= & \;\cov_{\mu}(f(x),A_{k}(x_{k}))\cdot\cov_{\mu}(g(x),A_{k}(x_{k})).
\end{align*}
Summing over the index $k$ ends the proof in the case of non-negative signs in assumptions \eqref{eq:cond-a-i} and \eqref{eq:cond-a-ij}.
Finally, analyzing the above proof, one sees that it is still valid for  general signs.
Indeed, under the general case of assumption \eqref{eq:cond-a-i}, the functions $(x_k,y_k)\mapsto \frac{\partial_{k}f(x_{k})}{a_{k}(x_{k})}$ and $(x_k,y_k)\mapsto \frac{\partial_{k}g(y_{k})}{a_{k}(y_{k})}$ are either both coordinatewise increasing 
or both  coordinatewise decreasing  on $\R^2$ and thus have a non-negative covariance with respect to the measure  $\mu_{k,(a_k,a_{k})}^{(1)}$.  Secondly, the last argument relies  on  the FKG inequality for product measures. In this case, the FKG inequality is in fact also valid   if the  functions $F_{k,a_k}$ and $G_{k,a_k}$ are monotone along coordinates, with the same monotonicity along each coordinate. This is the case under the general assumption \eqref{eq:cond-a-ij} and the result follows.

\end{proof}

As announced in the beginning of this section, we also obtain with this tensorization approach, a similar result under slightly different conditions.

\begin{theo}\label{thm:tens-V} 
Let $\mu$ be a product measure on $\R^{d}$. Assume that the marginals $\mu_k$ are absolutely continuous with respect to the Lebesgue measure, with positive densities $e^{-V_k}$, for smooth potentials $V_k$.
Let $f,g: \R^d \to \R$ and assume
 that for each $1\leq k\leq d$, the signs of 
\begin{equation}\label{cond-V-i}
\partial_{k}\left(\frac{\partial_{k}f(x)}{a_{k}(x_{k})}\right)\textrm{ and }\partial_{k}\left(\frac{\partial_{k}g(x)}{a_{k}(x_{k})}\right)
\end{equation}
are constant and equal and that for each   $(j,k)$ with $1\leq j<k\leq d$, the
signs of 
\begin{equation}\label{cond-V-ij}
\partial_{j,k}\left(f(x)\frac{A_{k}(x_{k})}{V_{k}'(x_{k})}\right)\textrm{ and }\partial_{j,k}\left(g(x)\frac{A_{k}(x_{k})}{V_{k}'(x_{k})}\right)
\end{equation}
are also constant and equal. 
Assume furthermore the following technical assumption:
\begin{equation} \label{eq:HypoTech}
\lim_{x_k \to \pm \infty} f_k(x_1,\ldots,x_k) \frac{A_k(x_k)}{V'_k(x_k)} e^{-V_k(x_k)} = 0,
\end{equation}
where $f_k$ is defined in Lemma~\ref{lem:usual-tens}. 
Then, it holds
\[
\cov_{\mu}(f,g)\geq\sum_{k=1}^{d}\frac{1}{\Var_{\mu_{k}}(A_{k})}\cov_{\mu}(f(x),A_{k}(x_{k}))\cdot\cov_{\mu}(g(x),A_{k}(x_{k})).
\]
\end{theo}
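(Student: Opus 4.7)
The strategy I would use mirrors that of Theorem \ref{thm:Hu-produit-A}: tensorize via Lemma \ref{lem:usual-tens}, invoke the one-dimensional weighted Hoeffding relation \eqref{weight_cov_dim1_conv_conv} of Lemma \ref{thm:cov-mod-a} with weight $a_k$ on the $k$-th marginal, and recombine. Writing $f_k, g_k$ for the conditional expectations, one has
\[
\cov_\mu(f,g) = \sum_{k=1}^d \int \cov_{\mu_k}(f_k,g_k)\, d\mu_1(x_1)\cdots d\mu_{k-1}(x_{k-1}),
\]
and each inner covariance decomposes, using $Z_{k,a_k,a_k} = \Var_{\mu_k}(A_k)$, as
\[
\cov_{\mu_k}(f_k,g_k) = \Var_{\mu_k}(A_k)\, \cov_{\mu^{(1)}_{k,(a_k,a_k)}}\!\!\left(\frac{\partial_k f_k}{a_k(x_k)},\frac{\partial_k g_k}{a_k(y_k)}\right) + \frac{\cov_{\mu_k}(f_k,A_k)\,\cov_{\mu_k}(g_k,A_k)}{\Var_{\mu_k}(A_k)}.
\]

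The first term on the right-hand side is non-negative: by \eqref{cond-V-i}, the functions $(x_k,y_k)\mapsto \partial_k f_k/a_k(x_k)$ and $(x_k,y_k)\mapsto \partial_k g_k/a_k(y_k)$ are coordinatewise monotone on $\R^2$ in the same direction, and the kernel $k_{a_k,a_k}$ is totally positive (Corollary \ref{cor:totpos}), so Theorem \ref{thm:FKG} applies. Integration against $\mu_1\otimes\cdots\otimes\mu_{k-1}$ preserves this sign, so this piece can be dropped in the final inequality.

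The main new ingredient, compared to the proof of Theorem \ref{thm:Hu-produit-A}, lies in handling the second piece. Set $F_k(x_1,\ldots,x_{k-1}) = \cov_{\mu_k}(f_k,A_k)$ and $G_k$ analogously. I would show that $F_k$ and $G_k$ are coordinatewise monotone in the same direction. Differentiating under the integral gives, for $j<k$,
\[
\partial_{x_j} F_k = \int \partial_{x_j} f_k(x_1,\ldots,x_k)\, A_k(x_k)\, e^{-V_k(x_k)}\, dx_k.
\]
Writing $A_k e^{-V_k} = -(A_k/V'_k)\,\partial_{x_k}(e^{-V_k})$ (possible since $V'_k\neq 0$) and integrating by parts, the boundary terms vanish by \eqref{eq:HypoTech} — after commuting $\partial_{x_j}$ with the limit $x_k\to\pm\infty$, which is precisely where the technical assumption does its work — leaving
\[
\partial_{x_j} F_k = \int \partial_{x_j}\partial_{x_k}\!\!\left(f(x)\,\frac{A_k(x_k)}{V'_k(x_k)}\right) d\mu_k(x_k)\cdots d\mu_d(x_d),
\]
where I used that $A_k/V'_k$ depends only on $x_k$ (so the derivatives commute through the product) and unfolded $f_k$ into an integral over $(x_{k+1},\ldots,x_d)$. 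By \eqref{cond-V-ij} the integrand has constant sign in $x$, matching that of the analogous expression for $g$.

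To conclude, since $F_k$ and $G_k$ are coordinatewise monotone with matching direction along each coordinate $j<k$, the standard FKG inequality for the product measure $\mu_1\otimes\cdots\otimes\mu_{k-1}$ gives
\[
\int F_k G_k\, d\mu_1\cdots d\mu_{k-1} \ge \cov_\mu(f,A_k(x_k))\, \cov_\mu(g,A_k(x_k)),
\]
using that $A_k$ is $\mu_k$-centered and depends only on $x_k$ to identify $\int F_k\, d\mu_1\cdots d\mu_{k-1}$ with $\cov_\mu(f,A_k(x_k))$. Summing over $k$ and combining with the non-negative contribution from the FKG term on $\mu^{(1)}_{k,(a_k,a_k)}$ yields the claimed bound. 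I expect the main obstacle to be rigorously justifying the integration by parts — specifically, transferring \eqref{eq:HypoTech} from $f_k$ to $\partial_{x_j}f_k$ via an exchange of derivative and boundary limit — since this is exactly the step that converts the cross-derivative condition \eqref{cond-V-ij} (stated on $\partial_{j,k}(f\cdot A_k/V'_k)$ rather than on $\partial_{j,k}f$) into the monotonicity of $F_k$ and $G_k$ needed to invoke FKG.
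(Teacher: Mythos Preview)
Your proposal is correct and follows essentially the same route as the paper: tensorize, apply the weighted one-dimensional Hoeffding relation, discard the FKG-positive term, and show that $F_k,G_k$ are coordinatewise monotone via an integration by parts that produces $\partial_{j,k}\bigl(f\,A_k/V'_k\bigr)$.

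The one small difference is the order of operations, and the paper's ordering resolves exactly the worry you raise at the end. The paper first integrates by parts in $x_k$ \emph{before} differentiating in $x_j$: since $A_k$ is $\mu_k$-centered,
\[
F_{k,a_k}(x_1,\dots,x_{k-1}) = \int f_k\,A_k\,e^{-V_k}\,dx_k = -\int f_k\,\frac{A_k}{V'_k}\,\partial_{x_k}\bigl(e^{-V_k}\bigr)\,dx_k = \int \partial_{x_k}\!\left(f_k\,\frac{A_k}{V'_k}\right) d\mu_k,
\]
where the boundary term is killed directly by \eqref{eq:HypoTech} as stated, for $f_k$ itself. Only afterwards does one differentiate in $x_j$ (exchanging $\partial_{x_j}$ with the $\mu_k,\dots,\mu_d$ integrals, which is covered by the standing regularity hypotheses), arriving at the same integrand $\partial_{j,k}\bigl(f\,A_k/V'_k\bigr)$ you obtain. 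This avoids having to transfer the boundary assumption from $f_k$ to $\partial_{x_j}f_k$.
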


Note that  since each $A_k$ vanishes exactly  in one point, the hypothesis in \eqref{cond-V-ij} forces each $V_k$ to be  unimodal (in the sense that  $V_k$ has only one zero).
In particular, if each potential $V_k$ is strictly convex, one can specialize the result to the case $A_k=V_k'$ or equivalently $a_k=V_k''$. 
With this specific choice, Theorems  \ref{thm:tens-V} and  \ref{thm:Hu-produit-A} coincide.


\begin{proof}[Proof of Theorem \ref{thm:tens-V}]
The proof is similar to the one of Theorem \ref{thm:Hu-produit-A}. The only difference is that we use integration by parts to get a different representation of  $F_{k,a_{k}}$. We obtain
\[
F_{k,a_{k}}(x_{1},\dots x_{k-1})=\cov_{\mu_{k}}\left(f_{k},V_{k}'(x_{k})\frac{A_{k}(x_{k})}{V_{k}'(x_{k})}\right)=\int\partial_{k}\left(f_{k}\frac{A_{k}(x_{k})}{V_{k}'(x_{k})}\right)d\mu_{k}(x_{k}).
\]
Notice that the bracket terms in the integration by parts are zero due to Assumption~\eqref{eq:HypoTech}.
Furthermore, by Assumption \eqref{cond-V-ij}, the two functions $F_{k,a_k}$ and $G_{k,a_k}$ are coordinatewise monotone with the same kind of monotony along each coordinate. The result follows.
\end{proof}

\

We now add the symmetry assumptions and turn to the proofs of Theorem \ref{thm:Hu-tens-incond-A}.
and \ref{thm:Harge-Royen-tens}.

\begin{proof}[Proof of Theorem \ref{thm:Hu-tens-incond-A}]  We start
by the same covariance formulae, given in \eqref{l1}, \eqref{l2}, as in the proof of Theorem  \ref{thm:Hu-produit-A}. As previously, in view of assumption \eqref{eq:cond-l2-idem}, all terms in  \eqref{l2} are non-negative. Now, we show that under the symmetry assumptions, all the terms in \eqref{l1} vanish.
Indeed, since the product measure $\mu$ is symmetric, the function  $f$ is unconditional and the function $a_k$ are even, we have that, for any $1\leq k \leq d$, and any $x_1,\dots x_{k-1}$, the functions
\[
x_k \mapsto f_k(x_1,\dots x_{k-1},x_k)
\]
are even and that the primitive functions $A_k$ are odd.
Hence, it holds that
\[
\cov_{\mu_k}(f_k ,A_k(x_k)) =0  \textrm{ for each } 1\leq k \leq d \textrm{ and all } x \in \R^d
\]
and all the terms in \eqref{l1} vanish. The result follows.
\end{proof}

\begin{proof}[Proof of Theorem \ref{thm:Harge-Royen-tens}(1)]
With the same notations as above, writing $\phi_k=-\ln f_k$ and using the second point of Lemma \ref{thm:cov-mod-a}, one has
\begin{align*}
  & \Cov_{\mu}(f,g)\\%
  =&\sum_{k=1}^{d} \iint  \frac{1}{Z_{k,f_k,1}} \cov_{\mu_{k}}(f_{k},x_{k})
\left(\iint f_k(x_k)k_{\mu_k}(x_k,y_k) \partial_k g_k(y_k)dx_kdy_k\right)
d\mu_{1}\dots d\mu_{k-1}   \\ 
  +&\sum_{k=1}^{d}\iint Z_{k,f_k,1}\cov_{\mu_{k,(f_k, 1)}^{(1)}}\left(  -\partial_{k}\phi_{k}(x_{k}) ,\partial_{k}g_{k}(y_{k})\right)d\mu_{1}\dots d\mu_{k-1}.  
\end{align*}
Since $f$ is unconditional, for each fixed $(x_1,\dots,x_{k-1})$, the function $f_k$ is even, and thus
\[
\cov_{\mu_{k}}(f_{k},x_{k})=0
\] 
and the terms of the sum in the right-hand side of the above equality vanish.
Furthermore, as $f$ and the $\mu_i$ are log-concave, by stability through marginalization  of log-concavity (Prékopa's theorem), the functions $f_k$ are log-concave, meaning that the functions $\phi_k$ are convex. 
Since  by Corollary \ref{cor:totpos}, the measures   $\mu_{k,(f_k, 1)}^{(1)}$ satisfy the FKG inequality on $\R^2$, one has
for each fixed $(x_{1},\dots,x_{k-1})\in \R^{k-1}$,
\[
\cov_{\mu_{k,(f_k, 1)}^{(1)}}\left(  -\partial_{k}\phi_{k}(x_{1},\dots,x_{k-1},x_{k}) ,\partial_{k}g_{k}(x_{1},\dots,x_{k-1},y_{k})\right)\leq 0
\]
and the proof is complete.
\end{proof}
The proof of Theorem \ref{thm:Harge-Royen-tens}(2) is given in the next section. 
One can note that we do not state a version of Theorem \ref{thm:Harge-Royen-tens} with the function $a_k$ or $A_k$. The reason is that we do not know a natural hypothesis on $f$ that would induce a sign for the quantities $\partial_k \left(\frac{\partial_k \phi_k}{a_k} \right)$.

\section{The quasi-concave case}\label{sec:quasi-concave}

This section is devoted to the proof of Theorem \ref{thm:main-1d}(3) and  Theorem \ref{thm:Harge-Royen-tens}(2), related to the quasi-concave case. This assumption indeed requires different techniques than in the rest of the paper.  The result in dimension one is obtained through the so-called layer-cake representation of the functions (see \eqref{eq:quasi-concave}) and the result in dimension $d\geq 2$  is then obtained by tensorisation. A similar result already appears in \cite{Schechtman}, but  as far as we know,  the statement of  Theorem \ref{thm:Harge-Royen-tens}(2) is new.

\

Recall that a quasi-concave function $f$ on $\RR^d$ is a real-valued function that satisfies, for any $x,y\in \RR^d$ and any $\lambda \in [0,1]$,
\[
f(\lambda x + (1-\lambda)y)\geq \min \left\{ f(x), f(y)\right\}.
\]

An equivalent formulation of quasi-concavity consists in requiring that the upper level sets of the function are convex. 
In the following, we make use of a weaker notion than quasi-concavity, that we term ``coordinatewise quasi-concavity'': 
\begin{defi}
A function $f:\R^d \to \R$ is said to be \emph{coordinatewise quasi-concave} if 
 for all $(x_1,...,x_{i-1},x_{i+1},...,x_d)\in \RR^{d-1}$, the functions
 \[
 x_i \in \R \mapsto f(x_1,...,x_{i-1},x_i,x_{i+1},...,x_d) \in \R
 \]
 are quasi-concave.
\end{defi}

 Another characterization is thus that  for any $\lambda \in [0,1]$, any $(x_1,...,x_{i-1},x_{i+1},...,x_d)\in \RR^{d-1}$ and any $x,y\in \R$,
\begin{align*}
    & f(x_1,...,x_{i-1},\lambda x_i + (1-\lambda)y_i,x_{i+1},...,x_d)\\
& \geq \min  \left\{ f(x_1,...,x_{i-1},x_i,x_{i+1},...,x_d), f(x_1,...,x_{i-1}, y_i,x_{i+1},...,x_d)\right\}.
\end{align*}

The above definition and its characterization directly imply that  quasi-concave functions are coordinatewise quasi-concave, but the converse is not true.

Note that the interpretation in terms of convex upper level sets does not hold anymore for the notion of coordinatewise quasi-concavity. But still, the upper level sets of a coordinatewise quasi-concave function are connected sets.

We now turn to the proof of Theorem \ref{thm:main-1d} (3) in dimension one.
\begin{proof}[Theorem \ref{thm:main-1d} (3)] 
Let $f$ and $g$ be two non-negative quasi-concave even function on $\R$.
We write, for $x\in \R$,
\begin{equation}\label{eq:quasi-concave}
    f(x)= \int_0^\infty \bone_{A_t}(x) dt \textrm{ and }  g(x)= \int_0^\infty \bone_{B_t}(x) dt 
\end{equation}
where $A_t$ and $B_t$ for $t\geq 0$ are the the level sets of $f$ and $g$ defined by
\[
A_t:= \{x\in \R, f(x)\geq t \} \textrm{ and } B_t:= \{x\in \R, f(x)\geq t\} .
\]
The key point is here that since $f$ and $g$ are quasi-concave and even, the sets $A_s$ and $B_t$ are symmetric intervals on $\R$ and therefore, for each $s,t \geq 0$, $A_s\subset B_t$ or $B_t \subset A_s$.
Therefore by Fubini-Tonelli, one has 
\begin{align*}
    \int f(x) g(x) d\mu(x) &= \int_x \int_{s=0}^\infty \int_{t=0}^\infty  \bone_{A_s}(x)  \bone_{B_t}(x) ds dt d\mu(x)\\
    &= \int_{s=0}^\infty \int_{t=0}^\infty  \mu (A_s \cap B_t) ds dt\\
    &= \int_{s=0}^\infty \int_{t=0}^\infty \min( \mu (A_s),  \mu(B_t)) ds dt\\
    & \geq   \int_{s=0}^\infty \int_{t=0}^\infty  \mu (A_s) \mu  (B_t) ds dt\\
    &= \int  f(x) d\mu(x) \; \int g(x) d\mu(x); 
\end{align*}
which is precisely the desired inequality.
\end{proof}

\

We  now prove  Theorem \ref{thm:Harge-Royen-tens}(2)  by the tenzorisation method. The main argument is ensured by the following lemma, which states the stability of unconditional coordinatewise quasi-concavity by marginalization. Its proof can be found below.
\begin{lem}\label{lem:marg_quasi_con}
Consider an integer $d\geq 2$ and take $k\in\left\{1,...,d-1\right\}$. Assume that a function $f$ on $\RR^d$ is unconditional and coordinatewise quasi-concave. Then the function
\[
f_{k}(x_{1},\dots x_{k})=\int f(x_{1},\dots x_{d})d\mu_{k+1}(x_{k+1}) \dots d\mu_{d}(x_d)
\]
is coordinatewise quasi-concave and unconditional.
\end{lem}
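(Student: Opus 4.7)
The plan is to establish the two properties of $f_k$---unconditionality and coordinatewise quasi-concavity---separately, the first being straightforward and the second resting on a single structural observation about even quasi-concave functions on $\R$.

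First, I would verify unconditionality of $f_k$, which follows directly from unconditionality of $f$. For any signs $(\varepsilon_1,\ldots,\varepsilon_k)\in\{-1,1\}^k$, applying unconditionality of $f$ with the extended sign vector $(\varepsilon_1,\ldots,\varepsilon_k,1,\ldots,1)$ gives
\[
f(\varepsilon_1 x_1,\ldots,\varepsilon_k x_k,x_{k+1},\ldots,x_d) = f(x_1,\ldots,x_d),
\]
and integrating against $d\mu_{k+1}(x_{k+1})\cdots d\mu_d(x_d)$ yields $f_k(\varepsilon_1 x_1,\ldots,\varepsilon_k x_k)=f_k(x_1,\ldots,x_k)$.

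The key observation I would then exploit is that an even quasi-concave function $\phi:\R\to\R$ is necessarily non-increasing on $[0,\infty)$; conversely, any even function non-increasing on $[0,\infty)$ is quasi-concave, since its upper level sets are symmetric intervals around $0$ and hence convex. The non-increasing property follows by writing, for $0\le s\le t$, $s=\alpha t+(1-\alpha)(-t)$ with $\alpha=(t+s)/(2t)\in[0,1]$, and applying quasi-concavity together with $\phi(-t)=\phi(t)$.

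The main step is then as follows. Fix an index $1\le i\le k$ and external coordinates $(x_1,\ldots,x_{i-1},x_{i+1},\ldots,x_k)$. For every fixed value of the integration variables $(x_{k+1},\ldots,x_d)$, the slice
\[
t\mapsto f(x_1,\ldots,x_{i-1},t,x_{i+1},\ldots,x_d)
\]
is quasi-concave in $t$ (by coordinatewise quasi-concavity of $f$) and even in $t$ (by unconditionality of $f$ applied only with $\varepsilon_i=-1$). By the key observation, this slice is non-increasing in $|t|$, and this pointwise monotonicity is preserved by integration against $d\mu_{k+1}\cdots d\mu_d$. Hence $t\mapsto f_k(x_1,\ldots,x_{i-1},t,x_{i+1},\ldots,x_k)$ is non-increasing in $|t|$; combined with its evenness in $t$ (from the unconditionality of $f_k$ already established), this shows it is quasi-concave, completing the proof. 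The only real obstacle I anticipate is recognizing the reduction ``even $+$ quasi-concave $=$ unimodal around zero'', after which the integration step is routine and no assumption on the marginals $\mu_j$ is needed.
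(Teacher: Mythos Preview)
Your proof is correct and follows essentially the same route as the paper's. Both arguments reduce to the observation that an even quasi-concave function of one variable is non-increasing in $|t|$, then integrate this pointwise monotonicity; the only cosmetic difference is that you invoke the converse (even $+$ non-increasing in $|t|$ $\Rightarrow$ quasi-concave) to conclude, whereas the paper verifies the quasi-concavity inequality for $f_k$ directly from the monotonicity.
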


\begin{proof}[Proof of  Theorem \ref{thm:Harge-Royen-tens}(2)]
Let $f$ and $g$ be unconditional and coordinatewise quasi-concave functions.
Let us first recall the standard  tensorization formula:
\[
\cov_{\mu}(f,g)=\sum_{k=1}^{d}\iint\cov_{\mu_{k}}(f_{k},g_{k})d\mu_{1}\dots d\mu_{k-1}.
\]
By Lemma \ref{lem:marg_quasi_con} above, for any $(x_1,...,x_{k-1})\in \R^{k-1}$ the functions $f_k(x_1,...,x_{k-1}, \cdot)$ and $g_k(x_1,...,x_{k-1}, \cdot)$ are even and quasi-concave on $\RR$.
By Theorem \ref{thm:main-1d} (3), one has $\cov_{\mu_{k}}(f_k,g_k) \geq 0$ and the result follows.
\end{proof}

\begin{proof}[Proof of Lemma \ref{lem:marg_quasi_con}]
Unconditionality of $f_k$ directly follows from unconditionality of $f$. As for the coordinatewise quasi-concavity, we will make the reasoning for the first coordinate $x_1$ and the arguments readily extend to the other coordinates. Take a pair $(x_1,y_1)$ such that $|x_1|\leq|y_1|$. Assume without loss of generality that $y_1\geq0$ (otherwise replace it by $-y_1$). As $f$ is unconditional and coordinatewise quasi-concave, for any $y\in[-y_1,y_1]$ and for any $(x_2,...,x_d)\in \RR^{d-1}$, we have
\begin{align*}
   f(y,x_2,\dots,x_d) & \geq  \min \left\{ f(-y_1,x_2,...,x_d), f(y_1,x_2,...,x_d)\right\} \\
                    & =  f(y_1,x_2,...,x_d) ,
\end{align*}
where the latter equality follows from unconditionality of $f$. In particular, as $x_1 \in [-y_1,y_1]$, we have for any $(x_2,...,x_d)\in \RR^{d-1}$,
\[
 f(x_1,x_2,...,x_d) \geq  f(y_1,x_2,...,x_d).
\]
This gives, for any $\lambda \in [0,1]$,
\begin{align*}
    f_{k}(\lambda x_{1}+(1-\lambda)y_1,x_2,\dots x_{k})
    & = \int f(\lambda x_{1}+(1-\lambda)y_1,x_2,\dots x_{d})d\mu_{k+1}\dots d\mu_{d}\\
    & \geq  \int f(y_1,x_2,\dots x_{d})d\mu_{k+1}\dots d\mu_{d}\\
    & = \min \left\{ f_{k}(x_{1},x_2,\dots x_{k}), f_{k}(y_1,x_2,\dots x_{k}) \right\}.
\end{align*}
By symmetry, the case $|y_1|\leq|x_1|$ follows, which finishes the proof.
\end{proof}

\section{A global approach for product measures}\label{sec:global}

We provide in this section another proof of Theorem \ref{thm:Hu-produit-A} and we provide the proof of  Theorem \ref{thm:Harge-Royen-global}.
The first  main ingredient that will be used is a generalization of Hoeffding's covariance identity  \eqref{eq:cov-k} for product measures. 
 The two other ingredients are a generalization to product measures of the  Hoeffding's covariance relations of Lemmas \ref{thm:cov2} and \ref{thm:cov-mod-a} and the use of FKG inequalities.

\subsection{Duplication and a generalization of Hoeffding's covariance identity}
We first present in Lemma \ref{lem:var-prod} a duplication argument  for the  covariance of a product measure.  Similar duplication representations are well known, see e.g. \cite{chatterjee:stein}.  
 We then deduce in Proposition \ref{prop:var-prod} a  generalization of Hoeffding's covariance identity for product measures.

 \begin{lem}\label{lem:var-prod}\
Let $\mu=\mu_1 \otimes \cdots \otimes \mu_d$ be a product measure on $\R^d$.
Under suitable integrable conditions one has 

\begin{equation}\label{eq:cov-prod}
{\rm Cov}_\mu(f,g)= \frac{1}{2} \sum_{i=1}^d \E [\Delta_i f(X,X') \tilde \Delta_i g(X,X')],
\end{equation}
where $X$ and $X'$ are two independent random variables of law $\mu$,
\[
\Delta_i f(X,X') = f(X_1,\dots,X_i , \dots , X_d) - f(X_1,\dots,X_i', \dots , X_d)
\]
and 
\[
\tilde \Delta_i g(X,X') = g(X_1,\dots, X_i ,X_{i+1}' \dots , X_d') - g(X_1,\dots,X_i', X_{i+1}' \dots , X_d').
\]
\end{lem}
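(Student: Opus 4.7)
My strategy is to combine the tensorization identity of Lemma~\ref{lem:usual-tens} with the elementary one-dimensional duplication formula
\[
\cov_\nu(u,v)=\tfrac{1}{2}\,\E\big[(u(Y)-u(Y'))(v(Y)-v(Y'))\big],\qquad Y,Y'\text{ iid}\sim\nu,
\]
which is nothing but the $d=1$ case of the lemma and follows immediately by expanding the right-hand side. Applying this to the inner conditional covariance $\cov_{\mu_k}(f_k,g_k)$ at fixed $(x_1,\dots,x_{k-1})$ and then invoking the tensorization identity yields
\[
\cov_\mu(f,g)=\tfrac{1}{2}\sum_{k=1}^{d}\E\Big[\big(f_k(X_1,\dots,X_{k-1},X_k)-f_k(X_1,\dots,X_{k-1},X_k')\big)\big(g_k(X_1,\dots,X_{k-1},X_k)-g_k(X_1,\dots,X_{k-1},X_k')\big)\Big].
\]
It then suffices to identify the $k$-th expectation with $\E[\Delta_k f\cdot\tilde\Delta_k g]$.

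\textbf{Identification via conditional independence.} Let $\mathcal G_k=\sigma(X_1,\dots,X_k,X_k')$. Since $(X_{k+1},\dots,X_d)$ and $(X_{k+1}',\dots,X_d')$ are iid from $\mu_{k+1}\otimes\cdots\otimes\mu_d$ and independent of $\mathcal G_k$, taking the conditional expectation in the definition of $\Delta_k f$ marginalizes $X_{k+1},\dots,X_d$ against the product measure and therefore produces $f_k$, giving
\[
\E[\Delta_k f\mid\mathcal G_k]=f_k(X_1,\dots,X_k)-f_k(X_1,\dots,X_{k-1},X_k');
\]
the same argument with $X_{k+1}',\dots,X_d'$ gives
\[
\E[\tilde\Delta_k g\mid\mathcal G_k]=g_k(X_1,\dots,X_k)-g_k(X_1,\dots,X_{k-1},X_k').
\]
Moreover, beyond $\mathcal G_k$, $\Delta_k f$ depends only on $(X_{k+1},\dots,X_d)$ whereas $\tilde\Delta_k g$ depends only on $(X_{k+1}',\dots,X_d')$; these two tails are independent, so $\Delta_k f$ and $\tilde\Delta_k g$ are conditionally independent given $\mathcal G_k$. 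Consequently
\[
\E[\Delta_k f\cdot\tilde\Delta_k g]=\E\big[\E[\Delta_k f\mid\mathcal G_k]\cdot\E[\tilde\Delta_k g\mid\mathcal G_k]\big],
\]
which is exactly the $k$-th expectation in the display above. Summing over $k$ completes the proof.

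\textbf{Alternative route and the main obstacle.} A purely computational alternative is to expand each $\Delta_i f\cdot\tilde\Delta_i g$ into four products and compute each expectation directly by Fubini: because every coordinate outside $X_1,\dots,X_{i-1}$ appears in at most one of the two factors, the product structure of $\mu$ collapses each of the four expectations either to $\E[f_i g_i]$ or to $\E[f_{i-1}g_{i-1}]$, yielding $\tfrac{1}{2}\E[\Delta_i f\cdot\tilde\Delta_i g]=\E[f_i g_i]-\E[f_{i-1}g_{i-1}]$ and a telescoping sum to $\cov_\mu(f,g)$. The only genuine obstacle in either approach is the notational bookkeeping of primed versus unprimed indices; the asymmetric shapes of $\Delta_i$ (unprimed tail for $f$) and $\tilde\Delta_i$ (primed tail for $g$) are precisely engineered so that the coordinates requiring integration appear in exactly one factor, which is what makes every factorization close cleanly.
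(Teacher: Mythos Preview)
Your proof is correct, but it takes a genuinely different route from the paper's. The paper's argument is self-contained: it writes $\cov_\mu(f,g)=\E[f(X)(g(X)-g(X'))]$, telescopes $g(X)-g(X')=\sum_i\tilde\Delta_i g$, and then for each $i$ uses the invariance of $\mu\otimes\mu$ under the swap $X_i\leftrightarrow X_i'$ to symmetrize $\E[f(X)\tilde\Delta_i g]$ into $\tfrac12\E[\Delta_i f\cdot\tilde\Delta_i g]$. No conditional expectations, no appeal to the tensorization lemma.

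Your primary argument instead \emph{imports} Lemma~\ref{lem:usual-tens}, reduces to the one-dimensional duplication identity, and then uses a clean conditional-independence observation to undo the marginalization and recover $\Delta_k f$, $\tilde\Delta_k g$. This is perfectly valid and arguably makes the role of the primed/unprimed tails more transparent, but it is logically less economical since Lemma~\ref{lem:usual-tens} is itself a tensorization statement of comparable depth. Your ``alternative route'' (expanding the four cross terms and telescoping $\E[f_ig_i]-\E[f_{i-1}g_{i-1}]$) is closer in spirit to the paper, though the paper sidesteps computing all four terms via the swap trick.
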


\begin{proof}[Proof of Lemma \ref{lem:var-prod}] 
Let $X'$ be an independent copy of $X$ with law $\mu$. By symmetrization and then the use of a telescopic sum, one has
\begin{eqnarray*}
\cov_\mu(f,g)&=& \E[f(X) (g(X) -g(X')) ]\\
               &= & \sum_{i=1}^d  \E[f(X) \tilde \Delta_i g(X,X')  ]\\
               &=& \sum_{i=1}^d  \E\left[ U_i(X,X') \right], 
\end{eqnarray*}
where we   define $U_i(X,X')= f(X) \tilde \Delta_i g(X,X')$.  Let us  denote $(X,X')^{\{j\}}$ to be the random vector  given by
\[
(X,X')^{\{j\}}= \left(( X_1, \dots,X_{j-1}, X_j', X_{j+1}, \dots, X_d), ( X_1, \dots,X_{j-1}, X_j, X_{j+1}', \dots, X_d')\right)
\]
 We also write $(X,X')^{\{j\}}= \left(X^{\{j\}},X'^{\{j\}} \right)$ with the slight abuse of notation that $ X^{\{j\}}$ depends on $(X,X')$.           
Since $\mu$ is a product measure, for each $i$, $(X,X')^{\{i\}}$ is also of law $\mu\otimes \mu$ and thus
\begin{eqnarray*}
\E\left[ U_i  (X,X') \right] &=& \E\left[ U_i \left((X,X')^{\{i\}}\right) \right]\\
               &= &  - \E[f(X^{\{i\}}) \tilde \Delta_i g (X,X')  ]\\
\end{eqnarray*}
since $\tilde \Delta_i g \left( (X,X')^{\{i\}}\right) = -\tilde \Delta_i g (X,X')$ and thus
\[
\E\left[ U_i  (X,X') \right] = \frac{1}{2} \E\left[ U_i  (X,X') \right] + \frac{1}{2}  \E\left[ U_i \left((X,X')^{\{i\}}\right) \right]
= \frac{1}{2}\E [\Delta_i f(X,X') \tilde \Delta_i g(X,X')]
\]
and the result follows.
\end{proof}

From the duplication argument, one obtains the following generalization to product measures of Hoeffding's covariance identity.
 \begin{prop} \label{prop:var-prod}
 Let $\mu=\mu_1\otimes \dots \otimes \mu_d$ be a product measure on $\R^d$. 
Let $f,g:\R^d \to \R$ be some coordinatewise absolutely continuous functions in $L^2(\mu)$, then
\begin{equation}\label{eq:var-prod}
 \cov_\mu(f,g)
= \sum_{i=1}^d  \iint_{
x,x' \in  \R^d}  
\partial_i f(x) 
k_{\mu_i}(x_i,x_i')  
\partial_i g(\underline{x}_{i-1},\overline{x'}_{i}) 
dx_i dx_i'   d\mu(x_{-i}) d\mu(x'_{-i})
\end{equation}
where for $x_i, x_i'\in \R$, $k_{\mu_i}$is the standard Hoeffding kernel for the marginal $\mu_i$:
\[
k_{\mu_i}(x_i,x_i')= F_{\mu_i} (x_i\wedge x_i') -F_{\mu_i}(x_i) F_{\mu_i}(x_i')
\]
and for $x,x'\in \R^d$,
$(\underline{x}_{i-1},\overline{x'}_{i})= (x_1,\dots,{x}_{i-1}, x_i', \dots,{x}'_{d})$,
$x_{-i}=(x_1,\dots,x_{i-1}, x_{i+1},\dots x_d)$ and $d\mu(x_{-i})= d\mu_1(x_1) \dots d \mu_{i-1}(x_{i-1}) d \mu_{i+1}(x_{i+1})\dots  d \mu_{d}(x_{d})$.

 \end{prop}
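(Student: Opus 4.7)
The strategy is to combine the duplication identity of Lemma~\ref{lem:var-prod} with the one-dimensional Hoeffding identity of Theorem~\ref{thm:covA}, handling each summand $\E[\Delta_i f \cdot \tilde\Delta_i g]$ by freezing all coordinates except the $i$-th ones of $X$ and $X'$ and recognizing the resulting conditional expectation as twice a one-dimensional covariance with respect to $\mu_i$.

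Starting from $\cov_\mu(f,g) = \tfrac{1}{2}\sum_{i=1}^d \E[\Delta_i f(X,X')\,\tilde\Delta_i g(X,X')]$, fix $i$. The random variable $\Delta_i f$ depends on $X_i, X_i'$ and on $X_j$ for $j\neq i$, while $\tilde\Delta_i g$ depends on $X_i, X_i'$, on $X_j$ for $j<i$, and on $X_j'$ for $j>i$; note that $X_1',\dots,X_{i-1}'$ appear in neither. Let $\mathcal{F}_i$ denote the $\sigma$-algebra generated by all of these ``frozen'' coordinates, and define
$$\varphi(u) = f(X_1,\dots,X_{i-1},u,X_{i+1},\dots,X_d),\qquad \psi(v) = g(X_1,\dots,X_{i-1},v,X_{i+1}',\dots,X_d'),$$
so that $\Delta_i f = \varphi(X_i)-\varphi(X_i')$ and $\tilde\Delta_i g = \psi(X_i)-\psi(X_i')$. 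Since $X_i,X_i'$ are i.i.d.\ of law $\mu_i$ and independent of $\mathcal{F}_i$, expanding the product of differences gives
$$\E\bigl[\Delta_i f\cdot\tilde\Delta_i g \,\big|\, \mathcal{F}_i\bigr] = 2\,\cov_{\mu_i}(\varphi,\psi).$$

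Applying Theorem~\ref{thm:covA} to this one-dimensional covariance, and noting that $\varphi'(u)=\partial_i f(X_1,\dots,X_{i-1},u,X_{i+1},\dots,X_d)$ and $\psi'(v)=\partial_i g(X_1,\dots,X_{i-1},v,X_{i+1}',\dots,X_d')$, yields
$$\cov_{\mu_i}(\varphi,\psi) = \iint \partial_i f(\dots,u,\dots)\,k_{\mu_i}(u,v)\,\partial_i g(\dots,v,\dots)\,du\,dv.$$
Taking expectation over $\mathcal{F}_i$ integrates $x_j$ against $\mu_j$ for $j\neq i$ and $x_j'$ against $\mu_j$ for $j>i$; padding with dummy integrations $\int d\mu_j(x_j')$ for $j<i$ (which trivially contribute $1$ since the integrand does not depend on those variables) produces precisely the $i$-th term of \eqref{eq:var-prod}, with the substitution $(\underline{x}_{i-1},\overline{x'}_i)=(x_1,\dots,x_{i-1},x_i',x_{i+1}',\dots,x_d')$ arising from the definition of $\psi$. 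Summing over $i=1,\dots,d$ concludes.

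There is no genuine analytic obstacle here; Fubini's theorem is justified throughout by the integrability assumption in $L^2(\mu)$ together with coordinatewise absolute continuity, and differentiation under the integral sign is legitimate for the same reason. The only point that demands care is the \emph{bookkeeping} of which coordinates of $x$ versus $x'$ enter each factor, which is exactly what the asymmetric notation $(\underline{x}_{i-1},\overline{x'}_i)$ records; this asymmetry is forced by the asymmetric definition of $\tilde\Delta_i g$ in Lemma~\ref{lem:var-prod}.
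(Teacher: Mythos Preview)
Your proof is correct and follows essentially the same approach as the paper: both start from the duplication identity of Lemma~\ref{lem:var-prod} and reduce each summand to a one-dimensional computation. The only difference is packaging: you condition on the frozen coordinates, recognize $\E[\Delta_i f\cdot\tilde\Delta_i g\mid\mathcal{F}_i]=2\cov_{\mu_i}(\varphi,\psi)$, and then invoke Theorem~\ref{thm:covA} as a black box, whereas the paper writes the differences as integrals of indicators and recomputes the kernel $k_{\mu_i}$ directly from $\iint(\mathbf 1_{\{s_i\le x_i\}}-\mathbf 1_{\{s_i\le x_i'\}})(\mathbf 1_{\{t_i\le x_i\}}-\mathbf 1_{\{t_i\le x_i'\}})\,d\mu_i(x_i)d\mu_i(x_i')=2k_{\mu_i}(s_i,t_i)$. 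Your route is slightly more modular; the paper's is more self-contained---but they are the same argument.
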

\begin{proof}[Proof of Proposition \ref{prop:var-prod}]
We consider one term in the sum of the covariance formula of Lemma \ref{lem:var-prod}. We have
\begin{align*}
& \E [\Delta_i f(X,X') \tilde \Delta_i g(X,X')]\\
=
  &  \iint_{x,x' \in  \R^d} 
\begin{pmatrix} f(\underline{x}_{i-1}, x_i, \overline{x}_{i+1})\\
                   - f ( \underline{x}_{i-1},x_i', \overline{x}_{i+1}) 
                 \end{pmatrix}
                \begin{pmatrix} g(\underline{x}_{i-1}, x_i, \overline{x'}_{i+1})\\
                   - g( \underline{x}_{i-1},x_i', \overline{x'}_{i+1}) 
                 \end{pmatrix}  d\mu(x)d\mu(x')\\
=&  \iint_{x,x' \in  \R^d}  \iint_{s_i,t_i\in \R} 
 \partial_if(\underline{x}_{i-1}, s_i, \overline{x}_{i+1})  \partial_i g(\underline{x}_{i-1}, t_i, \overline{x'}_{i+1}) \\
 & \hspace{3cm} \left( \bone_{\{s_i\leq x_i\}} -  \bone_{\{s_i\leq x_i'\}}  \right) \left( \bone_{\{t_i\leq x_i\}} -  \bone_{\{t_i\leq x_i'\}} \right)  ds_i dt_i d\mu(x)d\mu(x').
\end{align*}
Furthermore,
\begin{eqnarray*}
 & &\iint_{x_i,x_i' \in \R}  \left( \bone_{\{s_i\leq x_i\}} -  \bone_{\{s_i\leq x_i'\}}  \right) \left( \bone_{\{t_i\leq x_i\}} -  \bone_{\{t_i\leq x_i'\}} \right) d\mu_i(x_i) d\mu_i(x_i')\\
&= & 2 \left( \P(X_i\geq \max(s_i,t_i)) - \P(X_i\geq s_i) \P(X_i\geq t_i )\right) \\
&= & 2 \left(F_{\mu_i} (s_i\wedge t_i) -F_{\mu_i}(s_i) F_{\mu_i}(t_i)\right)\\
&= & 2 k_{\mu_i}(s_i,t_i)
\end{eqnarray*}
 and the proof follows by Fubini theorem and by a change in the name of the letters in the integral.
\end{proof}


We now study some symmetry properties of this covariance representation.

\begin{lem}\label{lem:kmu-sym}
Assume $\mu_i$ is a symmetric one dimensional measure, then the kernel $k_{\mu_i}$ is even, that is 
\[k_{\mu_i}(-s_i,-t_i) = k_{\mu_i} (s_i,t_i).\]
\end{lem}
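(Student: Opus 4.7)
The proof will be a direct computation. The plan is to rewrite $k_{\mu_i}(-s,-t)$ using two elementary facts: (i) the symmetry of $\mu_i$ translates into the symmetry relation $F_{\mu_i}(-x) = 1 - F_{\mu_i}(x)$ on the cumulative distribution function (at least at continuity points; one can reduce to this case by standard approximation, or just assume $\mu_i$ is atomless, which is the relevant setting here), and (ii) the trivial set-theoretic identity $\{s\wedge t,\, s\vee t\} = \{s,t\}$, which yields $F_{\mu_i}(s\wedge t)+F_{\mu_i}(s\vee t) = F_{\mu_i}(s)+F_{\mu_i}(t)$.

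First I would observe that $-s \wedge -t = -(s\vee t)$, so that
\[
k_{\mu_i}(-s,-t) = F_{\mu_i}(-(s\vee t)) - F_{\mu_i}(-s)F_{\mu_i}(-t).
\]
Then, applying the symmetry relation to each $F_{\mu_i}(-\cdot)$ term, this equals
\[
\bigl(1-F_{\mu_i}(s\vee t)\bigr) - \bigl(1-F_{\mu_i}(s)\bigr)\bigl(1-F_{\mu_i}(t)\bigr),
\]
which expands to $F_{\mu_i}(s) + F_{\mu_i}(t) - F_{\mu_i}(s\vee t) - F_{\mu_i}(s)F_{\mu_i}(t)$. Finally, using $F_{\mu_i}(s)+F_{\mu_i}(t)-F_{\mu_i}(s\vee t) = F_{\mu_i}(s\wedge t)$ yields $k_{\mu_i}(s,t)$, as desired.

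There is essentially no obstacle here; the only subtle point is ensuring the symmetry identity $F_{\mu_i}(-x) = 1 - F_{\mu_i}(x)$ is used correctly. For a symmetric measure without atoms it holds everywhere; if $\mu_i$ has atoms one should work with right- or left-continuous versions consistently, but since $k_{\mu_i}$ only enters through integrals this does not affect the statement.
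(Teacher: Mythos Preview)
Your proof is correct and essentially the same direct computation as the paper's; the only difference is that the paper assumes without loss of generality that $s\le t$ (so $s\wedge t=s$ and $s\vee t=t$), which slightly shortens the algebra and avoids invoking the identity $F_{\mu_i}(s)+F_{\mu_i}(t)=F_{\mu_i}(s\wedge t)+F_{\mu_i}(s\vee t)$ explicitly.
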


\begin{proof}
Without loss of generality assume that $s\leq t$,  then $-t\leq -s$, and 
\begin{eqnarray*}
k_{\mu_i} (-s,-t) &=& F_{\mu_i} (-t) - F_{\mu_i} (-s) F_{\mu_i} (-t) \\
                    &=& (1-F_{\mu_i} (t)) - (1- F_{\mu_i} (s)) (1 - F_{\mu_i} (t))\\
                    &= & F_{\mu_i} (s) - F_{\mu_i} (s) F_{\mu_i} (t)\\
                    &=& k_{\mu_i} (s,t).
\end{eqnarray*}
\end{proof}
As a consequence, one obtains the following result.
\begin{lem} \label{lem:even}
Assume that $\mu$ is a symmetric product measure on $\R^d$. 
Let $f,g:\R^n \to \R$ be two even functions. Then, for any $1\leq i \leq d$,
\[
\iint_{
x,x' \in  \R^d}  
\partial_i f(x) 
k_{\mu_i}(x_i,x_i')  
 g(\underline{x}_{i-1},\overline{x'}_{i}) 
dx_i dx_i'   d\mu(x_{-i}) d\mu(x'_{-i})=0.
\]
\end{lem}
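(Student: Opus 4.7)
The plan is to prove the identity by a global symmetry argument: I will apply the joint reflection $(x,x')\mapsto(-x,-x')$ on $\R^d\times\R^d$ in the integral and check that the integrand acquires an overall sign $-1$ while the integration measure stays invariant. Denoting the integral by $I$, this will force $I=-I$, hence $I=0$.

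First I will record the parity of each factor of the integrand under the reflection. Since $f$ is even, differentiating the identity $f(-y)=f(y)$ with respect to $y_i$ gives $(\partial_i f)(-y)=-\partial_i f(y)$, so $\partial_i f$ is odd on $\R^d$. By Lemma~\ref{lem:kmu-sym}, the kernel satisfies $k_{\mu_i}(-x_i,-x_i')=k_{\mu_i}(x_i,x_i')$. For the third factor, evenness of $g$ yields
\[
g(-x_1,\dots,-x_{i-1},-x_i',-x'_{i+1},\dots,-x'_d)=g(\underline{x}_{i-1},\overline{x'}_{i}),
\]
so this factor is invariant under the joint reflection. Finally, $\mu$ being a symmetric product measure means each marginal $\mu_j$ is invariant under $x_j\mapsto -x_j$, and Lebesgue measure $dx_i\,dx_i'$ is trivially invariant; consequently the full measure $dx_i\,dx_i'\,d\mu(x_{-i})\,d\mu(x'_{-i})$ is unchanged under $(x,x')\mapsto(-x,-x')$.

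Combining these observations, after the change of variable only the factor $\partial_i f$ contributes a sign, and all other pieces of the integrand together with the integration measure are preserved. This gives $I=-I$, so $I=0$. There is no genuine obstacle; the only step requiring a small verification is the parity of $\partial_i f$, obtained by differentiating the evenness relation of $f$ via the chain rule.
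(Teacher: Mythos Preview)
Your proof is correct and follows exactly the same approach as the paper: apply the change of variables $(x,x')\mapsto(-x,-x')$, use that $\partial_i f$ is odd, $g$ is even, $k_{\mu_i}$ is even (Lemma~\ref{lem:kmu-sym}), and the measures are invariant, to conclude $I=-I$.
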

\begin{proof}
The result follows from using the change of variables $(a,b)=(-x,-x')$ on $\R^{2d}$ and the fact that $\partial_i f $ is odd, $g$ is even and that the kernel $k_{\mu_i}$ is even.
\end{proof}
We also derive the following formulas, that will be instrumental in our proofs.
\begin{lem}\label{lem:cov-prod-ei}
Assume $\mu$ is a  product measure on $\R^d$. For each $1\leq k \leq d$, let $a_k(x_k)$ be a positive function on $\R$ and let $A_k$ be a primitive, centered with respect to $\mu_k$.
Let $f:\R^n \to \R$ be a coordinatewise absolutely continuous function. Then for  any $1\leq i\leq d $, one has
\[
\iint_{
x,x' \in  \R^d}
\partial_i f(x)  
k_{\mu_i}(x_i,x_i')  a_i(x_i')dx_i dx_i'   d\mu(x_{-i}) d\mu(x'_{-i})= \cov_\mu(f,A_i(x_i)).
\]
In particular,
\[
\iint_{
x,x' \in  \R^d}
\partial_i f(x)  
k_{\mu_i}(x_i,x_i')  dx_i dx_i'   d\mu(x_{-i}) d\mu(x'_{-i})= \cov_\mu(f,x_i),
\]
where, by a slight abuse of notation, $x_i$ stands for the $i$th-coordinate function. It also holds 
\[
\iint_{
x,x' \in  \R^d}
k_{\mu_i}(x_i,x_i') dx_i dx_i'   d\mu(x_{-i}) d\mu(x'_{-i})=  \Var_\mu(x_i)=\Var(\mu_i).
\]
\end{lem}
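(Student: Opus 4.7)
The plan is to observe that all three identities in Lemma~\ref{lem:cov-prod-ei} are immediate consequences of the duplication representation \eqref{eq:var-prod} established in Proposition~\ref{prop:var-prod}, combined with elementary properties of covariance. The work is essentially to pick the right test function $g$ and simplify.

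First, I would prove the leading identity by applying Proposition~\ref{prop:var-prod} with the function $g(x) := A_i(x_i)$, which depends only on the $i$-th coordinate. Since $\partial_j g \equiv 0$ for $j \neq i$, only the $i$-th term in the sum \eqref{eq:var-prod} survives. Moreover, $\partial_i g(y) = A_i'(y_i) = a_i(y_i)$, and since we are to evaluate $\partial_i g$ at the point $(\underline{x}_{i-1}, \overline{x'}_i) = (x_1, \dots, x_{i-1}, x_i', x_{i+1}', \dots, x_d')$, whose $i$-th coordinate is $x_i'$, this becomes $a_i(x_i')$. This yields exactly
\[
\cov_\mu(f, A_i(x_i)) = \iint \partial_i f(x)\, k_{\mu_i}(x_i, x_i')\, a_i(x_i')\, dx_i\, dx_i'\, d\mu(x_{-i})\, d\mu(x_{-i}'),
\]
which is the first claim. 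The integrability of the various quantities is ensured by the assumptions $A_i \in L^2(\mu_i)$ and $f \in L^2(\mu)$ made at the start of the paper.

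Next, the ``in particular'' formula is the specialization $a_i \equiv 1$: then $A_i(x_i) = x_i - \int x_i\, d\mu_i(x_i)$, and since subtracting a constant from one argument of a covariance leaves it unchanged, $\cov_\mu(f, A_i(x_i)) = \cov_\mu(f, x_i)$. Finally, the last identity is obtained either by further specializing to $f(x) = x_i$ (so $\partial_i f \equiv 1$ and the left-hand side equals $\cov_\mu(x_i, x_i) = \Var(\mu_i)$), or equivalently by noting that $k_{\mu_i}(x_i, x_i')$ depends only on the variables $x_i, x_i'$; integrating out $x_{-i}, x_{-i}'$ against the probability measure $\mu$ reduces the claim to $\iint k_{\mu_i}(x_i, x_i')\, dx_i\, dx_i' = \Var(\mu_i)$, which is exactly \eqref{eq:varmu}.

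There is no real obstacle here: the lemma is genuinely a direct corollary of Proposition~\ref{prop:var-prod}, and the only point requiring a little care is the bookkeeping of the argument at which $\partial_i g$ is evaluated in the duplication formula, so that the factor $a_i(x_i')$ (and not $a_i(x_i)$) appears.
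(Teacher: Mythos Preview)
Your proof is correct and follows exactly the same approach as the paper: apply Proposition~\ref{prop:var-prod} with $g(x)=A_i(x_i)$ so that only the $i$-th term of the sum survives, then specialize. Your write-up is in fact more detailed than the paper's own one-line justification.
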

\begin{proof}
The proof is a direct application of Proposition \ref{prop:var-prod} with   $g(x)=A_i(x_i)$, noticing that only one term in the sum is different from zero. 
\end{proof}

\subsection{Hoeffding's covariance relation for product measures}
The main result here is Lemma \ref{lem:cov2-product} where  a similar relation as in Lemma \ref{thm:cov2} is given for product measures.

Let $\mu=\mu_{1}\otimes\dots\otimes\mu_{d}$ be a product measure
and write $\Gamma=\Gamma_{\mu}$ its covariance matrix.  Since $\mu$ is a product measure, it is diagonal with $\Gamma_{i,i}=\Var_{\mu}(x_{i})=\cov_{\mu}(x_{i},x_{i})$. 

Since the kernels $k_{\mu_i}$ are non-negative, one can introduce the probability measures on $\R^{2d}$, defined for $1\leq i \leq d$ by

\[
d\mu_{(i)}^{(1)}(x,y)=\frac{1}{\Gamma_{i,i}} k_{\mu_i}(x_i,y_i) dx_i dy_i   d\mu(x_{-i}) d\mu(y_{-i}).
\]
If $f$ and $g$ are positive and integrable, we also introduce the following probability measures,
\[
d\mu_{(i),f}^{(1)}(x,x')=\frac{1}{Z_{i,f}} f(x) k_{\mu_i}(x_i,x_i') dx_i dx'_i   d\mu(x_{-i}) d\mu(x'_{-i}),
\]
with 
\[
Z_{i,f}= \iint_{x,x'} f(x)k_{\mu_i}(x_i,x'_i) dx_i dx'_i   d\mu(x_{-i}) d\mu(x'_{-i}) 
\]
and 
\[
d\mu_{(i),f,g}^{(1)}(x,x')=\frac{ 1}{Z_{i,f,g}} f(x)    
k_{\mu_i}(x_i,y_i)g( \underline{x}_{i-1}, \overline{x'}_{i}) dx_i dx'_i   d\mu(x_{-i}) d\mu(x'_{-i}),
\]
with 
\[
Z_{i,f,g}= \iint_{x,x'}  f(x)   k_{\mu_i}(x_i,x'_i) g( \underline{x}_{i-1}, \overline{x'}_{i}) dx_i dx'_i   d\mu(x_{-i}) d\mu(x'_{-i}).
\]
The quantity $Z_{i,f}$ can still be written as a covariance with respect to $\mu$:
$Z_{i,f}= \cov_\mu(F_i(x), x_i)$ where $F_i$ is a function such that $\partial_i F_i(x)= f(x)$. This is not anymore the case for $Z_{i,f,g}$.

In the case of a product measure $\mu$, Lemma \ref{thm:cov2}
generalizes as follows.

\begin{lem}\label{lem:cov2-product} Let $f,g:\R^{d}\to\R$ be in $L^2(\mu)$ and coordinatewise absolutely continuous.
\begin{enumerate}
    \item 
Then, 
\begin{align*}
 \cov_\mu(f,g)    
= &\sum_{i=1}^d  {\Gamma_{i,i}} \cov_{\mu_{(i)}^{(1)}} (\partial_i f(x) ,\partial_i g( \underline{x}_{i-1}, \overline{x'}_{i}))\\
 & +\sum_{i=1}^d
 \frac{1}{\Gamma_{i,i}}
 \cov_\mu(f(x),x_i) \cov_\mu(g(x),x_i).\\
\end{align*}
\item If moreover $f=e^{-\phi}$, then 
\begin{align*}
\Cov_{\mu}(f(x),g(x))  = &\sum_{i=1}^d Z_{i,f}  \Cov_{\mu_{(i),f}^{(1)}}(-\partial_{i} \phi(x),\partial_i g(\underline{x}_{i-1},\overline{x'}_{i}) )\\
&+\sum_{i=1}^d \Cov_{\mu}(f(x),x_{i})\\
 & \hspace{1.6cm} \times \left(\iint f(x) \frac{k_{\mu_i}^{(1)}(x_i,x_i')}{\Gamma_{i,i}} \partial_{i}g (\underline{x}_{i-1},\overline{x'}_{i})) dx_i dx_i'   d\mu(x_{-i}) d\mu(x'_{-i})\right).
\end{align*}
In particular, if  moreover $f$ is orthogonal to  the linear functions $x_i$, $1\leq i\leq d$,
\[
\Cov_{\mu}(f(x),g(x))  = \sum_{i=1}^d Z_{i,f}  \Cov_{\mu_{(i),f}^{(1)}}(-\partial_{i} \phi(x),\partial_i g(\underline{x}_{i-1},\overline{x'}_{i}) ).
\]

\item If $f=e^{-\phi}$ and  $g=e^{-\psi}$,

\begin{align*}
& \Cov_{\mu}(f(x),g(x)) \\
= & \sum_{i=1}^d Z_{i,f,g}  \Cov_{\mu_{(i),f,g}^{(1)}}(\partial_{i} \phi(x),\partial_i \psi(\underline{x}_{i-1},\overline{x'}_{i}) )\\
&+ \sum_{i=1}^d Z_{i,f,g} 
\left(\iint\partial_{i}f(x)\frac{k_{\mu_i}^{(1)}(x_i,x_i')}{Z_{i,f,g} }  g (\underline{x}_{i-1},\overline{x'}_{i})) dx_i dx_i'   d\mu(x_{-i}) d\mu(x'_{-i}) \right)\\
& \hspace{1.4cm}\times 
\left(\iint f(x) \frac{k_{\mu_i}^{(1)}(x_i,x_i')}{Z_{i,f,g}} \partial_{i}g (\underline{x}_{i-1},\overline{x'}_{i})) dx_i dx_i'   d\mu(x_{-i}) d\mu(x'_{-i})\right).\\
\end{align*}
In particular, if the measure $\mu$ is symmetric and if both $f$ and $g$ are even, then 
\[
\Cov_{\mu}(f(x),g(x)) =  \sum_{i=1}^d Z_{i,f,g}  \Cov_{\mu_{(i),f,g}^{(1)}}(\partial_{i} \phi(x),\partial_i \psi(\underline{x}_{i-1},\overline{x'}_{i}) ).
\]
\end{enumerate}
\end{lem}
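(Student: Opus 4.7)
The plan is to derive each of the three identities by starting from the covariance representation of Proposition \ref{prop:var-prod}, rewriting the $i$-th term as an expectation under the auxiliary probability measure indicated by the item (namely $\mu_{(i)}^{(1)}$, $\mu_{(i),f}^{(1)}$ or $\mu_{(i),f,g}^{(1)}$), then splitting that expectation into a covariance plus a product of two single expectations. The single expectations are to be identified with one-dimensional $\mu$-covariances by means of Lemma \ref{lem:cov-prod-ei}. This is exactly the dimension-one procedure used in Lemma \ref{thm:cov2}, applied coordinate by coordinate.

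For part (1), Proposition \ref{prop:var-prod} gives $\cov_\mu(f,g) = \sum_{i} I_i$ with $I_i = \iint \partial_i f(x) k_{\mu_i}(x_i,x_i') \partial_i g(\underline{x}_{i-1},\overline{x'}_i)\, dx_i dx_i' d\mu(x_{-i}) d\mu(x'_{-i})$, and $I_i = \Gamma_{i,i}\, \mathbb{E}_{\mu_{(i)}^{(1)}}[\partial_i f(x)\, \partial_i g(\underline{x}_{i-1},\overline{x'}_i)]$. Writing this expectation as $\cov_{\mu_{(i)}^{(1)}}(\cdot,\cdot)$ plus a product of expectations and invoking Lemma \ref{lem:cov-prod-ei} yields $\mathbb{E}_{\mu_{(i)}^{(1)}}[\partial_i f(x)] = \cov_\mu(f,x_i)/\Gamma_{i,i}$; for the $g$-factor, a relabeling using the symmetry $k_{\mu_i}(x_i,x_i')=k_{\mu_i}(x_i',x_i)$ and the fact that the unused $x_{-i}$/$x'_{-i}$ coordinates integrate to one against $\mu$ reduces the expectation to the same form, giving $\cov_\mu(g,x_i)/\Gamma_{i,i}$. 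Summing over $i$ produces the stated identity.

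For part (2), I would substitute $\partial_i f = -f\,\partial_i\phi$ inside $I_i$, which recasts $I_i$ as $Z_{i,f}\,\mathbb{E}_{\mu_{(i),f}^{(1)}}[-\partial_i\phi(x)\,\partial_i g(\underline{x}_{i-1},\overline{x'}_i)]$. Splitting into covariance plus a product of expectations and using Lemma \ref{lem:cov-prod-ei} once more to identify $Z_{i,f}\,\mathbb{E}_{\mu_{(i),f}^{(1)}}[-\partial_i\phi] = \iint \partial_i f(x)\,k_{\mu_i}\,dx_idx_i'd\mu(x_{-i})d\mu(x'_{-i}) = \cov_\mu(f,x_i)$ gives the claimed formula (the factor multiplying $\cov_\mu(f,x_i)$ being the remaining $\partial_i g$-expectation under $\mu_{(i),f}^{(1)}/Z_{i,f}$). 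The ``in particular'' statement is then immediate from the hypothesis $\cov_\mu(f,x_i)=0$. Part (3) is handled identically after substituting both $\partial_i f = -f\partial_i\phi$ and $\partial_i g(\underline{x}_{i-1},\overline{x'}_i) = -g(\underline{x}_{i-1},\overline{x'}_i)\,\partial_i\psi(\underline{x}_{i-1},\overline{x'}_i)$; for the final ``in particular'' statement under symmetry, Lemma \ref{lem:even} directly forces each of the two integrals in the extra product term to vanish, since $\partial_i f$ is odd while $g$ is even (and vice versa).

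No step presents a substantive obstacle: once Proposition \ref{prop:var-prod} and Lemma \ref{lem:cov-prod-ei} are in place, the proof is a bookkeeping exercise that parallels the scalar argument of Lemma \ref{thm:cov2}. The only mild point of care is the mismatched argument $(\underline{x}_{i-1},\overline{x'}_i)$ inside $\partial_i g$, which I would handle by the symmetry of $k_{\mu_i}$ and the product structure of $\mu$ to legitimately invoke Lemma \ref{lem:cov-prod-ei}; beyond that, the derivation is just a reorganization of the same integral.
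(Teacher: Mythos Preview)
Your proposal is correct and follows essentially the same route as the paper: start from Proposition~\ref{prop:var-prod}, split each term as an expectation under the auxiliary measure into a covariance plus a product of marginals, and identify the marginals via Lemma~\ref{lem:cov-prod-ei} (with the symmetry of $k_{\mu_i}$ and the product structure of $\mu$ handling the shifted argument in $\partial_i g$). The paper only writes out item~(1) explicitly and declares the remaining items ``somehow similar''; your treatment of (2) and (3), including the use of Lemma~\ref{lem:even} for the symmetric ``in particular'' clause, matches the intended argument.
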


In fact, we shall use in the sequel the following slight weighted generalization, similar to the one of  Lemma \ref{thm:cov-mod-a}.

\begin{lem}\label{lem:cov2-product-mod}
Let $f,g:\R^{d}\to\R$ be in $L^2(\mu)$ and coordinatewise absolutely continuous.
\begin{enumerate}
    \item 
Then,  
\begin{align*}
 \cov_\mu(f,g)    
= &\sum_{i=1}^d  {\var_{\mu_i}(A_i)} \cov_{\mu_{(i),a_i,a_i}^{(1)}} (\partial_i f(x) ,\partial_i g( \underline{x}_{i-1}, \overline{x'}_{i}))\\
 & +\sum_{i=1}^d
 \frac{1}{\var_{\mu_i}(A_i)}
 \cov_\mu(f(x),A_i(x_i)) \cov_\mu(g(x),A_i(x_i)).\\
\end{align*}
\item If moreover $f=e^{-\phi}$ and if $f$ is orthogonal to  the  functions $A_i(x_i)$, $1\leq i\leq d$, then 
\[
\Cov_{\mu}(f(x),g(x))  = \sum_{i=1}^d Z_{i,a_i f,a_i}  \Cov_{\mu_{(i),a_i f,a_i}^{(1)}}(-\partial_{i} \phi(x),\partial_i g(\underline{x}_{i-1},\overline{x'}_{i}) ).
\]
\item If  moreover $f=e^{-\phi}$ and  $g=e^{-\psi}$ and if  the measure $\mu$ is symmetric, the function $a_k$ are even and   both $f$ and $g$ are even, then 
\[
\Cov_{\mu}(f(x),g(x)) =  \sum_{i=1}^d Z_{i,a_i f,a_i g}  \Cov_{\mu_{(i),a_i f,a_ig}^{(1)}}(\partial_{i} \phi(x),\partial_i \psi(\underline{x}_{i-1},\overline{x'}_{i}) ).
\]
\end{enumerate}
\end{lem}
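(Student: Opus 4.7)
The plan is to mimic the proof of Lemma~\ref{lem:cov2-product}, inserting the weights $a_i,A_i$ in place of the constants $1,x_i$. My starting point will be the generalised Hoeffding identity of Proposition~\ref{prop:var-prod}, which expresses $\cov_\mu(f,g)$ as the sum over $i\in\{1,\dots,d\}$ of
\begin{equation*}
I_i := \iint \partial_i f(x)\, k_{\mu_i}(x_i,x_i')\, \partial_i g(\underline{x}_{i-1},\overline{x'}_i)\, dx_i\,dx_i'\, d\mu(x_{-i})\, d\mu(x'_{-i}).
\end{equation*}
For each $i$, I rewrite $I_i$ by inserting $a_i(x_i)/a_i(x_i)$ and $a_i(x_i')/a_i(x_i')$ and absorbing $a_i(x_i)k_{\mu_i}(x_i,x_i')a_i(x_i')$ into a probability density, obtaining $I_i = Z_{i,a_i,a_i}\,\E_{\mu_{(i),a_i,a_i}^{(1)}}\!\bigl[(\partial_i f/a_i)(x)\cdot(\partial_i g/a_i)(\underline{x}_{i-1},\overline{x'}_i)\bigr]$, where $Z_{i,a_i,a_i}=\var_{\mu_i}(A_i)$ by Theorem~\ref{thm:covA} applied marginally.

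For item~(1), I split this expectation under $\mu_{(i),a_i,a_i}^{(1)}$ into a covariance plus a product of means. A weighted version of Lemma~\ref{lem:cov-prod-ei}, obtained from the same one-dimensional integration-by-parts against $A_i'=a_i$ together with the symmetry $k_{\mu_i}(x_i,x_i')=k_{\mu_i}(x_i',x_i)$ and an appropriate relabelling of $(x,x')$, identifies the two marginal expectations as $\cov_\mu(f,A_i(x_i))/\var_{\mu_i}(A_i)$ and $\cov_\mu(g,A_i(x_i))/\var_{\mu_i}(A_i)$. Summing over $i$ yields item~(1).

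Items~(2) and~(3) will follow by the same template, except that the extra positive factors $f$, and then $fg$, are pushed into the kernel to form the probability measures $\mu_{(i),a_if,a_i}^{(1)}$ and $\mu_{(i),a_if,a_ig}^{(1)}$, respectively. Writing $\partial_i f = -f\,\partial_i\phi$ and $\partial_i g=-g\,\partial_i\psi$, the weighted Hoeffding identity decomposes each $I_i$ as a normaliser times a covariance plus a product of means. For item~(2), the mean of $-\partial_i\phi/a_i$ under $\mu_{(i),a_if,a_i}^{(1)}$ will again equal $\cov_\mu(f,A_i(x_i))$ up to the constant $Z_{i,a_if,a_i}$, so the product-of-means sum is killed by the orthogonality hypothesis on $f$. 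For item~(3), I invoke the parity argument behind Lemma~\ref{lem:even}: the global sign flip $(x,x')\mapsto(-x,-x')$ leaves the weighted density $a_i(x_i)\,f(x)\,k_{\mu_i}(x_i,x_i')\,g(\underline{x}_{i-1},\overline{x'}_i)\,a_i(x_i')$ invariant --- by Lemma~\ref{lem:kmu-sym}, the symmetry of $\mu$, and the evenness of $a_i,f,g$ --- while flipping the sign of $\partial_i\phi(x)$ and of $\partial_i\psi(\underline{x}_{i-1},\overline{x'}_i)$, forcing both marginal expectations to vanish.

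The hard part will be essentially bookkeeping: keeping careful track of which variables carry primes after each relabelling, and verifying that the parity and marginalisation arguments still close. The only genuinely new ingredient beyond Proposition~\ref{prop:var-prod} and Lemmas~\ref{lem:cov-prod-ei}--\ref{lem:even} is the identification of the weighted marginal expectation of $\partial_i f/a_i$ with $\cov_\mu(f,A_i(x_i))/\var_{\mu_i}(A_i)$, but this reduces to the same one-dimensional integration-by-parts against $A_i'=a_i$ that already underlies Lemma~\ref{lem:cov-prod-ei}, and I expect no real difficulty here.
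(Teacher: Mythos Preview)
Your proposal is correct and follows exactly the route the paper takes: the paper proves only item~(1) of the unweighted Lemma~\ref{lem:cov2-product} in detail and then states that ``the other points are somehow similar,'' leaving Lemma~\ref{lem:cov2-product-mod} entirely to the reader. Your argument --- start from Proposition~\ref{prop:var-prod}, insert the factors $a_i(x_i)/a_i(x_i)$ and $a_i(x_i')/a_i(x_i')$, split into covariance plus product of means, and identify the means via Lemma~\ref{lem:cov-prod-ei} (or kill them via orthogonality or via the parity argument of Lemmas~\ref{lem:kmu-sym}--\ref{lem:even}) --- is precisely what the paper intends, and your bookkeeping of the relabelling needed for the $g$-mean is accurate. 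Note, incidentally, that the covariances in the statement of Lemma~\ref{lem:cov2-product-mod} should carry the factors $\partial_i f(x)/a_i(x_i)$ and $\partial_i g(\underline{x}_{i-1},\overline{x'}_i)/a_i(x_i')$ rather than bare $\partial_i f,\partial_i g$, as is clear from how the lemma is invoked in the proofs of Theorems~\ref{thm:Hu-produit-A} and~\ref{thm:Harge-Royen-global}; your version is the correct one.
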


Since the other points are somehow similar, we only do the proof for the first item of Lemma \ref{lem:cov2-product}.
\begin{proof}[Proof for the first item of Lemma \ref{lem:cov2-product}] 
 From Proposition \ref{prop:var-prod} and Lemma \ref{lem:cov-prod-ei}, one has
\begin{align*}
 & \Cov_{\mu}(f(x),g(x))\\
= &\sum_{i=1}^d  \iint_{
x,x' \in  \R^d}  
\partial_i f(x) 
k_{\mu_i}(x_i,x_i')  
\partial_i g(\underline{x}_{i-1},\overline{x'}_{i}) 
dx_i dx_i'   d\mu(x_{-i}) d\mu(x'_{-i})\\
= & \sum_{i}\Gamma_{i,i} \Cov_{\mu_{(i)}^{(1)}}(\partial_{i}f(x),\partial_i g(\underline{x}_{i-1},\overline{x'}_{i}) )\\
&+ \sum_{i}\Gamma_{i,i}
\left(\iint\partial_{i}f(x)\frac{k_{\mu_i}^{(1)}(x_i,x_i')}{\Gamma_{i,i}} dx_i dx_i'   d\mu(x_{-i}) d\mu(x'_{-i}) \right)\\
& \hspace{1cm} \times \left(\iint\partial_{i}g (\underline{x}_{i-1},\overline{x'}_{i}) \frac{k_{\mu_i}^{(1)}(x_i,x_i')}{\Gamma_{i,i}} dx_i dx_i'   d\mu(x_{-i}) d\mu(x'_{-i})\right)\\
= & \sum_{i=1}^{d}\Gamma_{i,i}\;\Cov_{\mu_{(i)}^{(1)}}(\partial_{i}f(x),\partial_{i}g(y))+\sum_{i=1}^{d}\frac{1}{\Gamma_{i,i}}\;\Cov_{\mu}(f(x),x_{i})\,\Cov_{\mu}(g(x),x_{i}).
\end{align*}
\end{proof}

\subsection{Another proof  of Theorem \ref{thm:Hu-produit-A} and a proof of Theorem \ref{thm:Harge-Royen-global}}
Before we turn to the announced proofs, we highlight with the next statement that under our assumptions, the new probability measures on $\R^{2d}$ satisfy the Holley condition and thus the FKG inequality.

Recall that $\mu= \mu_1 \otimes \dots \otimes \mu_d$ is a product measure with marginals $\mu_k$, $k=1,\dots,d$, admitting densities, denoted by $\exp(-V_k)$, with respect to the Lebesgue measure. For some index $i\in \{1,\dots,d\}$ and for $f$ and $g$ some positive functions on $\RR^d$, the kernel $k_{(i),f,g}$ is defined on $\RR^{2d}$ by
\[
k_{(i),f,g}=f(x)k_{\mu_i}(x_i,y_i)g(x)\prod_{j\neq i}e^{-V_j(x_j)}\prod_{j\neq i}e^{-V_j(x'_j)}.
\]
The measure $\mu_{(i),f,g}^{(1)}$ has a density on $\RR^{2d}$ equal to 
\[
d\mu_{(i),f,g}^{(1)}(x,x')=\frac{ 1}{Z_{i,f,g}} k_{(i),f,g}^{(1)} dx dx',
\]
with 
\[
Z_{i,f,g}= \iint_{x,x'}  f(x)   k_{\mu_i}(x_i,x'_i) g( \underline{x}_{i-1}, \overline{x'}_{i}) dx_i dx'_i   d\mu(x_{-i}) d\mu(x'_{-i}).
\]
\begin{prop}\label{prop:fkg-FKG}
Let $\mu= \mu_1 \otimes \dots \otimes \mu_d$ be a product measure on $\R^d$ and grant the above notations. One has
\begin{enumerate}
\item For all  $1\leq i \leq d$, the measures $\mu_{(i)}^{(1)}$ and $\mu_{(i),a_i,a_i}^{(1)}$ satisfy the Holley condition \eqref{eq:k-FKG}. 
Moreover, for any choice of signs $(\ve_1, \dots, \ve_d)\in \{ +1,-1\} ^d$, the kernels   $\tilde k_{(i)}$ and $\tilde k_{(i),a_i,a_i}$ defined by 
\[
\tilde k_{(i)} (x,x')= k_{(i)}^{(1)} (\ve x, \ve x') \textrm{ and }  \tilde k_{(i),a_i,a_i} (x,x')= k_{(i),a_i,a_i} (\ve x, \ve x')
\]
with  $k_{(i),a_i,a_i} $ the density - up to the constant factor $\var_{\mu_i}(A_i)$ - of the measure  $\mu_{(i),a_i,a_i}^{(1)}$ with respect to the Lebesgue measure on $\R^{2d}$ and   
\[
(\ve x, \ve x')= (\ve_1 x_1, \dots, \ve_d x_d, \ve_1 x'_1, \dots, \ve_d x'_d),
\]
satisfy the Holley condition \eqref{eq:k-FKG}. 
\item Assume that $f=e^{-\phi}$ and  that for all $1\leq i,j \leq d$ with $i\neq j$,
\[
\partial_{i,j} \phi (x) \leq 0
\]
 then for all $1\leq i \leq d$, the measures $\mu_{(i),f}^{(1)}$ and $\mu_{(i),a_if,a_i}^{(1)}$ satisfy the Holley condition \eqref{eq:k-FKG}. 

\item Assume that   $f=e^{-\phi}$ and $g=e^{-\psi}$
and  that for all $1\leq i,j \leq d$ with $i\neq j$,
\[
\partial_{i,j} \phi (x) \leq 0 \textrm{ and }\partial_{i,j} \psi (x) \leq 0
\]
then for all  $1\leq i \leq d$, the measures $\mu_{(i),f,g}^{(1)}$ and $\mu_{(i),a_i f ,a_i g}^{(1)}$ satisfiy the Holley condition \eqref{eq:k-FKG}.
\end{enumerate}
\end{prop}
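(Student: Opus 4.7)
The plan is to reduce the proof to the elementary principle that a product of nonnegative functions on $\R^{2d}$ that individually satisfy the Holley condition \eqref{eq:k-FKG} also satisfies it, since the Holley inequalities multiply. I will verify that every factor entering the density of the measures at hand is log-supermodular on $\R^{2d}$, and conclude by multiplication.

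First, the single-variable factors $e^{-V_j(x_j)}$, $e^{-V_j(x'_j)}$ and, when present, $a_i(x_i)$, $a_i(x'_i)$ each depend on a single coordinate and therefore trivially verify Holley with equality. Next, the bivariate kernel $k_{\mu_i}(x_i,x_i')$ (respectively its weighted variant $a_i(x_i)k_{\mu_i}(x_i,x_i')a_i(x_i')$) is log-supermodular on the $(x_i,x_i')$-plane: this is exactly Corollary~\ref{cor:totpos2} (respectively Corollary~\ref{cor:totpos}), since Holley on $\R^2$ coincides with inequality \eqref{eq:totpos2} after rewriting the min/max pair as an ordered pair. A two-variable log-supermodular function, viewed as a function on $\R^{2d}$ depending only on the two relevant coordinates, remains log-supermodular on $\R^{2d}$, since the coordinate-wise lattice operations restrict to those two coordinates. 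For part (2), the factor $f(x)=e^{-\phi(x)}$ depends only on the $x$-block of coordinates; by Remark~\ref{rmk:bakry-michel}, the hypothesis $\partial_{i,j}\phi\le 0$ for $i\ne j$ is equivalent to log-supermodularity of $f$ on $\R^d$, which transfers to $\R^{2d}$ as above. For part (3), the factor $g(\underline{x}_{i-1},\overline{x'}_i)$ depends on the coordinates $x_1,\dots,x_{i-1},x'_i,\dots,x'_d$; again the lattice operations on $\R^{2d}$ restrict to the product lattice on this subset, so the hypothesis $\partial_{i,j}\psi\le 0$ for $i\ne j$ ensures log-supermodularity on $\R^{2d}$. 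Multiplying these log-supermodular factors yields Holley for each of $k_{(i)}^{(1)}$, $k_{(i),f}^{(1)}$, $k_{(i),f,g}^{(1)}$ and their $a_i$-weighted analogues, proving the three parts of the statement.

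It remains to handle the sign-flipped kernels $\tilde k_{(i)}$ and $\tilde k_{(i),a_i,a_i}$ of part (1). I argue again factor by factor: the single-variable factors remain single-variable after composition with any sign flip, hence trivially log-supermodular. For the only genuinely bivariate factor, $(x_i,x'_i)\mapsto k_{\mu_i}(\ve_i x_i,\ve_i x'_i)$ (possibly multiplied by $a_i(\ve_i x_i)a_i(\ve_i x'_i)$), the crucial observation is that the simultaneous involution $(u,v)\mapsto(-u,-v)$ intertwines $\wedge$ and $\vee$ on $\R^2$: one has $(-u,-v)\wedge(-u',-v')=-\bigl((u,v)\vee(u',v')\bigr)$ and symmetrically. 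Consequently the Holley inequality for $h$ at two points is identical to the Holley inequality for $(u,v)\mapsto h(-u,-v)$ at the flipped points, so log-supermodularity is preserved under joint negation (the case $\ve_i=+1$ being trivial). Crucially, if only one of the two coordinates were flipped, this argument would fail, but the paired flip $(x_i,x'_i)\mapsto(-x_i,-x'_i)$ is precisely what is built into the definition of $\tilde k$.

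The main subtlety lies in handling the non-smoothness of the Hoeffding kernel $k_{\mu_i}$ on the diagonal, which prevents a direct use of the differential characterization of Holley recalled in Remark~\ref{rmk:bakry-michel}; this is bypassed by invoking Corollaries~\ref{cor:totpos2} and~\ref{cor:totpos} directly, which give Holley in its multiplicative form. A secondary point requiring care is the combinatorial bookkeeping for the factor $g(\underline{x}_{i-1},\overline{x'}_i)$ in part (3), whose mix of $x$- and $x'$-coordinates is nevertheless seen to yield log-supermodularity on $\R^{2d}$ as soon as $g$ is log-supermodular on $\R^d$.
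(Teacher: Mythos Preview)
Your proof is correct and follows essentially the same approach as the paper: decompose the density as a product of factors each of which is log-supermodular on $\R^{2d}$ (equivalently, write the log-density as a sum of terms satisfying \eqref{eq:cond-Holley-H}), and conclude by multiplicativity/additivity. The paper likewise invokes the total positivity of $k_{\mu_i}$ for the bivariate factor, Remark~\ref{rmk:bakry-michel} for the $-\phi$ term, and for the sign-flipped kernels simply remarks that $k_{\mu_i}(\ve_i x_i,\ve_i x'_i)$ is still totally positive; your involution argument makes this last step more explicit, and your careful treatment of the mixed-coordinate factor $g(\underline{x}_{i-1},\overline{x'}_i)$ and of the non-smoothness of $k_{\mu_i}$ are details the paper leaves implicit.
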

Note that in the latter proposition, the signs of the second-order cross derivatives for $\phi$ and $\psi$ should be both non-positive.

\begin{proof} 
The  logarithm  $H_{(i),a_i,a_i}^{(1)}$ of the density  of $\mu_{(i),a_i,a_i}^{(1)}$ with respect to the Lebesgue measure on $\R^{2d}$ is given by
\[
H_{(i),a_i,a_i}^{(1)}(x,x')=  \ln k_{\mu_i}(x_i,x_i') + \ln  a_i(x_i)+ \ln a_i (x_i')  - \sum_{j\neq i} V_j(x_j) - \sum_{j\neq i} V_j(x_j').
\]
Since $k_{\mu_i}$  is a totally positive kernel on $\R^2$, it follows easily that $H_{(i),a_i,a_i}^{(1)}$ satisfies \eqref{eq:cond-Holley-H}.
 Now for $(\ve_1, \dots, \ve_d)\in \{ +1,-1\} ^d$ fixed, the logarithm $\tilde H_{(i),a_i,a_i}^{(1)}$ of the kernel $\tilde k_{(i),a_i,a_i}^{(1)}$ is given by:
 \[
\tilde H_{(i),a_i,a_i}^{(1)}(x,x')=  \ln k_{\mu_i}(\ve_i x_i,\ve_i x_i') + \ln  a_i(\ve_i x_i)+ \ln a_i (\ve_i x_i')  - \sum_{j\neq i} V_j(\ve_j x_j) - \sum_{j\neq i} V_j(\ve_j x_j').
\] 
Since the kernel $k_{\mu_i}(\ve_i x_i,\ve_i x_i')$  is still totally positive on $\R^2$,
the proof of the first point follows.
We turn to the proof of the second point.
The  logarithm  $H_{(i),a_i f, a_i}^{(1)}$ of the density  of $\mu_{(i),a_i f,a_i}^{(1)}$ with respect to the Lebesgue measure on $\R^{2d}$ satisfies 
\[H_{(i),a_if, a_i}^{(1)}(x,x')= -\phi(x) + H_{(i),a_i , a_i}^{(1)}. \]
From assumption \eqref{eq:cond-l2-phi-g} and  Remark \ref{rmk:bakry-michel}, the function $x\to -\phi(x)$ satisfies \eqref{eq:cond-Holley-H} on $\R^d$ and thus clearly the function $(x,x')\to -\phi(x)$ also satisfies \eqref{eq:cond-Holley-H} on $\R^{2d}$.
Finally, by summation, Inequality \eqref{eq:cond-Holley-H} is also valid on $\R^{2d}$ for $H_{(i),a_if,a_i}^{(1)}$.  
The proof for the third point is similar and we omit the details.
\end{proof}

We now provide another proof of Theorem \ref{thm:Hu-produit-A}.
\begin{proof}[Another proof of Theorem \ref{thm:Hu-produit-A}]
 Let $f$ and $g$ be two functions on $\R^d$ satisfying \eqref{eq:cond-l2-fg-mod}. 
 We first assume that all the signs of the second derivatives in Assumption \ref{eq:cond-l2-fg-mod} are non-negative. 
 By Lemma \ref{lem:cov2-product-mod}, one has 
 \begin{align*}
 & \cov_\mu(f,g)    - \sum_{i=1}^d 
 \frac{1}{\var_{\mu_i}(A_i)}
 \cov_\mu(f(x),A_i(x_i)) \cov_\mu(g(x),A_i(x_i))\\
=& \sum_{i=1}^d  {\var_{\mu_i}(A_i)} \cov_{\mu_{(i),a_i,a_i}^{(1)}} \left( \frac{\partial_i f(x)}{a_i(x_i)} , \frac{\partial_i g( \underline{x}_{i-1}, \overline{x'}_{i})}{a_i(x_i')} \right).\\
\end{align*}
Furthermore, by Proposition \ref{prop:fkg-FKG}(1), the measure $\mu_{(i),a_i,a_i}^{(1)}$, for $i\in \{1,\dots,d\}$, satisfies the Holley condition \eqref{eq:k-FKG}. By condition \eqref{eq:cond-l2-fg} both functions $(x,x')\to\partial_i f(x)/a_i(x_i)$ and $(x,x')\to\partial_i g( \underline{x}_{i-1}, \overline{x'}_{i})/a_i(x_i')$ are coordinate increasing on $\R^{2d}$ and thus, for each  $1\leq i\leq d$,
\[
 \cov_{\mu_{(i),a_i,a_i}^{(1)}} \left( \frac{\partial_i f(x)}{a_i(x_i)} , \frac{\partial_i g( \underline{x}_{i-1}, \overline{x'}_{i})}{a_i(x_i')} \right) \geq 0.
\]
Summing these  inequalities ends the proof in this specific case. 
In the general case, for any $\ve= (\ve_1, \dots, \ve_d)\in \{ +1,-1\} ^d$, by the change of variable $(\tilde x, \tilde x')= (\ve x,\ve x')$, one has
\[
 \cov_{\mu_{(i),a_i,a_i}^{(1)}} \left( \frac{\partial_i f(x)}{a_i(x_i)} , \frac{\partial_i g( \underline{x}_{i-1}, \overline{x'}_{i})}{a_i(x_i')} \right) = \cov_{\tilde \mu_{(i),a_i,a_i}^{(1)}} \left( \frac{\partial_i f( \ve x)}{a_i(\ve_i x_i)} , \frac{\partial_i g(  \underline{ \ve x}_{i-1}, \overline{ \ve x'}_{i})}{a_i( \ve_i x_i')} \right)
\]
and for each $1\leq i \leq d$, it is possible to find some vector $\ve= (\ve_1, \dots, \ve_d)\in \{ +1,-1\} ^d$ such that  $\frac{\partial_i f( \ve x)}{a_i(\ve_i x_i)}$  and  $ \frac{\partial_i g(  \underline{ \ve x}_{i-1}, \overline{ \ve x'}_{i})}{a_i( \ve_i x_i')} $ are both coordinate increasing. More precisely, it suffices to take $\ve_j=\textrm{sign } \partial_j \left( \frac{\partial_i f}{a_i} \right)$. By Lemma \ref{lem:cov2-product-mod}(1), the measures $\tilde \mu_{(i),a_i,a_i}^{(1)}$ also satisfy  the Holley condition and the result follows from the FKG inequality.
\end{proof}

We turn now to the proof of Theorem \ref{cor:Harge-Royen-global}, where we add some symmetries.


\begin{proof}[Proof of Theorem \ref{cor:Harge-Royen-global}]
 Let $f=e^{-\phi}$ and $g$ be two functions on $\R^d$ satisfying \eqref{eq:cond-ii-phi-g} and  \eqref{eq:cond-ij-phi-g} and assume that $f$ is orthogonal to the  functions $A_i$,  $1\leq i\leq d$.  By Lemma \ref{lem:cov2-product-mod}(2), 
 one has
\[
\Cov_{\mu}(f(x),g(x))  = \sum_{i=1}^d Z_{i,a_i f,a_i}  \Cov_{\mu_{(i),a_i f,a_i}^{(1)}}\left( -\frac{\partial_{i} \phi(x)}{a_i(x_i)},\frac{\partial_i g(\underline{x}_{i-1},\overline{x'}_{i})}{a_i(x_i')}\right) .
\]
Now for each $i$,  since $\phi$ satisfies \eqref{eq:cond-ij-phi-g}, by Proposition \ref{prop:fkg-FKG}(2)  the measure $\mu_{(i),a_if,a_i}^{(1)}$ satisfies the Holley condition.  Moreover  adding condition \eqref{eq:cond-l2-phi-g}  both functions $(x,x')\to \frac{\partial_{i} \phi(x)}{a_i(x_i)} $ and $(x,x')\to \frac{\partial_i g( \underline{x}_{i-1}, \overline{x'}_{i})}{a_i(x_i')}$ are  coordinate  increasing on $\R^{2d}$, and thus by the FKG inequality, for each  $1\leq i\leq d$, one has:
\[
\Cov_{\mu_{(i),a_i f,a_i}^{(1)}}\left( -\frac{\partial_{i} \phi(x)}{a_i(x_i)},\frac{\partial_i g(\underline{x}_{i-1},\overline{x'}_{i})}{a_i(x_i')}\right) \geq 0.
\]
 Theorem \ref{cor:Harge-Royen-global}(1) thus follows.
The proof of Theorem \ref{cor:Harge-Royen-global}(2)  is similar, since by Lemma \ref{lem:cov2-product-mod}(3), the symmetry assumptions made on $f=e^{-\phi}$ and $g=e^{-\psi}$, give that 
\[
\Cov_{\mu}(f(x),g(x))  = \sum_{i=1}^d Z_{i,a_i f,a_ig }  \Cov_{\mu_{(i),a_i f,a_i}^{(1)}}\left( \frac{\partial_{i} \phi(x)}{a_i(x_i)},\frac{\partial_i \psi(\underline{x}_{i-1},\overline{x'}_{i})}{a_i(x_i')}\right) .
\]
Finally, the assumptions \eqref{eq:cond-ii-phi-g} and \eqref{eq:cond-ij-phi-g} ensure  that the measure  $\mu_{i,f,g}^{(1)}$  satisfies the Holley condition and that the functions in the covariance are coordinate increasing, which gives the result.
\end{proof}

\section{Comments on the standard semi-group interpolation}
\label{sec:comments}

In this section, we explain what can be done using a standard covariance representation obtained by interpolation with  the associated diffusion semi-group (see \eqref{eq:cov-rep-interpolation} below) and why we did not follow this natural approach, but rather used instead the covariance representation of Proposition \ref{prop:var-prod}.

We consider here a probability measure $\mu=e^{-V}dx$ with a smooth
potential $V$. One can associate to it a diffusion semi-group with generator $L$ defined
for $f$ smooth with compact support by 
\[Lf=\Delta f-\nabla V\cdot\nabla f.\]
This diffusion operator is symmetric with respect to $\mu$: for $f,g\in\mathcal{C}_{c}^{\infty}(\R^{d})$,
\[
\int fLgd\mu=\int Lfgd\mu=-\int\nabla f\cdot\nabla gd\mu.
\]
Under mild conditions on $V$, one we can define alternatively the semi-group
associated to $L$ by the spectral theorem and functional calculus,
or by a stochastic representation (see \cite{bakry-gentil-ledoux} for further details), 
\[
P_{t}f(x)=e^{tL}(f)(x)=\E[f(X_{t}^{x})]
\]
for some Markov diffusion process $(X_{t}^{x})_{t\geq 0}$.
We assume  moreover that  the operator  $ -\L+\Hess V$, with $\L={\rm diag}(L,\dots,L)$ acting on gradients, is invertible. Note that this holds under some strong convexity of the potential $V$.
In this situation,  for  $f,g:\R^{d}\to\R$ satisfying some integrability conditions on $f$ and $g$,
one has
\begin{equation}
\cov_{\mu}(f,g)=\int_{\R^{d}}\nabla f(x)\cdot(-\L+\Hess V)^{-1}\nabla g(x)d\mu(x)\label{eq:cov-L+V}
\end{equation}
and thus
\[
\cov_{\mu}(f,g)=\iint_{\R^{d}\times\R^{d}}\nabla f(x)K(x,y)\nabla g(y)dxdy
\]
where $K$ is the matricial kernel (with respect to the Lebesgue measure)
of the operator $(-\L+\Hess V)^{-1}$.
Moreover, the matricial kernel $K(x,y)$ admits the following stochastic Feynman-Kac representation,
\begin{equation}
K(x,y)=e^{-V(x)}\int_{0}^{+\infty}\E[Y_{t,x}|X_{t}=y]p_{t}(x,y)e^{-V(y)}dy,\label{eq:K}
\end{equation}
where $p_{t}$ stands for the heat kernel associated to $P_{t}$ with respect
to the measure $\mu$ and $Y_{t,x}$ is the matrix satisfying the
following ordinary (random) differential equation, 
\begin{equation}
\frac{d}{dt}Y_{t,x}=- Y_{t,x} \Hess V(X_{t}^{x})\textrm{ for }t\geq0;\ Y_{0,x}=Id.\label{eq:FKY}
\end{equation}
In the case of  a product measure, we can write $V(x)=V_1(x_1)+\dots + V_d(x_d)$, for some real functions $V_k$. This gives the following generalization of Hoeffding's covariance identity,
\begin{equation}\label{eq:cov-rep-interpolation}
\Cov_{\mu}(f,g)=\sum_{i=1}^{d}\iint_{x,y\in\R^{d}}\partial_{i}f(x)\, \kappa_{i}(x,y)\,\partial_{i}g(y)dxdy,
\end{equation}
where for each $1\leq i\leq d$ , $k_{i}:\R^{2d}\to\R_+$ is the
kernel defined by 
\begin{equation}\label{eq:k1i}
\kappa_{i}(x,y)=\int_{t=0}^{\infty}\E\left[\exp\left(-\int_{0}^{t}V_{i}''(X_{s}^{x_{i},i})ds\right)|(X_{t}^{x_{i},i}=y_{i})\right]p_{t}(x,y) \, dt \, e^{-V(x)}e^{-V(y)}.
\end{equation}
This kernel also writes as
\[ 
\kappa_{i}(x,y)=\int_{t=0}^{\infty}  \kappa_{i,t} \, dt  
\]
with
\[ 
 \kappa_{i,t}(x,y):=   p_{t,i}^{V_i''}(x_i,y_i) \prod_{j=1, j\neq i}^{d}p_{t,j}(x_{j},y_{j})     e^{-V(x)}e^{-V(y)}, 
\]
where $p_{t,j}$ is the  kernel of the one dimensional diffusion semi-group with generator given by $L_j f(x_j):=f''(x_j) - V_j'(x_j) f'(x_j)$ and  where $p_{t,i}^{V_i''}$ is the kernel of the one dimensional Schrödinger semi-group, with generator given by $L_i^{V_i''} f(x_i):=L_if(x_i) + V_i''(x_i) f(x_i).$
In the case of the standard Gaussian measure, one has $p_{t,i}^{V_i''}=e^{-t}p_{t,i}$.

We highlight that we do not know whether, in dimension  $d\geq 2$,  the probability measure  with density proportional to $\kappa_{i}(x,y)$ satisfies the full FKG inequality on $\R^{2d}$.
But one can also notice, that due to the \emph{coincidence formula},  diffusion  kernels and Schrödinger kernels in
dimension one are totally positive (see Karlin \cite{karlin:book}).
As a consequence,  the kernels $\kappa_{i,t}$ satisfy the Holley condition.
And if slightly differently, one has
\[ 
\kappa_{i}(x,y)=\int_{t=0}^{\infty}  \kappa_{i,t} \, d\nu(t)   
\]
for some probability measure $\nu$ on $\R_+$, 
one can use the following  decomposition of the covariance, 
 \begin{equation}\label{eq:cov-mixture}
\cov_{\mu_{\kappa_i}}(u,v)=\int_{0}^{\infty}\cov_{\mu_{\kappa_{i,t}}}(u,v)d\nu(t)+\cov_{\nu}\left(t\to\int  u \, d\mu_{\kappa_{i,t}},t\to\int  v \,d\mu_{\kappa_{i,t}}\right)
\end{equation}
and apply it with $u(x,y):=\partial_i f(x) $ and $v(x,y):=\partial_i g(y)$.

 In view of proving Theorem \ref{thm:Hu-produit-A}, we were only able to pursue this approach when the marginals  of  $\mu_{\kappa_{i,t}}$  on $\R^d \times \R^d$ are constant for all $t>0$. In this situation,
 if moreover, $(x,y)\to u(x)$ and $(x,y)\to v(y)$,  the term related to $\cov_\nu$ appearing in the right-hand side of \ref{eq:cov-mixture} indeed vanishes and one obtains some \emph{partial FKG inequalities} for the measure $\mu_{\kappa_i}$ on $\R^{2d}$. 
Here, the terms ``partial'' means  that it is applied only to  coordinate increasing functions of the form $(x,y)\to u(x)$ and $(x,y)\to v(y)$.

This property that the marginals $\mu_{\kappa_{i,t}}$ are constant, holds for the standard Gaussian measure and this approach may be pursued, with a second order covariance representation, to recover
Theorem \ref{thm:harge}(1) for the standard Gaussian measure. Let us give some details. 

First,  the first order representation \eqref{eq:cov-rep-interpolation} is well known for the standard Gaussian measure (see \cite{bobkov-gotze-houdre}). The measures $\gamma_{\kappa_{i,t}}$ are in fact independent of $i$, they  are also Gaussian measures on $\R^{2d}$ and they have  fixed marginals on $\R^d \times \R^d$.
A second  order covariance representation for  $\gamma$ thus means   a first order covariance of the new measure(s) $\gamma_{\kappa_{i,t}}$ similar to \eqref{eq:cov-rep-interpolation}.
 It can be obtained either by a change of variable since  $\gamma_{\kappa_{i,t}}$ is still a  Gaussian measure or by solving explicitly the stochastic Feynman-Kac representation.
This method is similar to the one of \cite{hu-chaos}, except that the latter approach specifically uses the fact that the Gaussian measure is the density at time $1$ of  the classical heat semi-group, whereas instead we use here the Orstein-Uhlenbeck operator.

Finally, for general product measures, the ``constant marginal property'' also holds for the modified kernels $k_{\mu_i,(a_i,a_i)}$, with the  choice $a_i(x_i)=\frac{1}{g_i'(x_i)}$ where $g_i$ is (if it exists) the first non-trivial eigenfunction associated to $L$. This leads to Theorem \ref{thm:Hu-produit-A}, but only for this specific choice. More importantly, this constant marginal property is valid for product measures under the symmetry assumptions of Theorem \ref{thm:Harge-Royen-global}(2) and this route may also be taken to provide another proof 
Theorem \ref{thm:Harge-Royen-global}(2).

\section{Examples}\label{sec:examples}
In this final section, we provide a couple of examples of possible applications of our results. 

First let $\mu$ be a product measure on $\R^d$
and for $\beta >0$ and consider the free energy, also known in the optimization community as the ``soft max'' function, 
\[
F_\beta(x):= \frac{1}{\beta} \ln\left( \sum_{i=1}^d e^{\beta x_i} \right). 
\]
By setting $p_i:= \frac {e^{\beta x_i}} { \sum_j e^{\beta x_j} }$, it satisfies
\[
\partial_i F_\beta = p_i \geq 0,
\]
\[
\partial_{ii} F_\beta(x)= \beta p_i(1-p_i)\geq 0 , \quad \partial_{ij} F_\beta(x)= -\beta p_i p_j \leq 0,\quad i\neq j.
\]
Thus, for any $\alpha,\beta>0$, Corollary \ref{cor:hu-produit} gives
\begin{equation}\label{cov-soft-max}
\Cov_\mu(F_\alpha, F_\beta) \geq    \sum_{i=1}^d  \frac{1} {\Var(\mu_i)} \cov(F_\alpha(x),x_i) \cov(F_\beta(x),x_i).
\end{equation}
Note that when $\alpha=\beta$, inequality \eqref{cov-soft-max} turns to the following Bessel inequality, 
\[
    \Var_\mu(F_\beta) \geq \sum_{i=1}^d  \frac{1} {\Var(\mu_i)}  \cov(F_\beta(x),x_i)^2. 
\]

\

We now turn to a second example.
Let $\mu $ be a symmetric product measure on $\R^d$. Under some integrability condition, for $J\geq 0$, we consider the probability measure: 
\[
d\mu_J(x)= \frac{1}{Z_J} e^{J \sum_{i=1}^{d-1}  x_i x_{i +1} } d\mu(x), \quad Z_J= \int_{\R^d}e^{J \sum_{i=1}^{d-1}  x_i x_{i +1} } d\mu(x).
\]

Let $\theta=(\theta_1,\dots,\theta_d)\in\R^d$ with $\theta_i \geq 0$, $1\leq i \leq d$,
then by  Corollary \ref{cor:Harge-Royen-global}, one has
\begin{equation}
    \int_{\R^d} \langle x,\theta \rangle^2 d \mu_J(x) \geq  \int_{\R^d} \langle x,\theta \rangle^2 d \mu(x).
\end{equation}

\section{Appendix}
We consider here some product probability  measures on $\R^d$ whose marginals  are   mixtures of centered Gaussian variables. This class of probability measures was investigated in \cite{Eskenazis}, where the authors prove that they satisfy \eqref{eq-harge-gaussienne-niv3} and provide interesting examples. Here we show that those measures also satisfy \eqref{eq-harge-gaussienne-niv2}.

We consider Gaussian mixtures of the form  
 \begin{equation}\label{def-mixture}
\mu=\iint_{ \sigma\in (0,\infty)^d} \gamma_{\Gamma_\sigma} d\nu(\sigma),
 \end{equation}
where $\gamma_{\Gamma_\sigma}$ is  the centered Gaussian random vector in $\R^d$ with covariance matrix  $\Gamma_\sigma= diag(\sigma_1^2, \dots,\sigma_d^2)$ and where $\nu$ is also a product measure on $(0,\infty)^d$.
 
\begin{theo}\label{thm:mixture-Harge}
Let $\mu$ be  a product probability  measure on $\R^d$, whose marginals  are  mixture of centered Gaussian variables.
Then \eqref{eq-harge-gaussienne-niv2} holds.
\end{theo}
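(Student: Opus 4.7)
The plan is to exploit the scale-mixture representation of $\mu$ and reduce to Hargé's Gaussian inequality by conditioning on the scale. Write $X\sim\mu$ as $X=(\Sigma_1 Z_1,\ldots,\Sigma_d Z_d)$, where $\Sigma\sim\nu$ (a product probability measure on $(0,\infty)^d$) and $Z\sim\gamma$ (standard Gaussian on $\R^d$) are independent, so that $X\mid\Sigma=\sigma\sim\gamma_{\Gamma_\sigma}$. For each fixed $\sigma$, $f$ remains log-concave and $g$ convex, hence the non-orthogonal form of Hargé's Gaussian inequality from \cite{harge:04}, which underlies Theorem~\ref{thm:harge}(2), applies:
\[
\int fg\,d\gamma_{\Gamma_\sigma} \le h(\sigma)\int g(x+B_\sigma)\,d\gamma_{\Gamma_\sigma}(x),
\]
with $h(\sigma):=\int f\,d\gamma_{\Gamma_\sigma}$ and $B_\sigma:=\cov_{\gamma_{\Gamma_\sigma}}(f,x)/h(\sigma)\in\R^d$.

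Integrating this conditional bound against $\nu$ gives $\int fg\,d\mu \le \int g\,d\widetilde{\mu}$, where $\widetilde{\mu}:=\int h(\sigma)\,\gamma^{B_\sigma}_{\Gamma_\sigma}\,d\nu(\sigma)$ and $\gamma^{B_\sigma}_{\Gamma_\sigma}$ denotes the Gaussian $\gamma_{\Gamma_\sigma}$ translated to mean $B_\sigma$; this is a positive measure of total mass $\int f\,d\mu$. The hypothesis $\cov_\mu(f,x_i)=0$ rewrites as
\[
\int h(\sigma)\,B_{\sigma,i}\,d\nu(\sigma)=0\qquad\text{for every }1\le i\le d,
\]
so $\widetilde{\mu}$ has mean zero, matching that of $\mu$. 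The conclusion $\cov_\mu(f,g)\le 0$ therefore reduces to establishing the convex-order domination
\[
\pi := \widetilde{\mu}/{\textstyle\int f\,d\mu} \;\leq_{cx}\;\mu,
\]
i.e.\ $\int g\,d\pi\le\int g\,d\mu$ for every convex $g$.

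To obtain this domination I would apply Caffarelli's contraction theorem conditionally on $\sigma$: log-concavity of $f$ ensures that the Brenier map from $\gamma_{\Gamma_\sigma}$ to the log-concave perturbation $(f/h(\sigma))\gamma_{\Gamma_\sigma}$ is $1$-Lipschitz in the natural Gaussian metric, and hence that $(f/h(\sigma))\gamma_{\Gamma_\sigma}$ is dominated in convex order by the centered Gaussian $\gamma^{B_\sigma}_{\Gamma_\sigma}$ shifted to the correct conditional mean. The hard part will be to glue these per-$\sigma$ conditional dominations into a single convex-order coupling between $\pi$ and $\mu$ through the $\nu$-mixture: the averaged orthogonality $\int h(\sigma)B_\sigma\,d\nu=0$ is precisely what is required for the overall coupling to be centered and mean-preserving, but orchestrating the $\Sigma$-dependent Brenier maps into a coherent transport plan across the mixture -- exploiting the product structure of $\nu$ and the monotonicity of $\sigma\mapsto h(\sigma)$ on $(0,\infty)^d$ granted by the Gaussian integration-by-parts identities -- is the delicate technical point.
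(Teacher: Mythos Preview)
Your proposal tries to prove the theorem without assuming that $f$ is even, but the paper's proof (despite the terse theorem statement) explicitly opens by taking $f$ log-concave \emph{and even}. That extra hypothesis dissolves your ``delicate technical point'' entirely: evenness of $f$ combined with the symmetry of each $\gamma_{\Gamma_\sigma}$ forces $\cov_{\gamma_{\Gamma_\sigma}}(f,x)=0$ for every $\sigma$, so $B_\sigma\equiv 0$. The per-$\sigma$ Harg\'e inequality then reads simply $\cov_{\gamma_{\Gamma_\sigma}}(f,g)\le 0$, and your convex-order claim $\pi\le_{cx}\mu$ becomes $\cov_\nu\bigl(h(\sigma),\,\int g\,d\gamma_{\Gamma_\sigma}\bigr)\le 0$. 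This is exactly the paper's argument: the mixture covariance splits as
\[
\cov_\mu(f,g)=\int\cov_{\gamma_{\Gamma_\sigma}}(f,g)\,d\nu(\sigma)+\cov_\nu\Bigl(\sigma\mapsto\!\int f\,d\gamma_{\Gamma_\sigma},\ \sigma\mapsto\!\int g\,d\gamma_{\Gamma_\sigma}\Bigr),
\]
the first integrand is $\le 0$ by Harg\'e applied to each centered Gaussian, and the second term is $\le 0$ by FKG on the product measure $\nu$, once one knows (Lemma~\ref{lem:coord-increase}) that $\sigma\mapsto\int f\,d\gamma_{\Gamma_\sigma}$ is coordinatewise decreasing and $\sigma\mapsto\int g\,d\gamma_{\Gamma_\sigma}$ is coordinatewise increasing.

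Without evenness your plan has a genuine gap. The monotonicity of $h(\sigma)=\int f\,d\gamma_{\Gamma_\sigma}$ that you invoke is \emph{not} granted by Gaussian integration by parts: that identity gives $\partial_{\sigma_l}h(\sigma)=\sigma_l\int(\partial_{ll}f)(\sigma z)\,d\gamma(z)$, and $\partial_{ll}f$ has no sign for log-concave $f$. The paper obtains the decrease of $h$ only for \emph{even} quasi-concave $f$, via the layer-cake representation and a result of Eskenazis--Nayar--Tkocz; for non-even log-concave $f$ it can fail (take $d=1$ and $f(x)=e^{-(x-c)^2/2}$, for which $h(\sigma)=(\sigma^2+1)^{-1/2}e^{-c^2/(2(\sigma^2+1))}$ is not monotone once $c^2>1$). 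Moreover, your final ``gluing'' step --- orchestrating the $\sigma$-dependent Brenier maps into a single convex-order relation --- is not merely delicate but unformulated: the per-$\sigma$ domination you want from Caffarelli is already the content of the Harg\'e inequality you used in the first step, so invoking it again buys nothing toward comparing the \emph{reweighted, shifted} mixture $\int h(\sigma)\gamma^{B_\sigma}_{\Gamma_\sigma}\,d\nu$ with the \emph{unweighted, unshifted} mixture $\mu$. The centering condition $\int h(\sigma)B_\sigma\,d\nu=0$ is necessary but far from sufficient for that convex-order comparison.
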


 The proof relies on the following Lemma, the key point of which being that no symmetry assumption is required in the convex situation.
\begin{lem}\label{lem:coord-increase} The following points hold.
\begin{enumerate}

\item Let  $g$ be a  convex function on $\R^d$, then the function
\[
(\sigma_1, \dots,\sigma_d)\in (0,\infty)^d \to \int g(y) d\gamma_{\Gamma_\sigma}(y)
\]
is  coordinatewise increasing on $(0,\infty)^d$.
\item Let  $f$ be a  quasi-concave  and even function on $\R^d$, then the function
\[
(\sigma_1, \dots,\sigma_d)\in (0,\infty)^d \to \int f(y) d\gamma_{\Gamma_\sigma}(y)
\]
is    coordinatewise decreasing  on $(0,\infty)^d$.
\end{enumerate}
\end{lem}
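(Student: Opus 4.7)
The plan is to treat both parts by fixing all $\sigma_j$ for $j \neq i$ and showing monotonicity in the single remaining parameter $\sigma_i$. Since $\Gamma_\sigma$ is diagonal, $\gamma_{\Gamma_\sigma}$ factorizes as the product measure $\gamma_{\sigma_1} \otimes \cdots \otimes \gamma_{\sigma_d}$ of one-dimensional centered Gaussians, so by Fubini,
\[
\int g(y) \, d\gamma_{\Gamma_\sigma}(y) = \int \left[\int g(y_1, \ldots, y_d) \, \gamma_{\sigma_i}(dy_i)\right] \prod_{j \neq i} \gamma_{\sigma_j}(dy_j),
\]
so the problem reduces to establishing the desired monotonicity for the inner one-dimensional integral, at each fixed slice $y_{-i}=(y_j)_{j\neq i}$.

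For part (1), since $g$ is convex on $\R^d$ it is in particular convex in $y_i$ alone once the other coordinates are frozen. I would then invoke the classical fact that $\sigma \mapsto \int h \, d\gamma_\sigma$ is increasing for any convex $h$: for $0 < \sigma \leq \sigma'$, decomposing $\gamma_{\sigma'}$ as the convolution $\gamma_\sigma * \gamma_r$ with $r=\sqrt{(\sigma')^2-\sigma^2}$ and applying Jensen's inequality to the mean-zero Gaussian increment gives $\int h \, d\gamma_{\sigma'} \geq \int h \, d\gamma_\sigma$. Integrating this inequality against $\prod_{j\neq i} \gamma_{\sigma_j}(dy_j)$ preserves it and concludes.

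For part (2), after shifting $f$ by an additive constant (which does not affect monotonicity) I can assume $f \geq 0$, and then use the layer cake representation
\[
\int f \, d\gamma_{\Gamma_\sigma} = \int_0^{+\infty} \gamma_{\Gamma_\sigma}(\{f \geq t\}) \, dt
\]
to reduce the problem to showing that $\sigma \mapsto \gamma_{\Gamma_\sigma}(K)$ is coordinatewise decreasing for every symmetric ($K=-K$) convex set $K$ in $\R^d$, since the quasi-concavity and evenness of $f$ force each level set $K_t := \{f \geq t\}$ to be of this form. Fixing the $\sigma_j$ for $j\neq i$ and taking $\sigma_i \leq \sigma_i'$, I would again represent the larger Gaussian as $Y' = Y + r Z e_i$ with $Y \sim \gamma_{\Gamma_\sigma}$, $Z \sim \gamma_1$ independent, and $r=\sqrt{(\sigma_i')^2-\sigma_i^2}$, so that
\[
\P(Y' \in K) = \E_Z \bigl[ \P(Y \in K - rZ e_i) \bigr].
\]
The proof is then closed by Anderson's theorem: for any symmetric convex set $K$ and any centered random vector $Y$ with an even, quasi-concave density, the function $v \mapsto \P(Y \in K - v)$ is maximized at $v=0$. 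Applied to our centered Gaussian $Y$, this yields $\P(Y \in K - rZ e_i) \leq \P(Y \in K)$ pointwise in $Z$, hence $\gamma_{\Gamma_{\sigma'}}(K) \leq \gamma_{\Gamma_\sigma}(K)$.

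The main obstacle is part (2): unlike part (1), where convexity descends cleanly to every one-dimensional slice, the quasi-concave slice $y_i \mapsto f(y_i,y_{-i})$ is in general not even (only its reflection through $-y_{-i}$ is), so a direct one-dimensional Jensen-type argument on the inner integral is not available. The evenness of $f$ only manifests globally as symmetry of the level sets in $\R^d$, and Anderson's theorem is precisely the ingredient that converts this global symmetry into monotonicity of Gaussian measure under translations of a symmetric convex body, which is then combined with the convolution representation of $\gamma_{\Gamma_{\sigma'}}$ to control the variance increment.
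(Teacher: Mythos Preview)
Your proof is correct, and both parts take routes that differ from the paper's.

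For part (1), the paper performs the change of variables $y=\Gamma_\sigma^{1/2}x$, differentiates in $\sigma_l$, and identifies the derivative as $\cov_\gamma(x_l,\partial_l g(\sigma x))$, which it then shows is nonnegative via the Hoeffding-type covariance representation and the sign of $\partial_{ll}g$. Your argument is more elementary: you exploit the convolution identity $\gamma_{\sigma'}=\gamma_\sigma*\gamma_r$ and Jensen's inequality applied slice by slice. This avoids any differentiation or kernel representation and uses only the coordinatewise convexity of $g$, which is all that is really needed.

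For part (2), both proofs begin with the layer-cake reduction to symmetric convex level sets. The paper then simply cites \cite{Eskenazis} for the monotonicity of $\sigma\mapsto\gamma_{\Gamma_\sigma}(A_t)$, whereas you supply a self-contained proof of that fact by combining the same convolution decomposition with Anderson's inequality. This is precisely the right tool, and your argument is essentially how one would prove the cited result. One small caveat: your reduction to $f\ge 0$ by ``shifting by an additive constant'' presupposes that $f$ is bounded below, which a quasi-concave even function need not be (e.g.\ $f(x)=-\|x\|$). This is easily repaired by first replacing $f$ with $\max(f,-M)$, which remains quasi-concave and even, and then letting $M\to\infty$.
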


\begin{proof}[Proof of Theorem \ref{thm:mixture-Harge}]
Let $f$ be a log-concave and even function and let $g$ be a convex function.
Using the decomposition of the covariance  \eqref{eq:cov-mixture}, one has
\begin{align*}
\cov_\mu (f,g)&= \iint_{(0,\infty)^d} \cov_{\gamma_{\Gamma_\sigma}}(f,g) d\nu(\sigma) \\
 & \quad + \cov_\nu \left( \sigma\in (0,\infty)^d \to \int f d\gamma_{\Gamma_\sigma} , \sigma\in (0,\infty)^d \to \int g d\gamma_{\Gamma_\sigma}
\right).
\end{align*}
The rest of the proof consists in showing that the two terms in the right-hand side of the latter inequality are non-positive.
Firstly,  Hargé's result \eqref{eq-harge-gaussienne-niv2} also  applies to any (centered) Gaussian distribution (see \cite{harge}) and thus $\cov_{\gamma_{\Gamma_\sigma}}(f,g)\leq 0$. 
Secondly, we use Lemma \ref{lem:coord-increase}, since $f$ is log-concave and even, it is also quasi-concave and even, and thus the two functions 
\[
\sigma\in (0,\infty)^d \to \int g d\gamma_{\Gamma_\sigma} \textrm{ and }\sigma\in (0,\infty)^d \to \int f d\gamma_{\Gamma_\sigma}
\]
are respectively coordinatewise increasing and coordinatewise decreasing on $(0,\infty)^2$. The measure $\nu$ being a product measure, by the FKG inequality for product measure, the term  $\cov_\nu (\cdot,\cdot)$ is non-positive and the result follows.
\end{proof}
We turn now to the proof of Lemma \ref{lem:coord-increase}.
\begin{proof}[Proof of Lemma \ref{lem:coord-increase}]
Let $g$ be a convex function  on $\R^d$. By a change of variable, one directly has
\[
\int g(y) d\gamma_{\Gamma_\sigma}(y)= \int_{\R^d} g(\sigma_1 x_1, \dots,\sigma_d x_d)  d\gamma(x),
\]
where we recall that $\gamma$ is the standard Gaussian distribution. To  prove the desired  property, we compute for $1\leq l\leq d$,
\begin{align*}
\frac{\partial}{\partial \sigma_l}  \int_{\R^d} g(\sigma_1 x_1, \dots,\sigma_d x_d)  d\gamma(x)
&=  \int_{\R^d}  x_l \, \partial_l g(\sigma_1 x_1, \dots,\sigma_d x_d)  d\gamma(x)\\
& = \cov_{\gamma} ( x_l, \partial_l g(\sigma_1 x_1, \dots,\sigma_d x_d)).
\end{align*}
Furthermore, by the covariance representation \eqref{eq:cov-rep-interpolation} for the standard Gaussian measure, one has
\begin{align*}
\cov_{\gamma} ( x_l, \partial_l g(\sigma_1 x_1, \dots,\sigma_d x_d))
= \iint_{x,y\in \R^d}  \kappa(x,y) \sigma_l  \partial_{ll} g(\sigma_1 y_1, \dots,\sigma_d y_d) dx dy
\end{align*}
and this quantity is non-negative 
since $g$ is convex and  $\kappa(x,y)\geq 0$. The result follows.\\
For $f$ quasi-concave and even, we use the layer cake representation of $f$:
\[
    f(x)= \int_0^\infty \bone_{A_t}(x) dt \textrm{ and }  A_t:= \{x\in \R, f(x)\geq t \}.
\]
Here by assumption the $A_t$ are convex and even.
Since by Fubini, 
\[
\int_{\R^d} f(\sigma_1 x_1, \dots,\sigma_d x_d)  d\gamma(x)
= \int_0 ^\infty \int_{\R^d}  \mathbf 1_{A_t} (\sigma_1 x_1, \dots,\sigma_d x_d)  d\gamma(x) dt.
\]
the result follows  from \cite{Eskenazis} where the authors prove the following property: for each $t\geq 0$,  
\[
(\sigma_1, \dots,\sigma_d)\in (0,\infty)^d \to \int_{\R^d} \mathbf 1_{A_t} (\sigma_1 x_1, \dots,\sigma_d x_d)  d\gamma(x)
\]
is coordinatewise decreasing.
\end{proof}

\bibliographystyle{alpha}
\bibliography{BHS,chern}


\end{document}